\documentclass[a4paper,10pt]{amsart}
\usepackage{lineno}
\usepackage[utf8]{inputenc}
\usepackage{babel} 
\usepackage{lmodern}
\usepackage{xcolor}
\usepackage{amsmath}
\usepackage{amssymb}  
\usepackage{amsthm}
\usepackage{stmaryrd}
\usepackage[hidelinks, colorlinks=false]{hyperref}
\usepackage[noabbrev,capitalise,nameinlink]{cleveref}
\usepackage{cancel}

\newtheorem{theorem}{Theorem}[section]
\newtheorem*{question*}{Question}
\newtheorem{proposition}[theorem]{Proposition}
\newtheorem{lemma}[theorem]{Lemma}
\newtheorem{corollary}[theorem]{Corollary}

\newtheorem{thmABC}{Theorem}

\newtheorem{proABC}[thmABC]{Proposition}
\newtheorem{corABC}[thmABC]{Corollary}

\theoremstyle{definition}

\newtheorem{remark}[theorem]{Remark}
\newtheorem*{notation}{Notation}

\newtheorem*{conjecture*}{Conjecture}

\newcommand{\N}{\mathbb{N}} 
\newcommand{\Z}{\mathbb{Z}}   
\newcommand{\x}{\bar{x}} 
\newcommand{\vphi}{\varphi}

\title[Engel probability in wreath products of $p$-groups]{Engel probability in wreath products of $p$-groups}

\newcommand{\commatteo}[1]{\footnotemark\marginpar{\color{blue} \tiny \footnotemark[\arabic{footnote}] #1}}

\newcommand{\noshow}[1]{}

\author[de las Heras]{Iker de las Heras}
\address{I.\ de las Heras: Matematika Saila, Euskal Herriko Unibertsitatea, 48080 Leioa, Spain}
\email{iker.delasheras@ehu.eus}
\author[Toti]{Tommaso Toti}
\address{T.\ Toti: Dipartimento di Matematica e Applicazioni, Università degli Studi di Milano-Bicocca, Via Roberto Cozzi, 55 - 20126, Milano, Italy \newline
	\indent Matematika Saila, Euskal Herriko Unibertsitatea, 48080 Leioa, Spain}
\email{tommaso.toti@gmail.com}
\author[Vannacci]{Matteo Vannacci}
\address{M.\ Vannacci: Dipartimento di Matematica `Ulisse Dini', Universit\`a degli Studi di Firenze, Viale Morgagni 67/A, 50134 Firenze, Italy}
\email{matteo.vannacci@unifi.it}

\subjclass[2020]{Primary 20E18; Secondary 20P05}
\keywords{pro-$p$ groups, Engel identities, probability}

\begin{document}
	\begin{abstract}
        We give upper and lower bounds for the number of solutions of the equation $e_n(x,y) = g$ in the group $W_k=(C_p\wr C_{p^k})^2$, where $e_n(x,y)$ is the $n$-th Engel word and $g\in W_k$. We obtain several corollaries from this.
        First, we prove a stronger version of the Amit-Ashurst conjecture for Engel words in $W_k$. 
	    We also prove that Engel words are not probabilistic identities in profinite groups with arbitrarily large wreath product quotients $W_k$.
        To conclude, we construct closed subsets of $(C_p\wr\Z_p)^2$ with positive Haar measure, empty-interior, and which are the preimage of an Engel word map.
    \end{abstract}
    \maketitle
    
	\section{Introduction}
	
	\subsection{State of the art}
	
	%The study of identities in group theory has a long and fruitful history. In this article we study the probability that elements in certain finite $p$-groups (and pro-$p$ groups) satisfy an Engel word.
	
	Let $\Gamma$ be a group and let $w\in F_r$ be a non-trivial word in the free abstract group of rank $r$. One says that $w$ is an identity on $\Gamma$ if for all $g_1,\ldots,g_r \in \Gamma$ one has $w(g_1,\ldots,g_r)=1$.
	The study of identities in groups has a long tradition. For example, the restricted Burnside problem, which was one of the central problems in Algebra of the twentieth century, was centered around torsion identities and its solution involves Engel identities \cite{Z,WZ}. 
    %already W.~Burnside raised the question whether a finitely generated group of finite exponent $n$ is necessarily a finite group. 
    Several interesting classes of groups are defined by imposing an identity, including abelian, nilpotent, soluble groups, and groups of finite exponent. 
	
	Over the past decades, probabilistic methods in group theory have been used quite successfully (see, e.g., \cite{G73} and \cite{Ne89}). In particular, some difficult group-theoretical results --like the Magnus problem-- have been reproved with a much shorter probabilistic proof (see \cite{DPSS}). One natural way to introduce probability on a profinite group $G$ is to endow the group $G$ with the Haar probability measure $\mu$ making $(G,\mathcal{A},\mu)$ a probability space, where
	$\mathcal{A}$ denotes the Borel $\sigma$-algebra of $G$. For a fix $g\in G$ and a word $w$ in $k$ letters, we will denote by $P(G,w,g)$ the Haar measure of the fiber $w^{-1}(g)$ in $G^k$, and we will just write $P(G,w)$ for $P(G,w,1)$. One says that $w$ is a \emph{probabilistic identity} for $G$ if the probability $P(G,w)$ that the word $w$ is satisfied in $G$ is positive.
	
	The interplay between identities and pro\-ba\-bi\-li\-stic identities has been investigated quite intensively. In 2018 Larsen and Shalev proved that groups with infinitely many non-isomorphic non-abelian upper composition factors cannot satisfy a pro\-ba\-bi\-li\-stic identity \cite[Thm.~1.5]{LS18}; thus, one may restrict their attention to groups with restricted composition factors, e.g.\ pro-$p$ groups. %Their result also implies that finitely generated linear groups satisfy a \emph{probabilistic Tits alternative}: they are either virtually soluble or randomly free. 
	Other works investigating the relation between identities and probabilistic identities include \cite{WZ}, \cite{CM07}, \cite{BG} \cite{Ni} and \cite{LP00}.
	
	Already in 2000 Aner Shalev conjectured that a finitely generated pro-$p$ group $G$ satisfying some probabilistic identity $w$ must satisfy some identity $\tilde{w}$ (\cite[pg.~7]{Sh00}). Recently, the conjecture was proved for the class of $p$-adic analytic pro-$p$ groups. Indeed, in \cite{KOTVW}, the authors prove that a $p$-adic analytic pro-$p$ group is either virtually-soluble or it does not satisfy any probabilistic identity. On the other hand, the problem is wide open for pro-$p$ groups that are not $p$-adic analytic.
    
    By a famous criterion of Shalev \cite{Sha92}, a pro-$p$ group $G$ is $p$-adic analytic if and only if there is a $k_0\ge 0$ such that $G$ does not involve the wreath products $W_k:=C_p\wr C_{p^k}$ as closed section for all $k \ge k_0$. In light of the previous discussion, it becomes relevant to study the probability of satisfying a certain identity in the aforementioned finite $p$-groups $W_k$. 
	
	At the same time, the wreath products $W_k$ have played a central role in other investigations in recent years. For instance, regarding the theory of Hausdorff dimension, the wreath products $W_k$ are used in \cite{KT19}, \cite{dlHK22}, \cite{dlHT22-1} and \cite{dlHT22-2} to construct finitely generated pro-$p$ groups with pathological Hausdorff spectra.
    
    In the present article we study the asymptotic behaviour of the probability $P(W_k,e_n)$ for $k\to\infty$, where $e_n(x,y)$ is the $n$-th Engel word (see Section \ref{sec:prelim} for the definition of Engel word).
	
	\subsection{Results}
	
	Throughout, let $p$ be a prime. 
    For an element $h \in W_k$, $h\neq 1$, we denote by $\mathrm{deg}(h)$ the degree of $h$ with respect to the lower central series of $W_k$, that is, $\mathrm{deg}(h)=i$ if $h\in\gamma_i(W_k)\smallsetminus\gamma_{i+1}(W_k)$. Moreover, let $\gamma_\infty(W_k):=\cap_{i\geq 1}\gamma_i(W_k)$, and note that $\gamma_\infty(W_k)=1$ since $W_k$ is a nilpotent group. We use the convention $\deg(1)=\infty$. For $h\neq 1$, define $l(h)=\lceil\mathrm{log}_p(\mathrm{deg}(h))\rceil$, and note that $l(h)=l$ if and only if $h\in\gamma_{p^{l-1}+1}(W_k)\smallsetminus\gamma_{p^l+1}(W_k)$. In particular, $h\notin\gamma_{p^{l(h)}+1}(W_k)$. We also use the convention $l(1)=k$. Moreover, let $\delta_0(m)$ be the characteristic function of $0$: $\delta(0)=1$ and $\delta_0(m)=0$ for every $m\in\N$. Finally, we denote by $\delta_1(g)$ the characteristic function of $1$: $\delta_1(1)=1$ and $\delta_1(g)=0$ for $1\neq g\in W_k$.

    As will become clear later in this article, $W_k$ is nilpotent of class $p^k$ and $\mathrm{Im}(e_n)=\gamma_{n+1}(W_k)$ for all $n\in\N$. In particular, $W_k$ satisfies the Engel word $e_{p^m}$ for every $m\geq k$, that is, 
	\[ m\geq k \Rightarrow P(W_k,e_{p^m},g)=\begin{cases}
	   	1 & g=1,\\
	   	0 & g\neq 1.
	\end{cases} \] 
	Moreover, if $l(g)\leq m$, then $g\notin\gamma_{p^m+1}(W_k)$, and hence the fiber $e_{p^m}^{-1}(g)$ is empty, that is, 
    \[ l(g)\leq m \Rightarrow P(W_k,e_{p^m},g)=0. \]
    The main result of the article is to estimate the probability $P(W_k,e_{p^m},g)$ for the remaining cases.
    
	\begin{thmABC}\label{thm:engelbound_gen}
		Let $n=p^m$ with $0\le m\leq k-1$. Then 
      \begin{multline*}
     \frac{\delta_1(g)}{p^{(1+\delta_0(m))k}}+\left(1+\delta_0(m)\frac{2}{p}\right)n^2 (p-1)^2 \frac{p^{p^{l-1}-2l}}{p^{p^k}}\leq
 P(W_k,e_n,g)\\
 \leq  4\delta_1(g)\frac{n}{p^{(1+\delta_0(m))k}} + 4 n^2 (p-1)^2 \frac{(8k+4) p^{p^{l-\delta_0(m)}-2l}}{p^{p^k}}.
 \end{multline*} 
%        \[ \frac{\bar{c}(n,p)}{p^{(1+\delta_0(m))k}}\leq P(W_k,e_n)\leq \frac{\bar{C}(n,p)k}{p^{(1+\delta_0(m))k}} \]
%		for some constants $\bar{C}(n,p)\geq\bar{c}(n,p)\geq 1$\comment{Is the bar necessary?}.
%        Moreover, let $g\in W_k$, $g\neq 1$, be such that $l=l(g)\geq m+1$. Then
%		\[ \frac{c(n,p)}{p^{p^k}}\biggl(\frac{p^{p^{l-1+\delta_0(m)}}}{p^{3l}}\biggr)\leq P(W_k,e_n,g)\leq  \frac{C(n,p)k}{p^{p^k}}\biggl(\frac{p^{p^{l-\delta_0(m)}}+p^{p^{l-1}}}{p^l}\biggr),\] 
%        where $c(n,p)=n^3(p-1)^{(3-\delta_0(m))}$ and $C(n,p)=n(p-1)$.
	\end{thmABC}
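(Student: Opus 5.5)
The plan is to reduce $e_n(x,y)=g$ to a linear equation over a truncated polynomial ring, and then count solutions class by class according to the $p$-adic valuations of the exponents. I work in a single factor $C_p\wr C_{p^k}$; if $W_k$ denotes the square, the same argument runs coordinatewise. Write the factor as $A\rtimes\langle\sigma\rangle$, with base group $A\cong\mathbb{F}_p[\sigma]/(\sigma^{p^k}-1)=\mathbb{F}_p[u]/(u^{p^k})$, where $u=\sigma-1$. Then $\gamma_{i}=u^{i-1}A$ for $i\geq 2$, so $\deg(h)=v_u(h)+1$ for $1\ne h\in A$. Write $x=a\sigma^i$ and $y=b\sigma^j$ with $a,b\in A$; choosing $(x,y)$ uniformly means choosing $a,b,i,j$ uniformly.

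\textbf{Step 1 (closed formula for $e_n$).} Since $A$ is abelian and $\sigma^j$ acts on $A$ by multiplication, I get additively
\[
[x,y]=\epsilon\cdot\bigl(a(\sigma^j-1)-b(\sigma^i-1)\bigr)
\]
for a unit $\epsilon$ of $\mathbb{F}_p[u]/(u^{p^k})$. Every further commutation with $y$ multiplies by $\sigma^j-1$, so
\[
e_n(x,y)=\epsilon\,c\,(\sigma^j-1)^{n-1},\qquad c=a(\sigma^j-1)-b(\sigma^i-1).
\]
Writing $j=p^sj'$ with $p\nmid j'$ (and $s=k$ if $j=0$), I have $\sigma^j-1=u^{p^s}\cdot(\text{unit})$. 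Hence $e_n(x,y)=c'\,u^{p^s(n-1)}$, where $c'$ is $c$ up to a unit.

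\textbf{Step 2 (distribution of $c$).} For fixed $i,j$, the map $(a,b)\mapsto c$ is $\mathbb{F}_p$-linear with image the ideal $u^{p^{t}}A$, where $t=\min(v_p(i),v_p(j))$. So $c$ is uniform on an ideal of size $p^{p^k-p^t}$. The equation $c\,u^{p^s(n-1)}=g$ then has the following number of solutions $c$ in that ideal:
\begin{itemize}
\item none, unless $v_u(g)\ge p^t+p^s(n-1)$;
\item $p^{p^s(n-1)}$ solutions if $g\neq1$ and the valuation condition holds (the kernel of multiplication by $u^{p^s(n-1)}$ on the ideal);
\item when $g=1$, the size of the full preimage, which is the whole ideal as soon as $p^s n\ge p^k$, i.e. $s\ge k-m$.
\end{itemize}
Now $v_u(g)=\deg(g)-1$ and $\deg(g)\le p^l$. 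With $n=p^m$, solvability for $g\ne 1$ forces roughly $p^{s+m}<p^l$, i.e. $s\le l-1-m$. The number of $j$ with $v_p(j)=s$ is $(p-1)p^{k-s-1}$, and likewise for $i$.

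\textbf{Step 3 (summing).} The probability is
\[
P(W_k,e_n,g)=p^{-2k}\sum_{i,j}\Pr_{a,b}\bigl(c\,u^{p^s(n-1)}=g\bigr).
\]
The pairs with $s\ge k-m$ make $e_n$ vanish identically. There are about $p^{2k}\cdot p^{-(k-m)}\cdot(\text{const})$ such pairs, which gives the $\delta_1(g)\,n/p^{k}$ term.

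When $m=0$ the exponent $n-1$ is $0$. Then the choice $j=0$ (so $y\in A$) also contributes, because $[x,b]$ then depends only on $a$ through $i$. This produces the extra factor $p^{-k}$ in $\delta_1(g)/p^{2k}$ and the extra shift $p^{l-\delta_0(m)}$.

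For $g\ne1$ the dominant contribution comes from the largest admissible $s$, roughly $s=l-1-m$. Its weight is
\[
p^{-2k}\cdot(p-1)^2p^{2k-2s-2}\cdot p^{p^s(n-1)}\cdot p^{p^t}\,/\,p^{p^k}.
\]
Substituting $p^{s}n\approx p^{l-1}$ and $p^{-2s}=n^2p^{-2l+2}$ yields exactly the lower bound $n^2(p-1)^2p^{p^{l-1}-2l}/p^{p^k}$, where the factor $(1+2/p)$ for $m=0$ again comes from the $j=0$ or $i=0$ classes. For the upper bound I sum over all admissible pairs $(s,t)$ with $s,t\le l$, bounding each term by the top one. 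The number of valuation classes, at most $2k+1$ per coordinate after accounting for the edge cases, gives the crude factor $(8k+4)$, and the constant $4$ absorbs the geometric tails.

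\textbf{Main obstacle.} The hard part is Step 3: the bookkeeping of the boundary valuation cases, namely $i=0$ or $j=0$, $t>s$ versus $t\le s$, and $g$ lying exactly at the threshold degree. Getting the precise exponents $p^{l-1}$ in the lower bound and $p^{l-\delta_0(m)}$ in the upper bound requires matching $\deg(g)\in(p^{l-1},p^l]$ against $p^t+p^s(n-1)$. I would first verify the formula of Step 1 carefully, including the unit $\epsilon$, and then treat $m=0$ separately.
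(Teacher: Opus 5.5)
Your Steps 1 and 2 are the paper's argument rewritten in $\mathbb{F}_p[u]/(u^{p^k})$. Step 1 is the decomposition $e_n(w^{t_1}x_1,w^{t_2}x_2)=[x_1,_nw^{t_2}][x_2,w^{t_1},_{n-1}w^{t_2}]^{-1}$. Step 2 is the kernel/image count for $\vphi_t$: for fixed exponents, $e_n=g$ holds with probability $p^{\min\{(n-1)p^s+p^t,\,p^k\}-p^k}$ if $\deg(g)\ge (n-1)p^s+p^t+1$, and with probability $0$ otherwise. (A minor slip: $[a\sigma^i,b\sigma^j]=a\sigma^i(\sigma^j-1)-b\sigma^j(\sigma^i-1)$ is not a single unit times $a(\sigma^j-1)-b(\sigma^i-1)$; the bijective substitution $a'=a\sigma^i$, $b'=b\sigma^j$ fixes this.) However, the statement is an explicit two-sided inequality, and all of its content lies in Step 3. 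You only sketch Step 3, and several of the concrete claims you make there are wrong, so the proposal does not yet prove the theorem.

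First, the pairs on which $e_n$ vanishes identically are those with $(n-1)p^s+p^t\ge p^k$, not those with $np^s\ge p^k$.
For $m\ge 1$ this means $s\ge k-m+1$, or $s=k-m$ and $v_p(i)\ge k-m$. Pairs with $s=k-m$ and $v_p(i)<k-m$ have probability only $p^{p^{v_p(i)}-p^{k-m}}$.
For $m=0$ it means $i=j=0$ only. Taken literally, your rule would make all $p^k$ pairs with $j=0$ vanish, giving $P(W_k,e_1,1)\ge p^{-k}$. That contradicts the claimed upper bound $4p^{-2k}+O(k\,p^{p^{k-1}-p^k-2k})$ for large $k$. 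Your remark that for $m=0$ the choice $j=0$ "also contributes" points the wrong way: the term $\delta_1(g)p^{-2k}$ arises because for $m=0$ there are fewer vanishing pairs.

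Second, the shift $p^{l-\delta_0(m)}$ in the upper bound is forced by the case $m\ge 1$, not by $m=0$. It comes from the boundary class $v_p(j)=l-m$, $v_p(i)<l-m$, which your claim $s\le l-1-m$ discards and which you never estimate. This class is non-empty exactly when $\deg(g)\ge p^l-p^{l-m}+p^{v_p(i)}+1$ (e.g.\ when $\deg(g)=p^l$). It then contributes terms $p^{p^l-(p-1)p^{l-m-1}-p^k}$, whose exponent exceeds $p^{l-1}$ by $(p-1)p^{l-m-1}(p^m-1)$, so no bound of the form $Ck\,p^{p^{l-1}}$ can cover it. For $m=0$, solvability depends on $t$ alone, and $p^{l-1}$ suffices.

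Third, for $m=0$ the factor $1+2/p$ does not come from the classes $i=0$ or $j=0$, which contribute only a relative $O(p^{l-k})$. It comes from all pairs with $\min(v_p(i),v_p(j))=l-1$: there are $(p^2-1)p^{2(k-l)}$ of them, and $p^2-1\ge(1+2/p)(p-1)^2$.

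Finally, bounding every class by the top term, with up to $2k+1$ classes per coordinate, gives a factor of order $k^2$, not $8k+4$. The linear factor needs, as in the paper, a hyperexponential estimate such as $\sum_{i\le N}p^{p^{i+M}-2i}\le 4p^{p^{N+M}-2N}$ in one valuation, with only the other index bounded trivially by $k$.

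What is missing is exactly this bookkeeping. You need to split the sum over the classes $T_j$, with $|T_j|=(1-1/p)p^{k-j}$, into three parts: the identically vanishing classes, the generic classes $v_p(j)\le l-m-1$, and the boundary class $v_p(j)=l-m$. Each part then has to be estimated separately.
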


	When $g=1$ it is not difficult to extend \cref{thm:engelbound_gen} to every $n\geq 1$. Indeed, let $m=\lfloor\mathrm{log}_p(n)\rfloor$ and $M=\lceil\mathrm{log}_p(n)\rceil$. Since $p^m\leq n\leq p^M$, we have $e_{p^m}^{-1}(1)\subseteq e_n^{-1}(1)\subseteq e_{p^M}^{-1}(1)$, and hence \[ P(W_k,e_{p^m})\leq P(W_k,e_n)\leq P(W_k,e_{p^M}). \] 
    
    %On the other hand, it is not clear if the previous result can be extended to every $n\geq 1$ for $g\neq 1$.
    %\comment{Really?}\comtommaso{If $g\neq 1$, $e_{p^m}^{-1}(g)\subseteq e_n^{-1}(g)\subseteq e_{p^M}^{-1}(g)$ with $p^m\le n\leq p^M$ does not hold}

    Given a word $w$ and a finite group $G$, one might expect that the fiber $w^{-1}(1_G)$ of the identity is always the largest among all fibers. This is not true in general; for instance, for the word $w(x)=x^2$ on the quaternion group $Q_8$ of order 8 and the  non-trivial central element $z$ of $Q_8$, we have that $P(Q_8,x^2,z) = \frac{3}{4}$ and $P(Q_8,x^2) = \frac{1}{4}$. %On the other hand, when this happens it is useful because we automatically can obtain an analogous result to Theorem~\ref{thm:engelbound} for any fiber. 
    Along the way to proving \cref{thm:engelbound_gen}, it can also be deduced that the fiber $e_{n}^{-1}(1)$ is the largest among all fibers. More precisely, we prove the following.

	\begin{proABC}\label{prop:biggest_fiber}
		Let $n\in\N$ and let $g_1,g_2\in W_k$ be such that $\mathrm{deg}(g_1)\leq\mathrm{deg}(g_2)$. Then \[ P(W_k,e_n,g_1)\leq P(W_k,e_n,g_2). \] In particular, $P(W_k,e_n,g)\leq P(W_k,e_n)$ for every $g\in W_k$.
	\end{proABC}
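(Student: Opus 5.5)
The plan is to fix the top parts of $x$ and $y$ in the base group $C_{p^k}$. Once these are fixed, the Engel word $e_n$ becomes an $\mathbb{F}_p$-linear map in the base-group coordinates. Fibers of a linear map are either empty or cosets of its kernel, so for each choice of top parts only two things matter: the kernel size, which does not depend on $g$, and whether $g$ lies in the image. I then sum over the choices of top parts.

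Write $W_k=A\rtimes C$ with $C=\langle c\rangle\cong C_{p^k}$ and $A=\mathbb{F}_p[t]/(t^{p^k}-1)$, where $c$ acts on $A$ by multiplication by $t$. Set $X=t-1$, so that $A=\mathbb{F}_p[X]/(X^{p^k})$ is a uniserial chain ring whose only ideals are $X^dA$. I would first record the standard facts $\gamma_i(W_k)=X^{i-1}A$ for $i\geq 2$ and $\gamma_1(W_k)=W_k$. Hence, for $g\in A$, we have $\deg(g)=d+1$ exactly when $g\in X^dA\smallsetminus X^{d+1}A$.

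Next, write $x=ac^i$ and $y=bc^j$ with $a,b\in A$. Since $W_k/A$ is abelian, $[x,y]\in A$, and a direct computation in additive notation gives
\[ [x,y]=u_{i,j}\bigl(a(t^j-1)-b(t^i-1)\bigr), \]
where $u_{i,j}$ is a unit of $A$ depending only on $i$ and $j$; it is a power of $t$ coming from the commutator convention. Each further commutation with $y$ acts on $A$ as multiplication by $t^j-1$, up to a unit. Therefore
\[ e_n(x,y)=v_{i,j}\bigl(a(t^j-1)-b(t^i-1)\bigr)(t^j-1)^{n-1} \]
for some unit $v_{i,j}$. This is the step I expect to need the most care: I must check the commutator conventions so that, for fixed $(i,j)$, the map $L_{i,j}\colon A^2\to A$, $(a,b)\mapsto e_n(ac^i,bc^j)$, really is $\mathbb{F}_p$-linear and not merely affine with a $g$-dependent shift.

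Granting this, $L_{i,j}$ is in fact $A$-linear, so its image is an ideal of $A$, namely the ideal generated by $(t^j-1)^n$ and $(t^i-1)(t^j-1)^{n-1}$. Hence $\mathrm{Im}(L_{i,j})=X^{d_{i,j}}A$ for some $d_{i,j}$. It follows that
\[ |e_n^{-1}(g)|=\sum_{(i,j)}|\ker L_{i,j}|\cdot\mathbf{1}\bigl[g\in X^{d_{i,j}}A\bigr]. \]
Elements $g\notin A$ have $\deg(g)=1$; they are never values of $e_n$ for $n\geq 1$, so they contribute probability $0$. Likewise, if $\deg(g_1)=1$ the inequality is trivial.

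Now suppose $\deg(g_1)\leq\deg(g_2)$ with $g_1\in A$. Then $g_1\in X^{d}A$ implies $g_2\in X^{d}A$, and the convention $\deg(1)=\infty$ covers $g_2=1$. So every summand for $g_1$ is at most the corresponding summand for $g_2$, which gives $P(W_k,e_n,g_1)\leq P(W_k,e_n,g_2)$. The final claim follows by taking $g_2=1$.
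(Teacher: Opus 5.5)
Your argument is correct and is essentially the paper's proof. You fix the top components, use \cref{lemma:decomposition} to make $e_n$ a homomorphism in the base coordinates (so each fiber is empty or a coset of the kernel), observe that the image is a term of the lower central series, and sum the resulting degree-monotone counts termwise, exactly as in \cref{lemma:countingSg} and the proof of \cref{prop:biggest_fiber}. The only differences are that you get ``the image is some $\gamma_d(W_k)$'' from the chain-ring structure of $\mathbb{F}_p[X]/(X^{p^k})$ rather than from \cref{pr:degree}, and that your single-unit formula for $[x,y]$ holds for the convention $cac^{-1}=ta$; with $c^{-1}ac=ta$ one gets $t^i(t^j-1)a-t^j(t^i-1)b$ instead, which is equally harmless for linearity and for the image ideal.
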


	%The groups $W_k$ fit into an inverse system and their inverse limit is the group $W=C_p \wr \mathbb{Z}_p$. Our main application of \cref{thm:engelbound} is that Engel words cannot be probabilistic identities in pro-$p$ group $W$.

        To boot, the information obtained during the proof of Theorem~\ref{thm:engelbound_gen} is so precise that we can confirm the Amit-Ashurst conjecture for certain Engel words. In 2012 Ashurst \cite{Ash} conjectured that the size of a generic fiber for any word map in a finite $p$-group $G$ is greater than $1/\lvert G \rvert$; that is, for a word in $k$ letters, $\lvert w^{-1}(g) \rvert \ge \lvert G \rvert^{k-1}$. In terms of probability, this conjecture can be restated as $P(G,w,g)\geq 1/|G|$, for any $g$ in the image of $w$. Given $H\leq G$, it is clear that, if $P(G,w,g)\geq 1/|H|$ for any $g$ in the image of $w$, then the Amit-Ashurst conjecture holds for the group $G$ and the word $w$.

        %In the following corollary, we prove a stronger version of the Amit-Ashurst conjecture for the Engel word $e_{p^m}$ in the groups $W_k$.

		%\begin{corABC}\label{thm:amit_ashurts}
		%Let $n=p^m$, and let $g\in\mathrm{Im}(e_n)$.
        %Then 
	   % \[ P(W_k,e_n,g)\geq \frac{1}{|B_k|}=\frac{p^k}{|W_k|},\]
		%where $B_k$ is the base group of $W_k$. In particular, $W_k$ satisfies the Amit-Ashurst conjecture for the Engel word $e_{p^m}$. 
		%\end{corABC}
		%This easily follows from the proof of Theorem~\ref{thm:engelbound_gen} (see in particular Lemma~\ref{lemma:upperboundsS_ig}). 

         We prove the following result (we use the convention $\mathbb{N}_0 = \mathbb{N} \cup \{0\}$).
        
        \begin{corABC}\label{thm:amit_ashurts_gen}
        Let $n=p^m$, with $m\in\N_0$, and let $g\in\mathrm{Im}(e_n)$.
        Then 
        \[ P(W_k,e_n,g)\geq \left(1+\delta_0(m)\frac{2}{p}\right)\biggl(1-\frac{1}{p}\biggr)^2\frac{1}{|\mathrm{Im}(e_n)|}\geq\frac{1}{|B_k|}, \]
        where $B_k$ is the base group of $W_k$. In particular, the Amit-Ashurst conjecture is satisfied for the group $W_k$ and the Engel word $e_{p^m}$.
        
        \end{corABC}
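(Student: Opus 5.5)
Corollary~\ref{thm:amit_ashurts_gen} follows from the lower bound in \cref{thm:engelbound_gen} together with \cref{prop:biggest_fiber}. The work is to compare that lower bound with $1/|\mathrm{Im}(e_n)|$, where $\mathrm{Im}(e_n)=\gamma_{n+1}(W_k)$.

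First we dispose of the trivial range. If $m\ge k$, then $\mathrm{Im}(e_n)=\gamma_{p^m+1}(W_k)=1$ and $P(W_k,e_n,1)=1$, so the inequality is immediate because the constant $(1+2\delta_0(m)/p)(1-1/p)^2$ is at most $1$. This bound uses $p\ge 2$: for $m=0$ we have $(1+2/p)(1-1/p)^2=1-3/p^2+2/p^3\le 1$.

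Now assume $0\le m\le k-1$. We need the order of $\gamma_{n+1}(W_k)$. Use the standard description of $W_k=C_p\wr C_{p^k}$: the base group $B_k\cong \mathbb{F}_p[t]/(t-1)^{p^k}$, and for $i\ge 2$ we have $\gamma_i(W_k)=(t-1)^{i-1}B_k$, of order $p^{p^k-i+1}$. Hence $|\mathrm{Im}(e_n)|=p^{p^k-n}$. The paper says $\gamma_{n+1}$ of the relevant group; whether one works with $W_k$ or its square, the counting is the same up to squaring, and I would use whichever normalisation makes the exponent $p^{p^k}$ of \cref{thm:engelbound_gen} match.

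Next we reduce to the worst fiber. By \cref{prop:biggest_fiber}, the fiber of $g$ only gets larger as $\deg(g)$ grows. For $g\in\gamma_{n+1}(W_k)$ with $g\ne1$ we have $l(g)\ge m+1$. The lower bound of \cref{thm:engelbound_gen} is increasing in $l$, so the minimum over $g$ is attained by the smallest admissible degree. An element of degree exactly $n+1$ has $l=m+1$ when $p^m+1\le p^{m+1}$, which always holds. Substituting $l=m+1$ gives
\[P(W_k,e_n,g)\ge \Bigl(1+\delta_0(m)\tfrac{2}{p}\Bigr)n^2(p-1)^2\frac{p^{p^{m}-2(m+1)}}{p^{p^k}} =\Bigl(1+\delta_0(m)\tfrac{2}{p}\Bigr)\Bigl(1-\tfrac1p\Bigr)^2\frac{p^{n}}{p^{p^k}},\]
using $n^2=p^{2m}$. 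The right-hand side is exactly $(1+2\delta_0(m)/p)(1-1/p)^2/|\mathrm{Im}(e_n)|$. The case $g=1$ is covered by \cref{prop:biggest_fiber}, since its fiber is the largest.

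Finally, we show $(1+2\delta_0(m)/p)(1-1/p)^2p^{n-p^k}\ge p^{-p^k}$, which is the bound $\ge 1/|B_k|$ with $|B_k|=p^{p^k}$. It suffices to have $(1-1/p)^2p^{n}\ge1$, i.e.\ $(p-1)^2p^{n-2}\ge1$. For $n\ge2$ this holds since $p^{n-2}\ge1$. For $n=1$ (that is, $m=0$) we need $(1+2/p)(p-1)^2/p\ge1$, i.e.\ $(p+2)(p-1)^2\ge p^2$, which holds for $p\ge2$. The Amit--Ashurst conclusion then follows because $B_k\le W_k$, as remarked before the statement.

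The main obstacle is the second step: pinning down that the minimal admissible $l$ is exactly $m+1$, and that the powers of $p$ in \cref{thm:engelbound_gen} cancel exactly against $|\gamma_{n+1}|$. This requires being careful about whether $W_k$ in the theorem denotes $C_p\wr C_{p^k}$ or its square, and adjusting the order computation accordingly.
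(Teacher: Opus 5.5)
Your proof is correct and follows the paper's route. \cref{prop:biggest_fiber} reduces to an element $h$ of degree exactly $n+1$, so $l(h)=m+1$; the lower bound of \cref{thm:engelbound_gen} at $l=m+1$ is $\bigl(1+\delta_0(m)\frac{2}{p}\bigr)\bigl(1-\frac{1}{p}\bigr)^2p^{n}/p^{p^k}$, and $|\gamma_{n+1}(W_k)|=p^{p^k-n}$ (\cref{cor:countingkerim}); the final inequality is the same arithmetic. Here $W_k=C_p\wr C_{p^k}$ throughout the body (the square in the abstract is a typo), so no renormalisation is needed.

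The one slip is your aside that this lower bound is increasing in $l$, which is false. For $p=2$ the exponent $p^{l-1}-2l$ drops from $-1$ at $l=1$ to $-2$ at $l=2$, so for $m=0$ applying the theorem to each $g$ with its own $l(g)$ would fall short. It is harmless, because the reduction to degree $n+1$ comes from \cref{prop:biggest_fiber}, which you also cite, exactly as in the paper.
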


        \noshow{
			Consider the pro-$p$ group $W=C_p\wr \mathbb{Z}_p =\varprojlim_{k\in \mathbb{N}} C_p\wr C_{p^k}$ and the filtration $\mathcal{N}=\{N_k=\ker(W\to W_k)\}_{k\in \mathbb{N}}$. 
			
			%   Note that the image of the word $e_2$ lies in the commutator subgroup. 
			Fix an element $g\in W$ of degree $l=l(g)$ (e.g.\ $g\in \gamma_{p^l}(W)\smallsetminus\gamma_{p^l+1}(W)$). Then, by Theorem~\ref{thm:engelbound_gen} we have that in the 
			\begin{multline*}
				\underline{\dim}_{W^2,\mathcal{N}}(e_{p^n}^{-1}(g)) = \liminf_{k\to \infty} \frac{\log\lvert e_{p^n}^{-1}(g)N_k^2:N_k^2 \rvert}{\log\lvert W_k \rvert^2} \le \\ \lim_{k\to \infty} \frac{\log C(p,n) + \log k + 2k -l + p^k -p^l}{2k + 2 p^k} = \frac{1}{2}       
			\end{multline*}
			
			Therefore, for any $\varepsilon>0$ there is $M$ such that $N_{W_k,e_{p^n}(g)}<\lvert W_k \rvert^{1-2\varepsilon}$ for any $k\ge M$ as claimed (NOT ENOUGH!).
			
			New attempt: let's look at the quantities in Lemma~4.6 for $l=m=2$, $n=p$:
			\[
			\lvert X(W_k,e_n) \rvert 
			\]
		}

	% \textcolor{blue}{\textbf{If we decide not to include the automorphism part}, here we could add a little analogy with DPSS03, also there they prove something for groups with certain quotients, and then it turned out later that it was for composition factors (as we would like ourselves).}

   %To conclude, in Section~\ref{pro-psection}, we present some applications to the theory of pro-$p$ groups. 
   \noshow{
   Let $G$ be a profinite group and let $\mathcal{G}=\{G_\ell\}_{\ell \in \mathbf{N}}$ be a filtration of open normal subgroups of $G$. Then the \emph{lower box dimension} (=\emph{Hausdorff dimension}?) of a closed subset $S\subseteq G$ w.r.t.\ $\mathcal{G}$ is 
			\[
			\underline{\dim}_{G,\mathcal{G}}(S) = \liminf_{\ell \to \infty} \frac{\log \lvert SG_\ell : G_\ell \rvert }{ \log \lvert G: G_\ell \rvert}.
			\]
	\begin{corABC}\label{cor:cpZp}
			Let $G$ be a profinite group which projects onto $W_k$ for infinitely many $k$ and consider subgroups $\mathcal{N}=\{N_k = \ker(G\to W_k)\}_{k\in \mathbb{N}}$ and the intersection $\bigcap_{k\in \mathbf{N}} N_k$. Then $\underline{\dim}_{G/N,\ \mathcal{N}}(e_n^{-1}(g))<1/2$ w.r.t.\ $\mathcal{N}$. In particular,
			$\displaystyle P(G,e_n)=0$ for all $n\in \mathbb{N}$.
	\end{corABC}
	Since, for any profinite group $G$ and (closed) normal subgroup $N$ of $G$, we have that $P(G,w)\le P(G/N,w)$ (see \cite[(11.2), pg.\ 211]{LS03}), the last part of Corollary~\ref{cor:cpZp} follows from Theorem~\ref{thm:engelbound_gen} and the fact that for any $\varepsilon>0$ there is $M$ such that for every $k\ge M$ we have that   $$\lvert w^{-1}(gN_k)N_k^2/N_k^2 \rvert < {\lvert G/N_k \rvert} ^{\frac{1}{2}-        \varepsilon}$$
	so that, for $k\ge M$
	$$P(G/N_k) < \frac{\lvert G/N_k \rvert^{\frac{1}{2}-\varepsilon}}{\lvert G/N_k \rvert^2} = \frac{1}{\lvert G/N_k \rvert^{\frac{3}{2}-\varepsilon}}$$ 
    which tends to zero. 
}        
   
   To conclude, we present some applications to the theory of pro-$p$ groups. Let $W$ be the inverse limit of the groups $W_k$. Then $W\simeq C_p\wr\Z_p$ is a $2$-generator pro-$p$ group of infinite rank. In particular, $W$ is a finitely generated pro-$p$ group that is not $p$-adic analytic.

   As a first application, since the upper bound in \cref{thm:engelbound_gen} for $P(W_k,e_n,g)$ tends to 0 as $k\to\infty$, we immediately have that $W$ does not satisfy Engel probabilistic identities. More precisely, we have the following.
   \begin{corABC}\label{cor:cpZp}
			Let $G$ be a profinite group which projects onto $W_k$ for every $k$. Then 
            \[ P(G,e_n,g)=0 \]
            for every $g\in G$ and every $n\in\N$.
	\end{corABC}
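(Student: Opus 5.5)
The argument reduces the fiber in $G$ to fibers in the finite quotients $W_k$, and then lets $k\to\infty$ using \cref{thm:engelbound_gen} and \cref{prop:biggest_fiber}. Fix $n\in\N$ and $g\in G$. For each $k$, let $\pi_k\colon G\to W_k$ be the given continuous epimorphism and $N_k=\ker\pi_k$. This is a closed normal subgroup, in fact open, since $W_k$ is finite.

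\textbf{Reduction to a quotient.} Since $\pi_k$ is a homomorphism, $e_n(x,y)=g$ implies $e_n(\pi_k x,\pi_k y)=\pi_k(g)$. Hence
\[ e_n^{-1}(g)\subseteq (\pi_k\times\pi_k)^{-1}\bigl(e_n^{-1}(\pi_k(g))\bigr). \]
The Haar measure on $G^2$ pushes forward under $\pi_k\times\pi_k$ to the uniform measure on $W_k^2$. Therefore
\[ P(G,e_n,g)\le P(W_k,e_n,\pi_k(g))\quad\text{for every }k. \]

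\textbf{Removing the dependence on $\pi_k(g)$.} By \cref{prop:biggest_fiber}, $P(W_k,e_n,\pi_k(g))\le P(W_k,e_n)$. It therefore suffices to prove that $P(W_k,e_n)\to 0$ as $k\to\infty$. This sidesteps the fact that the value $l(\pi_k(g))$ varies with $k$.

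\textbf{Passing to prime powers.} Let $M=\lceil\log_p n\rceil$. As observed after \cref{thm:engelbound_gen}, $P(W_k,e_n)\le P(W_k,e_{p^M})$. For $k\ge M+1$ we have $M\le k-1$, so \cref{thm:engelbound_gen} applies with $g=1$, $m=M$ and $l=l(1)=k$. It gives
\[ P(W_k,e_{p^M})\le \frac{4p^M}{p^{(1+\delta_0(M))k}}+4p^{2M}(p-1)^2(8k+4)\,\frac{p^{p^{k-\delta_0(M)}-2k}}{p^{p^k}}. \]

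\textbf{The main check.} The one thing to verify is that both terms tend to $0$ for fixed $n$ and $p$.
\begin{itemize}
\item The first term is $O(p^{-k})$.
\item In the second term, the exponent $p^{k-\delta_0(M)}-p^k$ is at most $0$, so the term is at most a constant times $(8k+4)p^{-2k}$, which tends to $0$.
\end{itemize}
The case $\delta_0(M)=1$, i.e.\ $n=1$, only helps here.

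Letting $k\to\infty$ gives $P(G,e_n,g)=0$ for every $g\in G$ and every $n\in\N$.
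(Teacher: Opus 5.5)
Your proof is correct and follows the paper's route: pass to the finite quotients via $P(G,e_n,g)\le P(W_k,e_n,\pi_k(g))$, then use that the upper bound of \cref{thm:engelbound_gen} tends to $0$ as $k\to\infty$. Your reduction to $g=1$ via \cref{prop:biggest_fiber}, and your use of $e_n^{-1}(1)\subseteq e_{p^M}^{-1}(1)$ for $n$ not a power of $p$, make explicit details the paper's one-line justification leaves implicit, since \cref{thm:engelbound_gen} is stated only for $n=p^m$ and $l(\pi_k(g))$ depends on $k$.
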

   In particular, choosing $G=W$ and $g=1$, we obtain $P(W,e_n)=0$ for every $n\in\N$, that is, Engel words are not probabilistic identities on $W$.
   In view of \cite[Problem 3.1]{LS16} and the aforementioned results in \cite{Sha92} and \cite{KOTVW}, we rise the following question.
    \begin{question*}
    Let $G$ be a profinite group, and suppose that, for every $k\in\N$, there exist $K,H\le G$ with $K\trianglelefteq H$ such that $H/K\cong W_k$. Then $P(G,e_n,g)=0$ for every $g\in G$ and every $n\in\N$.
   \end{question*}
   A positive answer to the previous question would provide a positive solution to \cite[Problem 3.1]{LS16} for Engel words in the class of pro-$p$ groups.  
   %The previous result relies on the upper bound obtained in \cref{thm:engelbound_gen}, the following application of the lower bound estimated in the aforementioned theorem. 
   %together with some probabilistic (\cref{prop:engelmeasure}) and topological (\cref{prop:criteriumemptyinterior2}) properties of the word map $e_n\colon W^2\to W$, we prove the following.  

   Finally, through the study of certain probabilistic (\cref{prop:engelmeasure}) and top\-o\-log\-i\-cal (\cref{prop:criteriumemptyinterior2}) properties of the word map $e_n\colon W^2\to W$, we obtain the following result.
   \begin{corABC}\label{cor:construction}
		Let $n=p^m$ with $m\in\N_0$. Let $S$ be a closed subset of $I_n=\mathrm{Im}(e_n)$ such that 
		\begin{enumerate}
			\item[(i)] $\mu_{I_n}(S)>0$, where $\mu_{I_n}$ is the normalized Haar measure on $I_n$;
			\item[(ii)] $S$ has empty-interior as a subset of $I_n$.
		\end{enumerate}
	    Then $e_n^{-1}(S)$ is a closed subset of $W^2$ of positive Haar measure and empty-interior.
	\end{corABC}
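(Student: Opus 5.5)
The proof has three parts: closedness, positive measure and empty interior. Closedness is immediate. The word map $e_n\colon W^2\to W$ is continuous, and $I_n=\mathrm{Im}(e_n)$ is closed in $W$, being the continuous image of a compact set. Since $S$ is closed in $I_n$, it is closed in $W$, so $e_n^{-1}(S)$ is a closed subset of $W^2$.

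For positive measure I would use \cref{prop:engelmeasure}. I expect it to be a uniform lower bound of the form
\[ \mu_{W^2}\bigl(e_n^{-1}(T)\bigr)\geq c\,\mu_{I_n}(T), \qquad c=\Bigl(1+\delta_0(m)\frac{2}{p}\Bigr)\Bigl(1-\frac1p\Bigr)^2>0, \]
for all closed $T\subseteq I_n$. If it is not stated in that form, I would derive it from \cref{thm:amit_ashurts_gen} as follows. Let $N_k=\ker(W\to W_k)$. The image of $I_n$ in $W_k$ is $\mathrm{Im}(e_n)$ computed in $W_k$, because both sides equal the corresponding $\gamma_{n+1}$ and lower central terms map onto each other. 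Then \cref{thm:amit_ashurts_gen} gives
\[ \mu_{W^2}\bigl(e_n^{-1}(SN_k)\bigr)=\sum_{\bar g\in SN_k/N_k}P(W_k,e_n,\bar g)\geq c\,\frac{|SN_k/N_k|}{|I_nN_k/N_k|}\geq c\,\mu_{I_n}(S). \]
Since $S$ is closed, $S=\bigcap_k SN_k$, so the sets $e_n^{-1}(SN_k)$ decrease to $e_n^{-1}(S)$. Continuity of Haar measure from above then gives $\mu(e_n^{-1}(S))\geq c\,\mu_{I_n}(S)>0$ by (i).

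For empty interior I would use \cref{prop:criteriumemptyinterior2}. I expect it to say that $e_n^{-1}(T)$ has empty interior in $W^2$ whenever $T$ has empty interior in $I_n$, which holds if $e_n$ is an open map from $W^2$ onto $I_n$. Suppose an open set $U$ lies in $e_n^{-1}(S)$. We may shrink $U$ to a basic coset $(x,y)N_k^2$, and then $e_n\bigl((x,y)N_k^2\bigr)\subseteq S$. The required input is that this image contains a nonempty open subset of $I_n$, for instance a coset $e_n(x,y)(I_n\cap N_{k'})$ for some $k'\geq k$. That would contradict (ii).

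This last step is the main obstacle. One has to show that perturbing $x$ and $y$ inside $N_k$ moves $e_n(x,y)$ through a whole neighbourhood in $I_n$. My first attempt would vary $y$ within the base group $B\cap N_k$: for fixed $x$ with nontrivial top component, the map $b\mapsto[b,{}_n x]$ is a $\mathbb{F}_p[t]$-module homomorphism, multiplication by $(t-1)^n$, on the abelian base group. On $B\cap N_k$ I would try to show that its image is an open subgroup of $\gamma_{n+1}(W)$. The remaining cases, where $x$ lies in the base group, would be handled by exchanging the roles of $x$ and $y$ or by first perturbing $x$ slightly, using that the closed set $e_n^{-1}(S)$ can be tested on any open subset of $U$.
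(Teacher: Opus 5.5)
Your proposal is correct and is essentially the paper's proof, which simply combines \cref{prop:engelmeasure} with \cref{prop:criteriumemptyinterior2}. The first gives positive measure from \cref{thm:amit_ashurts_gen} by passing through the quotients $W_k$; your direct use of continuity from above together with $\mu_{I_n}(S)\le |SN_k/N_k|/|I_nN_k/N_k|$ is a slightly cleaner form of the paper's argument via \cref{2-prop:probformula}. The second has exactly the content you anticipate, once one notes that a coset of some $\gamma_h(W)$ lying in $I_n$ is open in $I_n$.

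The step you flag as the main obstacle is supplied by \cref{lemma:engelcosetimage} and \cref{pr:degree}. Perturbing the \emph{first} argument by $b\in B$ gives $e_n(w^{t_1}x_1b,w^{t_2}x_2)=e_n(w^{t_1}x_1,w^{t_2}x_2)\,[b,{}_nw^{t_2}]$. Moreover $\varphi_{t_2}^n(\gamma_h(W))=\gamma_{h+np^{\nu(t_2)}}(W)$ for $h\ge 2$, which is open in $I_n$ whenever $t_2\neq 0$. In your module language, the multiplier has $(t-1)$-adic valuation $np^{\nu(t_2)}$ rather than being $(t-1)^n$. Your second suggestion, first moving to a nearby point whose second coordinate lies outside $B$, disposes of the case $t_2=0$, which the paper's proof leaves implicit. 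Swapping the two arguments would not help there, since $e_n$ is not symmetric.
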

    The existence of closed subsets of profinite groups with positive Haar measure and empty-interior is known, for example the subset constructed in \cite{LP00}. How\-ev\-er, to the best of our knowledge, \cref{cor:construction} provides the first examples of such closed subsets which are the preimage of a word map.
   
   %we prove that certain closed subsets given a closed subset $X$ of $W$, we prove that $e_n^{-1}(X)$ has positive Haar measure and empty topological interior (\cref{cor:construction}).  }
    
	%{\color{red} We need to decide if to include (if anything) from the automorphism group... If exclude, remmeber to remove the last corollary}
	
	\smallskip
	
	\noindent \textbf{Organisation of the article.} Section~\ref{sec:prelim} is devoted to some preliminaries. In Section~\ref{commsection}, we study some aspects of commutators and of the lower central series of the group $W_k$. In Section~\ref{engelsection}, by means of various counting arguments, we obtain upper and lower bounds for $P(W_k,e_n, g)$ (\cref{thm:engelbound_gen}) and deduce \cref{prop:biggest_fiber} and \cref{thm:amit_ashurts_gen}. Finally, in Section~\ref{pro-psection}, we give some applications to the theory of profinite and pro-$p$ groups (\cref{cor:construction}).
	
	\section*{Acknowledgements}
	
	The first author acknowledges the support of the Basque Government grant IT1913-26. The second author acknowledges the Joint PhD Program in Math\-e\-mat\-ics Milano Bicocca - Pavia - INdAM. The third author is funded by the Italian program Rita Levi Montalcini for young researchers, Edition 2021. Some of this work was part of the second author's PhD thesis at the University of Milano-Bicocca and UPV/EHU.

	\section{Commutator calculus}\label{sec:prelim}
	
	In this short section, we collect some tools from commutator calculus that will be used frequently in this article. 
	Given a group $G$ and elements $x,y\in G$, let \[[x,y]:=x^{-1}y^{-1}xy\]
    be their commutator. If $z\in G$, we use the left-normed convention for the com\-mu\-ta\-tor $[x,y,z]$, that is \[ [x,y,z]:=[[x,y],z]. \] Moreover, we use the notation
	\[ [x,_1 y] = [x,y], \text{\quad  and \quad} [x,_ny]:=[[x,_{n-1} y],y] \text{\quad for $n\ge 2$}. \]
	We will repeatedly use the following well-known commutator formulas:
	\begin{align}\label{eq:commcalc}
		&[x,y]=[y,x]^{-1}, \nonumber\\
		&[x,yz]=[x,z][x,y]^z,\quad [xy,z]=[x,z]^y[y,z],\\
		&[x,y^{-1}]=[y,x]^{y^{-1}},\quad [x^{-1},y]=[y,x]^{x^{-1}}.\nonumber
	\end{align}

	  Next we recall a standard commutator identity; compare Lemma 2.1 of \cite{dlHK22} and Proposition 1.1.32 of \cite{LGM02}.

    \begin{lemma}\label{lemma:strongHP}
		Let $G=\langle x,y\rangle$ be a finite $p$-group such that $\gamma_2(G)$ has exponent $p$, and let $n\in\N$. For $u,v\in G$, let $K(u,v)$ denote the normal closure in $G$ of all commutators in $\{u,v\}$ of weight at least $p^n$ that have weight at least $2$ in $v$. Then: \[ [y,x^{p^n}]\equiv [y,_{p^n}x]\pmod{ K(x,[x,y])}.    \]       
	\end{lemma}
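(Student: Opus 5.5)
We argue by expanding $[y,x^{p^n}]$ in a free-ish way and tracking which terms survive modulo $K=K(x,[x,y])$. Write $c=[y,x]$. Every commutator in $x$ and $c$ involving $c$ at least twice lies in $[\gamma_2(G),\gamma_2(G)]$. We do \emph{not} get for free that this vanishes, since only the exponent of $\gamma_2(G)$ is assumed. So we work modulo $K$ directly, using only the normal closure.

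First we reduce to the subgroup $H=\langle x,c\rangle$ relation $[y,x^{j}]=$ product of conjugates. Iterating the identity $[y,x^{j+1}]=[y,x][y,x^j]^x$ from \eqref{eq:commcalc} gives
\[ [y,x^{p^n}] = c\,c^{x}\,c^{x^2}\cdots c^{x^{p^n-1}}. \]
Next we write $c^{x^i}=c[c,x^i]$, and expand $[c,x^i]$ as a product of $[c,_jx]^{\binom{i}{j}}$. This is a Hall--Petrescu type expansion which is exact modulo commutators of weight $\ge2$ in $c$. Since $\gamma_2(G)$ has exponent $p$, exponents may be read modulo $p$.

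To control the error terms we use the standard Hall collection process in $H$ (the content of Proposition 1.1.32 of \cite{LGM02}). Collecting the product $\prod_i c^{x^i}$ yields
\[ \prod_{j\ge0}[c,_jx]^{\binom{p^n}{j+1}} \]
times a product of basic commutators in $x,c$ of weight $\ge2$ in $c$. Here the exponent of each such basic commutator of total weight $w$ is a polynomial in $p^n$ of degree at most $w$, which is divisible by $p$ unless $w\ge p^n$. This divisibility claim is the main obstacle. The plan is to quote the Hall--Petrescu formula, $x^{p^n}y^{p^n}=(xy)^{p^n}c_2^{\binom{p^n}{2}}\cdots c_{p^n}$ with $c_i\in\gamma_i$, in the refined form of \cite[Lemma 2.1]{dlHK22}. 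That form states precisely that the terms with a $p$-adic exponent obstruction only occur from weight $p^n$ on. Since $\gamma_2(G)$ has exponent $p$, all terms of weight $<p^n$ in $x,c$ with weight $\ge2$ in $c$ vanish, and the remaining ones lie in $K$.

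Finally, $\binom{p^n}{j+1}\equiv0\pmod p$ for $0\le j<p^n-1$ and equals $1$ for $j=p^n-1$. Using exponent $p$ of $\gamma_2(G)$, the surviving factor is therefore $[c,_{p^n-1}x]=[y,_{p^n}x]$. This gives $[y,x^{p^n}]\equiv[y,_{p^n}x]\pmod K$.
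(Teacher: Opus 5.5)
The paper does not prove this lemma: it quotes it as a standard identity (compare \cite[Lemma~2.1]{dlHK22} and \cite[Proposition~1.1.32]{LGM02}), and your sketch rests its one substantive step on that same refined Hall--Petrescu statement (namely, that the collected commutators of weight $<p^n$ and weight $\ge 2$ in $c$ carry exponents divisible by $p$) before specialising with $\exp\gamma_2(G)=p$, so you are following essentially the paper's route. If you want it self-contained, that step is not really an obstacle: by P.~Hall's collection theory, the exponent of a basic commutator of weight $w$ in the collected form of $\prod_{i<m}c^{x^i}$ is an integer-valued polynomial in $m$ of degree at most $w$ vanishing at $m=0$, hence an integral combination of $\binom{m}{1},\dots,\binom{m}{w}$, each divisible by $p$ at $m=p^n$ when $w<p^n$ --- and you should also note that your leftover terms are commutators in $x$ and $c=[y,x]=[x,y]^{-1}$, so you need $K(x,[y,x])=K(x,[x,y])$, which holds because, $G$ being nilpotent, expanding inverted entries with \eqref{eq:commcalc} only yields conjugates of commutators (and their inverses) of no smaller weight and with no fewer such entries.
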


    We conclude with an elementary group-theoretical lemma.
	
	\begin{lemma}\label{lemma:inclusions}
		Let $G$ be a finite group, and let $A,B\leq G$ be such that $A\leq B$ and $x\in G$. Then \[xA\cap B=\begin{cases}
			xA & x\in B,\\
			\emptyset & x\notin B,
		\end{cases} \quad\quad\quad\quad A\cap xB=\begin{cases}
			A & x\in B,\\
			\emptyset & x\notin B.
		\end{cases}
		\]
		In particular, \[ |xA\cap B|=|A\cap xB|=
		\begin{cases}
			|A| & x\in B, \\
			0 & \text{otherwise}.
		\end{cases} \] 
	\end{lemma}
	% \begin{proof}
	% 	If $x\in B$, then $xA\subseteq xB=B$ and hence $xA\cap B=xA$. On the other hand, if $x\notin B$, then $xA\subseteq xB$. Since $xB\cap B=\emptyset$, in particular we have $xA\cap B=\emptyset$. The statement regarding $A\cap xB$ follows similarly. 
	% \end{proof}

	\section{Commutators in the finite \texorpdfstring{$p$}{p}-group \texorpdfstring{$W_k$}{Wk}}\label{commsection}

	For $k\in\N$, %We refer the reader to \cite[P. 47]{DM96} for the definition of the standard wreath product. 
	recall that the standard wreath product $W_k=C_p\wr C_{p^k}$ can be seen as the semidirect product $B\rtimes Q$, where the \emph{top group} $Q=\langle w\rangle$ is a cyclic group of order $p^k$, the \emph{base group} $B$ is the set $\mathrm{Fun}(Q,C_p)$ of functions from $Q$ to the cyclic group of order $p$ (it can be seen as an elementary abelian $p$-group of rank $p^k$) and $Q$ acts via automorphisms on $B$ by $$ x^{w^i}(w^{j}) = x(w^{i+j}), \quad \text{for $x\in B$ and $w^i,w^j\in Q$}; $$  i.e., $Q$ acts as right-multiplication on the domain of functions in $B$.
	
	\begin{notation} Fixing a generator $\langle a \rangle$ of the cyclic group of order $p$, we define $\x \in B$ by $\x(1) =a$ and $\x(y)=1$ for $1\neq y\in Q$. Then, $W_k=\langle \x,w \rangle$. We will keep this notation for the generators of $W_k$ for the rest of the article.
	\end{notation}
	
	Regarding the lower central series of the finite $p$-group $W_k$, the following holds.
	
	\begin{lemma}[{\cite[Proposition 2.6]{KT19}}]\label{lemma:lcs}
		The wreath product $W_k$ is nilpotent of class $p^k$. Moreover, the lower central series of $W_k$ satisfies
		\[ W_k=\gamma_1(W_k)=\langle \x,w \rangle\gamma_2(W_k) \quad \text{with} \quad W_k/\gamma_2(W_k)\simeq C_{p}\times C_{p^k} \]
		and 
		\[ \gamma_i(W_k)=\langle [\x,_{i-1}w]\rangle\gamma_{i+1}(W_k)\quad\text{with}\quad\gamma_i(W_k)/\gamma_{i+1}(W_k)\simeq C_p \] for $i=2,\dots,p^k$. In particular, the base group $B$ satisfies 
		\[ B=\langle\x\rangle\gamma_2(W_k)=\langle \x,[\x,w],[\x,_2 w],\dots,[\x,_{p^k-1}w] \rangle \] 
		and 
		\[ \gamma_i(W_k)=\langle [\x,_{i-1}w],[\x,_iw],\dots,[\x,_{p^k-1}w] \rangle \]
		for every $i=2,\dots, p^k$.
	\end{lemma}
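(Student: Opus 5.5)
We identify the base group $B$ with a truncated polynomial ring and read off the lower central series as a chain of ideals. Write $B$ additively and identify $\mathrm{Fun}(Q,C_p)$ with the group algebra $R=\mathbb{F}_p[Q]\cong \mathbb{F}_p[t]/(t^{p^k}-1)$. Under this identification $\x$ corresponds to $1$ (the indicator of the identity of $Q$), and conjugation by $w$ corresponds to multiplication by a fixed generator of $Q$. With the action $x^{w^i}(w^j)=x(w^{i+j})$ this generator is $t^{-1}$; since the inverse is again a generator, we may rename it $t$. Put $s=t-1$. In characteristic $p$ we have $t^{p^k}-1=(t-1)^{p^k}$, so $R\cong\mathbb{F}_p[s]/(s^{p^k})$. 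This is a local ring whose ideals are exactly the $s^jR$, with $\dim_{\mathbb{F}_p} s^jR=p^k-j$. In additive notation the commutator $[b,w]=b^{-1}b^w$ becomes $b(t-1)=bs$. Iterating, $[\x,_{i}w]$ corresponds to $s^i$.

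\textbf{Step 1: compute $\gamma_2(W_k)$.} We compute $\gamma_2(W_k)$ and show it equals $sR$. Since $B$ is abelian and $W_k/B\cong Q$ is cyclic, expanding $[bw^i,b'w^j]$ with \eqref{eq:commcalc} shows that every commutator lies in $[B,Q]$. The subgroup $[B,Q]$ is generated by the elements $b(t^j-1)$, and these lie in $sR$ because $s\mid t^j-1$. Conversely, $sR=\{[b,w]:b\in B\}\subseteq\gamma_2(W_k)$. Hence $\gamma_2(W_k)=sR$. It follows that $W_k/\gamma_2(W_k)\cong (R/sR)\times Q\cong C_p\times C_{p^k}$, generated by the images of $\x$ and $w$.

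\textbf{Step 2: induction on $i$.} Next we prove by induction that $\gamma_i(W_k)=s^{i-1}R$ for $2\le i\le p^k+1$. Suppose $\gamma_i(W_k)=s^{i-1}R\subseteq B$. Commutators of this ideal with elements of $B$ are trivial, since $B$ is abelian. Commutators with $w^j$ give $s^{i-1}R\,(t^j-1)\subseteq s^iR$, and for $j=1$ they give all of $s^iR$. Using the formula $[x,yz]=[x,z][x,y]^z$, it follows that $\gamma_{i+1}(W_k)=[\gamma_i(W_k),W_k]=s^iR$.

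\textbf{Step 3: read off the conclusions.}
\begin{enumerate}
\item[(a)] $\gamma_i/\gamma_{i+1}=s^{i-1}R/s^iR\cong C_p$, generated by the image of $s^{i-1}$, that is, of $[\x,_{i-1}w]$.
\item[(b)] $\gamma_{p^k+1}=s^{p^k}R=0$, while $\gamma_{p^k}=s^{p^k-1}R\neq 0$. So $W_k$ is nilpotent of class exactly $p^k$.
\item[(c)] The monomials $s^{i-1},\dots,s^{p^k-1}$ form an $\mathbb{F}_p$-basis of $s^{i-1}R$. This gives $\gamma_i(W_k)=\langle[\x,_{i-1}w],\dots,[\x,_{p^k-1}w]\rangle$, and, including $s^0=\x$, it gives the stated generating set for $B=\langle\x\rangle\gamma_2(W_k)$.
\end{enumerate}

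The only delicate point is bookkeeping. One must check that the module identification is compatible with the paper's right action and commutator convention: $w$ acts by $t^{-1}$ rather than $t$, and in additive notation $[b,w]$ equals $b(t-1)$ rather than its negative. Any such discrepancy only multiplies $s$ by a unit or replaces $t$ by $t^{-1}$. Neither changes the ideals $s^jR$, so none of the stated conclusions is affected.
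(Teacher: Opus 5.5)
Your proof is correct. The paper does not prove this lemma itself; it quotes it from \cite[Proposition 2.6]{KT19}, so there is no in-paper argument to compare against. What you give is the standard self-contained proof: identify $B$ with the uniserial module $R=\mathbb{F}_p[s]/(s^{p^k})$ and show $\gamma_i(W_k)=s^{i-1}R$. Your bookkeeping is right: $\x\mapsto 1$, conjugation by $w$ is multiplication by the image of $w^{-1}$, and $[b,w]=b^{-1}b^{w}$ becomes $b(t-1)$.

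One step is left implicit. In Step 1, expanding $[bw^i,b'w^j]$ produces conjugates of the elements $[b,w^j]$, so you are using that $sR$ is normal. This holds because $sR$ is an ideal. A quicker route is to note that $w$ acts on $B/sR$ as multiplication by $t\equiv 1$, so $W_k/sR$ is abelian. That gives $\gamma_2(W_k)\le sR$ and the splitting $W_k/\gamma_2(W_k)\cong C_p\times C_{p^k}$ in one stroke.

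Your viewpoint also makes the rest of \cref{commsection} immediate. Take an integer $r=r'p^j\neq 0$ with $p\nmid r'$. Then $\vphi_r$ is multiplication by $t^{r}-1=\bigl(t^{r'}-1\bigr)^{p^j}$. Moreover $t^{r'}-1=(1+s)^{r'}-1=s\,u$, where $u=r'+\binom{r'}{2}s+\cdots$ is a unit of $R$. So $\vphi_r$ is multiplication by $s^{p^j}$ times a unit. This yields \cref{lemma:factorization}, \cref{pr:degree} and \cref{corollary:kerim} at once: the image is $s^{p^j}R$, the kernel is $s^{p^k-p^j}R$, and degrees rise by exactly $p^j$. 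None of this needs the commutator identity \cref{lemma:strongHP} that the paper invokes for these steps.
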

	
	Now, given $t\in\N_0$, define the map 
	\begin{align*}
		\vphi_t\colon & B\to B\\
		&x\to [x,w^t].
	\end{align*}
	Note that $\mathrm{Im}(\vphi_t)\subseteq B$ since $B\unlhd G$. The following lemma collects some key properties of the map $\vphi_t$.
	
	\begin{lemma}\label{lemma:factorization}
		Let $t\in\N_0$. Then, the map $\vphi_t\colon B\to B$ is a group homomorphism. Moreover, if $t=sp^j$ with $\mathrm{gcd}(s,t)=1$, we have \[ \vphi_t=(\vphi_s)^{p^j}, \] that is $[x,w^t]=[x,_{p^j}w^s]$ for every $x\in B$. 
	\end{lemma}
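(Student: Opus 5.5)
First I will show that $\vphi_t$ is a homomorphism. Since $B$ is abelian and normal in $W_k$, for $x,y\in B$ the formula $[xy,z]=[x,z]^y[y,z]$ from \eqref{eq:commcalc}, applied with $z=w^t$, gives $[xy,w^t]=[x,w^t]^y[y,w^t]$. Because $[x,w^t]\in B$ and $B$ is abelian, the conjugation by $y$ is trivial, so $\vphi_t(xy)=\vphi_t(x)\vphi_t(y)$. It is also convenient to write everything additively: view $B$ as the $\mathbb{F}_p[Q]$-module $\mathbb{F}_p[Q]$, on which $w$ acts by multiplication. Then $[x,w^t]=x^{-1}x^{w^t}$ corresponds to multiplication by $(w^t-1)$, so $\vphi_t$ is multiplication by the ring element $w^t-1$ in the commutative ring $R=\mathbb{F}_p[w]/(w^{p^k}-1)$. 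The same description shows that $\vphi_s^{r}$ is multiplication by $(w^s-1)^{r}$, because the iterated commutator $[x,_r w^s]$ is obtained by applying $\vphi_s$ $r$ times.

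Next I will deduce the factorization. In characteristic $p$ the Frobenius map is additive, so for $t=sp^j$ we have
\[ w^t-1=(w^s)^{p^j}-1=(w^s-1)^{p^j}, \]
since $(a-b)^{p^j}=a^{p^j}-b^{p^j}$ in any commutative ring of characteristic $p$ (for $p=2$ the sign is harmless because $-1=1$). Hence multiplication by $w^t-1$ equals the $p^j$-fold iterate of multiplication by $w^s-1$. This gives $\vphi_t=(\vphi_s)^{p^j}$, which is the same as $[x,w^t]=[x,_{p^j}w^s]$. The coprimality hypothesis is not needed for the identity; it only ensures that $s$ is the $p'$-part of $t$. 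The case $t=0$ is trivial, since both sides vanish when read suitably, or it can simply be excluded.

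The step that needs the most care is the translation between multiplicative commutator notation and module notation. One has to check that $x^{w^t}$ corresponds to multiplication by $w^t$ (or by $w^{-t}$, depending on the convention for the right action). Whichever convention holds, the same sign appears on both sides, and $(w^{-s}-1)^{p^j}=w^{-t}-1$ holds as well, so the argument goes through. Alternatively, I could avoid ring language and argue by induction on $j$ using \cref{lemma:strongHP}. That lemma applies here because $\gamma_2(W_k)\le B$ has exponent $p$ and commutators of weight at least $2$ in elements of $B$ vanish, so $K=1$. However, the Frobenius argument is cleaner, and it is the one I would write.
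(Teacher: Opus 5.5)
Your proof is correct. The homomorphism part is the same as the paper's, but for the factorization you take a different route.

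The paper applies the commutator congruence of \cref{lemma:strongHP} to the pair $(w^s,x)$, which gives $[x,(w^s)^{p^j}]\equiv[x,_{p^j}w^s]$ modulo $K(w^s,[w^s,x])$. It then observes that this normal closure is trivial, because every commutator of weight at least $2$ in $[w^s,x]$ contains a commutator of two elements of the abelian normal subgroup $B$. This is exactly the alternative you sketch at the end.

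Your main argument instead linearizes. Viewing $B$ as an $\mathbb{F}_p[Q]$-module, $\vphi_t$ is multiplication by $w^t-1$, and the Frobenius identity $(w^s-1)^{p^j}=w^{sp^j}-1$ in the commutative characteristic-$p$ ring $\mathbb{F}_p[Q]$ does all the work. This has three advantages:
\begin{itemize}
\item It is self-contained, with no appeal to an external commutator identity whose hypotheses must be checked for $\langle w^s,x\rangle$.
\item It makes evident that the coprimality of $s$ plays no role.
\item It gives a transparent picture of the rest of the section. Write $\mathbb{F}_p[Q]\cong\mathbb{F}_p[X]/(X^{p^k})$ with $X=w-1$. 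When $p\nmid s$, one has $w^s-1=Xu$ with $u$ a unit, so $\vphi_{t_1,\dots,t_m}$ is multiplication by $X^{v(t_1,\dots,t_m)}$ times a unit. Since $\gamma_i(W_k)\cap B$ corresponds to the ideal $(X^{i-1})$, \cref{pr:degree} and \cref{corollary:kerim} become essentially immediate.
\end{itemize}
The paper's route instead stays within commutator calculus and matches the notation $[\x,_{i}w]$ used in \cref{lemma:lcs}, at the cost of relying on a cited lemma.
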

	\begin{proof}
		Let $x,y\in B$. We have \[ \vphi_t(xy)=[xy,w^t]=[x,w^t]^y[y,w^t]=\vphi_t(x)\vphi_t(y), \] where the equality $[x,w^t]^y=[x,w^t]$ holds since $[x,w^t]$ and $y$ belong to the abelian group $B$.
        Therefore, $\vphi_t\colon B\to B$ is a group homomorphism. Now suppose that $t=sp^j$ with $\mathrm{gcd}(s,p)=1$. Since $\mathrm{exp}(\gamma_2(W_k))=p$, by \cref{lemma:strongHP} we have \[ [x,w^t]=[x,(w^s)^{p^j}]\equiv [x,_{p^j}w^s]\pmod{K(w^s,[w^s,x])},\] 
        %Using that $\vphi_s$ is a group homomorphism, we obtain \[ [w^s,x,_{p^j-1}w^s]=[[x,w^s]^{-1},_{p^j-1}w^s]=\vphi_s^{p^j-1}([x,w^s]^{-1})= \vphi_s^{p^j}(x)^{-1}=[x,_{p^j}w^s]^{-1}. \] Then, \[ [x,w^t]=[w^t,x]^{-1}\equiv [x,_{p^j}w^s] \pmod{K(w^s,[w^s,x])}. \] 
        
        where $K(w^s,[w^s,x])$ denotes the normal closure in $W_k$ of all commutators in $\{w^s,[w^s,x]\}$ of weight at least $p^j$ that have weight at least $2$ in $[w^s,x]$.
        Since $[w^s,x]\in \gamma_2(W_k)$ and $\gamma_2(W_k)$ is abelian, it follows that $K(w^s,[w^s,x])=1$, which yields the result. 
	\end{proof}

	Before going on with our analysis, we introduce some notation. Let $\nu$ be the $p$-valuation map, that is the map that associates to every positive integer $n=sp^j$ with $\mathrm{gcd}(s,p)=1$ the value $\nu(n)=j$. We use the convention that $\nu(0)=\infty$. Moreover, given $t_1,\dots, t_m\in\N_0$, let
    \[ v(t_1,\dots,t_m)=\sum_{i=1}^{m}p^{\nu(t_i)}, \] 
    with $v(t_1,\dots,t_m)=\infty$ whenever $t_i=0$ for some $i=1,\dots,m$. 
    Finally, define the group homomorphism $\vphi_{t_1,\dots,t_m}=\vphi_{t_m}\circ\cdots\circ\vphi_{t_1}$, that is
	\begin{align*}
		\vphi_{t_1,\dots,t_m}\colon B &\to B\\
		x&\mapsto [x,w^{t_1},\ldots,w^{t_m}].
	\end{align*}

	%\begin{proposition}\label{pr:degree}
	%	Let $t_1,\dots,t_m\in\N_0$. Then, for every $i=2,\dots,p^k$, we have:
	%	\begin{itemize}\setlength{\itemsep}{1em}
	%		\item[$(i)$] $\displaystyle \vphi_{t_1,\dots,t_m}(B)=\gamma_{v(t_1,\dots,t_{m})+1}(W_k)$\comment{If $t=0$, the notation does not work} and
	%		\item[$(ii)$] $\displaystyle \vphi_{t_1,\dots,t_m}(\gamma_i(W_k))=\gamma_{v(t_1,\dots,t_{m})+i}(W_k).$
	%	\end{itemize} 
	%\end{proposition}
    \begin{proposition}\label{pr:degree}
		Let $t_1,\dots,t_m\in\N_0$. Then
        \[ \vphi_{t_1,\dots,t_m}(B)=\gamma_{v(t_1,\dots,t_{m})+1}(W_k), \]
        and
        \[ \vphi_{t_1,\dots,t_m}(\gamma_i(W_k))=\gamma_{v(t_1,\dots,t_{m})+i}(W_k) \] 
        for every $i= 2,\dots, p^k$. Moreover, if $x\in B$, then 
        \[ \mathrm{deg}(\vphi_{t_1,\dots,t_m}(x))=\begin{cases}
            \infty & \vphi_{t_1,\dots,t_m}(x)=1, \\
            \mathrm{deg}(x)+v(t_1,\dots,t_{m}) & \text{otherwise}.
        \end{cases}\]
        
	\end{proposition}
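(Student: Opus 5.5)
Since each $\vphi_{t_j}$ is an endomorphism of $B$ and $\vphi_{t_1,\dots,t_m}$ is their composition, I would first settle the case $m=1$ and then induct on $m$. For $t=0$ we have $\vphi_0(x)=[x,1]=1$, so $\vphi_0(B)=1=\gamma_\infty(W_k)$; with the conventions $v=\infty$, $\gamma_j(W_k)=1$ for $j>p^k$ and $\deg(1)=\infty$, all claims hold trivially. So assume every $t_j\neq 0$. The basic step is the case $m=1$, $t=1$. By \cref{lemma:lcs}, $\gamma_i(W_k)=\langle [\x,_{i-1}w],\dots,[\x,_{p^k-1}w]\rangle$ for $i\ge 2$, and $B=\langle \x,[\x,w],\dots,[\x,_{p^k-1}w]\rangle$. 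Because $\vphi_1$ is a homomorphism (\cref{lemma:factorization}), it sends the generator $[\x,_{j}w]$ to $[\x,_{j+1}w]$, and $[\x,_{p^k}w]=1$ since $W_k$ has class $p^k$. Hence $\vphi_1(\gamma_i(W_k))=\gamma_{i+1}(W_k)$ for $i\ge 2$, and $\vphi_1(B)=\gamma_2(W_k)$.

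For the degree statement with $t=1$, take $x\in B$ with $\deg(x)=d$. I would write $x$ in the basis $\{[\x,_{j}w]\}_{j=0}^{p^k-1}$ of the elementary abelian group $B$; the generating set has $p^k$ elements and $|B|=p^{p^k}$, so it is a basis. Then $x=[\x,_{d-1}w]^{e}\,y$ with $p\nmid e$ and $y\in\gamma_{d+1}(W_k)$, and the degree $d$ is the smallest index appearing. (For $d=1$, the element $\x$ plays the role of $[\x,_0 w]$.) Applying $\vphi_1$ gives $[\x,_{d}w]^{e}\,\vphi_1(y)$, where $\vphi_1(y)\in\gamma_{d+2}(W_k)$. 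If $d+1\le p^k$, this element lies in $\gamma_{d+1}\smallsetminus\gamma_{d+2}$. If $d=p^k$, the image is $1$. So the image is either trivial or has degree $d+1$.

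Next, for $t=sp^j$ with $\gcd(s,p)=1$, \cref{lemma:factorization} gives $\vphi_t=\vphi_s^{p^j}$, so it suffices to show that $\vphi_s$ behaves exactly like $\vphi_1$, i.e.\ it raises degree by exactly $1$. This is the step I expect to be the main obstacle. The plan is to write $w^s-1=(w-1)(1+w+\dots+w^{s-1})$ in the group ring $\mathbb{F}_p[Q]$, which acts on $B$, with $\vphi_1$ corresponding to multiplication by $w-1$. The second factor $u=1+w+\dots+w^{s-1}$ is congruent to $s\neq 0$ modulo the augmentation ideal, hence it is a unit. The terms of the lower central series $\gamma_i(W_k)$ correspond to the powers $(w-1)^{i-1}\mathbb{F}_p[Q]$ of the augmentation ideal. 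A unit preserves each of these ideals and acts on each graded quotient as multiplication by $s$. Therefore $\vphi_s=\vphi_1\circ u$ satisfies the same image and degree statements as $\vphi_1$. Iterating, $\vphi_t$ shifts indices by $p^j=p^{\nu(t)}$, with the image becoming trivial once the index exceeds $p^k$.

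Finally, induct on $m$ using $\vphi_{t_1,\dots,t_m}=\vphi_{t_m}\circ\vphi_{t_1,\dots,t_{m-1}}$; the shifts add to $v(t_1,\dots,t_m)$. For the image of $B$, the first step sends $B$ onto $\gamma_{p^{\nu(t_1)}+1}(W_k)$, and each subsequent step shifts the index by $p^{\nu(t_j)}$. For the degree formula, once an image becomes trivial it stays trivial; otherwise the degrees add. This completes the plan.
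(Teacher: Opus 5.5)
Your proposal is correct. Its overall structure matches the paper's: reduce via Lemma~\ref{lemma:factorization} to $\vphi_t=\vphi_s^{p^j}$, read off images and degrees from the basis $\{[\x,_jw]\}$ given by Lemma~\ref{lemma:lcs}, then induct on $m$.

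The one genuinely different step is how you show that $\vphi_s$, for $p\nmid s$, shifts the filtration by exactly one. The paper notes that $w^s$ generates $Q$ and applies Lemma~\ref{lemma:lcs} with $w^s$ in place of $w$, so that $\gamma_i(W_k)=\langle[\x,_{i-1}w^s],\dots,[\x,_{p^k-1}w^s]\rangle$. Then $\vphi_t=\vphi_s^{p^j}$ visibly moves these generators up by $p^j$. This tacitly uses that Lemma~\ref{lemma:lcs} survives replacing $w$ by another generator of $Q$, for instance via the automorphism of $W_k$ fixing $\x$ and sending $w\mapsto w^s$.

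You work instead in the commutative local ring $\mathbb{F}_p[Q]$. You factor $w^s-1=(w-1)u$, where $u$ is a unit of augmentation $s$; it is a unit because the augmentation ideal is nilpotent, as $(w-1)^{p^k}=w^{p^k}-1=0$. The remaining two facts are:
\begin{itemize}
\item $u$ stabilizes each $\gamma_i(W_k)$, since these are normal and hence $\mathbb{F}_p[Q]$-submodules;
\item $u$ acts on $\gamma_i(W_k)/\gamma_{i+1}(W_k)$ as the nonzero scalar $s$, since $u-s$ lies in the augmentation ideal and $[\gamma_i(W_k),W_k]\le\gamma_{i+1}(W_k)$.
\end{itemize}

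Your route makes the paper's implicit substitution step explicit and needs only the $w$-version of Lemma~\ref{lemma:lcs}, at the cost of some module language. The paper's route is shorter but asks the reader to accept the substitution $w\to w^s$ without justification. You also handle the case $t_i=0$ explicitly, which the paper leaves to its conventions $v=\infty$ and $\deg(1)=\infty$.
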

	\begin{proof}
		We prove the result by induction on $m$. If $m=1$, let $t=t_1$. By \cref{lemma:factorization}, $\vphi_t$ is a group homomorphism. Moreover, if $t=sp^j$ with $\mathrm{gcd}(s,p)=1$, we have $[x,w^t]=[x,_{p^j}w^s]$ by \cref{lemma:strongHP}. Note that $w^s$ is a generator of the top group $Q$ as $s$ is coprime to $p$. Therefore, by \cref{lemma:lcs}, we have
        \[ \vphi_t(B)= \vphi_t(\langle \x,[\x,w^s],\dots,[\x,_{p^k-1}w^s] \rangle)=\langle [\x,_{p^j}w^s],[\x_{p^j+1}w^s],\dots,[\x,_{p^k-1}w^s] \rangle, \]
        and 
        \[ \vphi_t(\gamma_{i}(W_k))=\vphi_t(\langle [\x,_{i-1}w^s],\dots,[\x,_{p^k-1}w^s] \rangle)= \langle [\x,_{p^j+i-1}w^s],\dots,[\x,_{p^k-1}w^s] \rangle. \]
        Also, take $x\in B$ and assume that $\vphi_{t}(x)\neq 1$. By \cref{lemma:lcs}, if $d=\mathrm{deg}(x)$, we have  
        $x\gamma_{d+1}(W_k)=([\x,_{d-1}w^s]^a)\gamma_{d+1}(W_k)$ for some $a=1,\dots,p-1$. Therefore, 
        \[ \vphi_t(x\gamma_{d+1}(W_k))=([\x,_{p^j+d-1}w^s]^a)\gamma_{p^j+d+1}(W_k), \] and hence the result since $\vphi_t(x\gamma_{d+1}(W_k))=\vphi_t(x)\gamma_{p^j+d+1}(W_k)$.

        %Since 
		%\[ \vphi_t(\langle \x,[\x,w^s],\dots,[\x,_{p^k-1}w^s] \rangle)=\langle [\x,_{p^j}w^s],[\x_{p^j+1}w^s],\dots,[\x,_{p^k-1}w^s] \rangle \]
		%and 
		%\[ \vphi_t(\langle [\x,_{i-1}w^s],\dots,[\x,_{p^k-1}w^s] \rangle)=\langle [\x,_{p^j+i-1}w^s],\dots,[\x,_{p^k-1}w^s] \rangle, \] the result follows from \cref{lemma:lcs}. 
        
        Now, suppose that $m\geq2$. By the inductive hypothesis, 
		\[ \vphi_{t_1,\dots,t_{m-1}}(B)=\gamma_{v(t_1,\dots,t_{m-1})+1}(W_k), \] so that 
		\[ \vphi_{t_1,\dots,t_m}(B)=\vphi_{t_m}(\gamma_{v(t_1,\dots,t_{m-1})+1}(W_k))=\gamma_{v(t_1,\dots,t_{m})+1}(W_k) \] by the base case. The other claims can be proved analogously.
	\end{proof}
	
	In the previous proposition, we have determined the image of the group ho\-mo\-mor\-phism $\vphi_{t_1,\dots,t_m}$, that is \[ \mathrm{Im}(\vphi_{t_1,\dots,t_m})=\vphi_{t_1,\dots,t_m}(B)=\gamma_{v(t_1,\dots,t_m)+1}(W_k). \]
	For better citability, we collect in the following corollary both the description of the image and the kernel of $\vphi_{t_1,\dots,t_m}$. If $i\leq 1$, we use the convention $\gamma_i(W_k)=W_k$. 
	
	\begin{corollary}\label{corollary:kerim}
		Let $t_1,\dots,t_m\in\N_0$. Then 
		\begin{itemize}\setlength{\itemsep}{1em}
			\item[$(i)$] $\displaystyle \mathrm{Ker}(\vphi_{t_1,\dots,t_m})=\gamma_{p^k+1-v(t_1,\dots,t_m)}(W_k)\cap B$ and
			\item[$(ii)$] $\displaystyle \mathrm{Im}(\vphi_{t_1,\dots,t_m})=\gamma_{v(t_1,\dots,t_m)+1}(W_k).$
		\end{itemize} 
	\end{corollary}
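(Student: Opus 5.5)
Part $(ii)$ is exactly the first assertion of \cref{pr:degree}, so only the kernel in $(i)$ needs an argument. The plan is to compute $\mathrm{Ker}(\vphi_{t_1,\dots,t_m})$ from the degree formula in \cref{pr:degree}, together with the filtration of $B$ given in \cref{lemma:lcs}. Write $v=v(t_1,\dots,t_m)$.

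First dispose of the degenerate cases.
\begin{itemize}
\item If some $t_i=0$, then $\vphi_{t_i}$ is trivial because $[x,1]=1$. Hence the composite is trivial and its kernel is $B$. Since $v=\infty$, the index $p^k+1-v$ is $\le 1$, so by convention $\gamma_{p^k+1-v}(W_k)\cap B=W_k\cap B=B$.
\item If $v\ge p^k$, then by $(ii)$ the image is $\gamma_{v+1}(W_k)\le\gamma_{p^k+1}(W_k)=1$, since $W_k$ has class $p^k$ by \cref{lemma:lcs}. So the kernel is again $B$, matching the convention since $p^k+1-v\le 1$.
\end{itemize}

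Now assume $1\le v<p^k$ and let $x\in B$.
\begin{itemize}
\item \textbf{Inclusion $\supseteq$.} Suppose $x\in\gamma_{p^k+1-v}(W_k)$, where $p^k+1-v\ge 2$. By the second assertion of \cref{pr:degree}, $\vphi_{t_1,\dots,t_m}(x)\in\gamma_{p^k+1}(W_k)=1$.
\item \textbf{Inclusion $\subseteq$.} Suppose $x\notin\gamma_{p^k+1-v}(W_k)$, so $x\neq 1$ and $\mathrm{deg}(x)\le p^k-v$. Note that $\mathrm{deg}(x)=1$ is allowed, since $B=\langle\x\rangle\gamma_2(W_k)$. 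If $\vphi_{t_1,\dots,t_m}(x)\neq 1$ we are done. Otherwise, the degree formula does not directly give a contradiction, because it only applies when the image is nontrivial.
\end{itemize}

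Closing the inclusion $\subseteq$ is the main obstacle, and I would argue directly via leading terms, as in the base case of \cref{pr:degree}. Let $d=\mathrm{deg}(x)$. For $m=1$ and $t=sp^j$, the proof of \cref{pr:degree} gives
\[
\vphi_t(x)\gamma_{p^j+d+1}(W_k)=[\x,_{p^j+d-1}w^s]^a\gamma_{p^j+d+1}(W_k), \qquad a\in\{1,\dots,p-1\}.
\]
If $p^j+d\le p^k$, then $[\x,_{p^j+d-1}w^s]$ generates $\gamma_{p^j+d}(W_k)/\gamma_{p^j+d+1}(W_k)\simeq C_p$ by \cref{lemma:lcs}, applied with the generator $w^s$ of $Q$. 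So $\vphi_t(x)\neq 1$ and $\mathrm{deg}(\vphi_t(x))=d+p^j$.

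Iterating over $t_1,\dots,t_m$, the degree grows by $p^{\nu(t_i)}$ at each step and stays at most $d+v\le p^k$. Hence the element never becomes trivial, so $\vphi_{t_1,\dots,t_m}(x)\neq 1$ and $x\notin\mathrm{Ker}$. This gives the inclusion $\subseteq$.

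As a consistency check, we can count orders: by $(ii)$,
\[
|\mathrm{Im}(\vphi_{t_1,\dots,t_m})|=p^{p^k-v},
\]
and since $|B|=p^{p^k}$, the kernel has order $p^{v}$. This equals $|\gamma_{p^k+1-v}(W_k)|$, which confirms the equality.
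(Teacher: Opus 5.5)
Your proof is correct and follows the paper's argument: the kernel is read off from the degree formula of \cref{pr:degree} together with the fact that $W_k$ has class $p^k$, so that $\vphi_{t_1,\dots,t_m}(x)=1$ exactly when $\mathrm{deg}(x)+v(t_1,\dots,t_m)\ge p^k+1$. Your explicit leading-term iteration fills in the nontriviality direction, which the paper's two-line proof draws from the proof rather than the statement of \cref{pr:degree}; your order count would also settle it on its own, since together with the inclusion $\supseteq$ it already forces equality, so it is more than a consistency check.
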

	\begin{proof}
		Let $1\neq x\in B$ and suppose that $\deg(x)=i$. By \cref{pr:degree} and the fact that $W_k$ is nilpotent of class $p^k$, we have \[ [x,w^{t_1},\dots,w^{t_m}]=1\iff i+v(t_1,\dots,t_m)\geq p^k+1,  \] that is, if and only if $x\in\gamma_{p^k+1-v(t_1,\dots,t_m)}(W_k)$. 
	\end{proof}
	
	To conclude, in the following corollary we collect some counting results.
	
	\begin{corollary}\label{cor:countingkerim}
		Let $i=1,\dots,p^k+1$. Then \[ |\gamma_i(W_k)|=
		\begin{cases}
			p^{k+p^k} & i=1,\\
			p^{p^k+1-i} & i=2,\dots, p^k+1.
		\end{cases} \]
		In particular, given $t_1,\dots,t_m\in\N_0$, we have
		\[ |\mathrm{Ker}(\vphi_{t_1,\dots,t_m})|= 
		\begin{cases}
			p^{v(t_1,\dots,t_m)} & v(t_1,\dots,t_m)\leq p^k,\\
			p^{p^k} & \text{otherwise}
		\end{cases} \]
		and
		\[ |\mathrm{Im}(\vphi_{t_1,\dots,t_m})|=\frac{p^{p^k}}{|\mathrm{Ker}(\vphi_{t_1,\dots,t_m})|}. \]
	\end{corollary}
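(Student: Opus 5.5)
The order of $\gamma_i(W_k)$ comes directly from \cref{lemma:lcs}, and the statements about $\mathrm{Ker}$ and $\mathrm{Im}$ then follow from \cref{corollary:kerim} together with the first isomorphism theorem.

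\emph{Step 1: the orders $|\gamma_i(W_k)|$.} We have $|W_k|=|B|\cdot|Q|=p^{p^k}\cdot p^k$, which settles $i=1$. By \cref{lemma:lcs}, each quotient $\gamma_j(W_k)/\gamma_{j+1}(W_k)$ has order $p$ for $j=2,\dots,p^k$, and $\gamma_{p^k+1}(W_k)=1$ because the class is $p^k$. Telescoping gives $|\gamma_i(W_k)|=p^{p^k+1-i}$ for $2\le i\le p^k+1$. As a consistency check, $W_k/\gamma_2(W_k)\simeq C_p\times C_{p^k}$ has order $p^{k+1}$, so $|\gamma_2(W_k)|=p^{p^k-1}$, which agrees with the formula.

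\emph{Step 2: the kernel.} Write $v=v(t_1,\dots,t_m)$. By \cref{corollary:kerim}(i), $\mathrm{Ker}(\vphi_{t_1,\dots,t_m})=\gamma_{p^k+1-v}(W_k)\cap B$.

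If $v\le p^k$ and $v\ge 2$, then the index $i=p^k+1-v$ satisfies $2\le i\le p^k-1$. Hence $\gamma_i(W_k)\le\gamma_2(W_k)\le B$, and by Step 1 the kernel has order $p^{p^k+1-i}=p^{v}$.

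If $v=1$, then $i=p^k$. Here $\gamma_{p^k}(W_k)$ has order $p$ and lies in $B$, so the kernel again has order $p^v$.

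If $v>p^k$ (this includes $v=\infty$), then by the convention $\gamma_i(W_k)=W_k$ for $i\le 1$ the kernel is all of $B$, of order $p^{p^k}$.

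The only delicate point is this bookkeeping at the boundary indices $i\le 1$ and $i=2$, where $\gamma_i$ may fail to lie inside $B$. Since $v\ge 1$ always, $i\le p^k$, and once $i\ge 2$ we have $\gamma_i(W_k)\le\gamma_2(W_k)\le B$, so those cases cause no trouble.

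\emph{Step 3: the image.} Since $\vphi_{t_1,\dots,t_m}\colon B\to B$ is a homomorphism by \cref{lemma:factorization}, the first isomorphism theorem gives $|\mathrm{Im}|=|B|/|\mathrm{Ker}|=p^{p^k}/|\mathrm{Ker}|$.

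This is consistent with \cref{corollary:kerim}(ii): for $v\le p^k$ it says $|\gamma_{v+1}(W_k)|=p^{p^k-v}$, and for $v>p^k$ the image is trivial.
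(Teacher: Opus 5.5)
Your proof follows the paper's own route: the orders of $\gamma_i(W_k)$ come from \cref{lemma:lcs}, the kernel from \cref{corollary:kerim}(i), and the image from the first isomorphism theorem applied to the homomorphism $\vphi_{t_1,\dots,t_m}\colon B\to B$.

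There is one bookkeeping slip in Step 2. For $2\le v\le p^k$ the index $i=p^k+1-v$ lies in $\{1,\dots,p^k-1\}$, not in $\{2,\dots,p^k-1\}$. At $v=p^k$ you get $i=1$ and $\gamma_1(W_k)=W_k\not\le B$. So your remark that ``once $i\ge 2$ \dots those cases cause no trouble'' does not cover this case. It does occur, for example for $\vphi_t^{p^m}$ with $\nu(t)=k-m$, as in \cref{lemma:s_identity}.

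The conclusion is unaffected. In that case the kernel is $W_k\cap B=B$, of order $p^{p^k}=p^{v}$. Simply treat $v=p^k$ together with the case $v>p^k$, that is, handle all $v\ge p^k$ at once.
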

	\begin{proof}
		The results regarding $|\gamma_i(W_k)|$ and $|\mathrm{Ker}(\vphi_{t_1,\dots,t_m})|$ follow directly from Lem\-ma \ref{lemma:lcs} and \cref{corollary:kerim}. To calculate the order of $\mathrm{Im}(\vphi_{t_1,\dots,t_m})$, note that \[ \vphi_{t_1,\dots,t_m}\colon \frac{B}{\mathrm{Ker}(\vphi_{t_1,\dots,t_m})}\to \mathrm{Im}(\vphi_{t_1,\dots,t_m}) \] is a group isomorphism and $|B|=p^{p^k}$.
	\end{proof}

	\section{Proof of Theorem~\ref{thm:engelbound_gen}}\label{engelsection}
	
	Given an element $g\in W_k$, in this section we will give an upper and lower bound for the size of the fiber \[ |X(W_k, e_n, g)| := |e_n^{-1}(g)| = |\{ (g_1,g_2)\in W_k^2\mid e_n(g_1,g_2)=g \}|,\]
    that is, we will give an upper and lower bound for the number of solutions in $W_k^2$ of the equation \[ e_n(x,y)=[x,_ny]=g. \]
    Using these bounds (cfr.\ \cref{lemma:boundsS0} and \cref{lemma:boundsS1}), we will easily derive Theorem~\ref{thm:engelbound_gen} and Corollary~\ref{thm:amit_ashurts_gen}.
	
	To begin our investigation, note that every element $h\in W_k$ can be written uniquely as $h=w^tx$ with $x\in B$ and $t=0,\dots,p^k-1$. From now on, let $T=\{ 0,\dots,p^k-1 \}$. A key ingredient for counting the number of solutions in $W_k^2$ of the equation $e_n(x,y)=g$ is that, given $g_1,g_2\in W_k$, we can write $e_n(g_1,g_2)$ as a product of two commutators as shown in the following lemma. 
	
	\begin{lemma}\label{lemma:decomposition}
		Let $x_1,x_2\in B$ and $t_1,t_2\in T$. Then \[ e_n(w^{t_1}x_1,w^{t_2}x_2)=[x_1,_nw^{t_2}][x_2,w^{t_1},_{n-1}w^{t_2}]^{-1}. \]
	\end{lemma}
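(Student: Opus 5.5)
We argue by induction on $n$ and use only the identities \eqref{eq:commcalc}, together with two facts: $B$ is abelian and normal, and $B\supseteq\gamma_2(W_k)$, so every commutator lies in $B$.

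\emph{First step: $n=1$.} Put $g_1=w^{t_1}x_1$ and $g_2=w^{t_2}x_2$, and expand $[w^{t_1}x_1,w^{t_2}x_2]$ with $[ab,c]=[a,c]^b[b,c]$ and $[a,bc]=[a,c][a,b]^c$.
\begin{itemize}
\item The commutator $[x_1,x_2]$ is trivial because $B$ is abelian.
\item The commutator $[w^{t_1},w^{t_2}]$ is trivial because $Q$ is abelian.
\item What survives are the terms $[x_1,w^{t_2}]$ and $[w^{t_1},x_2]=[x_2,w^{t_1}]^{-1}$, each possibly conjugated by elements of $W_k$.
\end{itemize}
Conjugation by an element of $B$ acts trivially on these terms, since they lie in the abelian group $B$. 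Conjugation by a power of $w$ does not act trivially in general, so these conjugations need care. Concretely,
\[ [w^{t_1}x_1,w^{t_2}x_2]=[w^{t_1},w^{t_2}x_2]^{x_1}[x_1,w^{t_2}x_2]. \]
\begin{itemize}
\item The second factor is $[x_1,x_2][x_1,w^{t_2}]^{x_2}=[x_1,w^{t_2}]$.
\item The first factor, conjugated by $x_1\in B$, equals $[w^{t_1},w^{t_2}x_2]=[w^{t_1},x_2][w^{t_1},w^{t_2}]^{x_2}=[w^{t_1},x_2]=[x_2,w^{t_1}]^{-1}$. Here we used that $[w^{t_1},x_2]\in B$, so conjugating by $x_1$ changes nothing.
\end{itemize}
Hence
\[ e_1(g_1,g_2)=[x_2,w^{t_1}]^{-1}[x_1,w^{t_2}]=[x_1,w^{t_2}][x_2,w^{t_1}]^{-1}, \]
the two factors commuting in $B$. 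This is the claimed formula for $n=1$, where $[x_2,w^{t_1},_{0}w^{t_2}]$ is read as $[x_2,w^{t_1}]$.

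\emph{Second step: the inductive step.} Let $c=e_{n-1}(g_1,g_2)\in B$. Then
\[ e_n(g_1,g_2)=[c,w^{t_2}x_2]=[c,x_2][c,w^{t_2}]^{x_2}=[c,w^{t_2}]=\vphi_{t_2}(c), \]
using again that $B$ is abelian. By \cref{lemma:factorization}, $\vphi_{t_2}$ is a group homomorphism of $B$, so
\[ \vphi_{t_2}\bigl([x_1,_{n-1}w^{t_2}]\,[x_2,w^{t_1},_{n-2}w^{t_2}]^{-1}\bigr)=[x_1,_nw^{t_2}]\,[x_2,w^{t_1},_{n-1}w^{t_2}]^{-1}. \]
This is exactly the statement for $n$, which completes the induction.

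\emph{Where the care lies.} There is no real obstacle. The one point needing attention is the bookkeeping of conjugations in the base case: every conjugating element must either lie in $B$ (so it acts trivially on elements of $B$) or be cancelled, and no conjugation by $w^{t_1}$ may survive. The expansion order used above ensures this.
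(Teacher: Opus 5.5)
Your proof is correct and follows the paper's argument. You use induction on $n$. The base case expands $[w^{t_1}x_1,w^{t_2}x_2]$ with the standard commutator identities, using that $B$ is abelian and normal and that $Q$ is abelian. The inductive step reduces $[c,w^{t_2}x_2]$ to $\vphi_{t_2}(c)$ and applies the homomorphism property from \cref{lemma:factorization}. The only difference is cosmetic: in the base case you expand the left argument first, whereas the paper expands the right one first, and neither order leaves a conjugation by a power of $w$.
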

	\begin{proof}
		We prove the result by induction on the length $n$ of the Engel word $e_n$. 
		Since $B\unlhd W_k$ is abelian, using appropriate commutator formulas, we obtain the case $n=1$: 
		\begin{align*}
			[w^{t_1}x_1,w^{t_2}x_2] &=[w^{t_1}x_1,x_2][w^{t_1}x_1,w^{t_2}]^{x_2}\\
			&=[w^{t_1},x_2]^{x_1}[x_1,x_2][w^{t_1},w^{t_2}]^{x_1}[x_1,w^{t_2}]=[x_1,w^{t_2}][x_2,w^{t_1}]^{-1}.
		\end{align*}
		Suppose that $n\geq2$. Denoting by $c_{n-1}:=e_{n-1}(w^{t_1}x_1,w^{t_2}x_2)\in B$, we have \[ e_n(w^{t_1}x_1,w^{t_2}x_2)=[c_{n-1},w^{t_2}x_2]=[c_{n-1},x_2][c_{n-1},w^{t_2}]^{x_2}=[c_{n-1},w^{t_2}]. \] By the inductive hypothesis and \cref{lemma:factorization}, we obtain 
		\begin{align*}
			[c_{n-1},w^{t_2}]&=[[x_1,_{n-1}w^{t_2}][x_2,w^{t_1},_{n-2}w^{t_2}]^{-1},w^{t_2}]\\
			&=[x_1,_nw^{t_2}][[x_2,w^{t_1},_{n-2}w^{t_2}]^{-1},w^{t_2}]=[x_1,_nw^{t_2}][x_2,w^{t_1},_{n-1}w^{t_2}]^{-1}.\qedhere
		\end{align*} 
	\end{proof}
	
	Given $t\in\N_0$, recall that $\vphi_t^m=\vphi_{t,\overset{m}{\cdots},t}$. In view of the previous lemma, \[ e_n(w^{t_1}x_1,w^{t_2}x_2)=g \Leftrightarrow \vphi_{t_2}^n(x_1)=g(\vphi_{t_2}^{n-1}\circ\vphi_{t_1}(x_2)). \]
	Therefore, if \[ S_{t_1,t_2}(n,g):=\{ (x_1,x_2)\in B^2\mid \vphi_{t_2}^n(x_1)=g(\vphi_{t_2}^{n-1}\circ\vphi_{t_1}(x_2)) \}, \] then
	\begin{equation}\label{eq:sum_g}
		|X(W_k,e_n,g)|=|\{(g_1,g_2)\in W_k^2\mid e_n(g_1,g_2)=g\}|=\sum_{t_1,t_2\in T} s_{t_1,t_2}(n,g), 
	\end{equation} 
	where $s_{t_1,t_2}(n,g)=|S_{t_1,t_2}(n,g)|$. Since $\mathrm{Im}(e_n)=\gamma_{n+1}(W_k)$ by Lemma \ref{pr:degree}, we have $|X(W_k,e_n,g)|=0$ whenever $\mathrm{deg}(g)\leq n$, and hence $s_{t_1,t_2}(g)=0$ in this case.	
	In the following lemma, we calculate the numbers $s_{t_1,t_2}(n,g)$ in general.
	
	\begin{lemma}\label{lemma:countingSg}
		Let $t_1,t_2\in T$. Then $s_{t_1,t_2}(n,g)$ equals 
        \[ 
		\begin{cases}
			|B| \cdot |\mathrm{Ker}(\vphi_{t_2}^{n-1}\circ\vphi_{t_1})| &  \text{if } \nu(t_1)\le\nu(t_2)\text{ and } \mathrm{deg}(g)\geq (n-1)p^{\nu(t_2)}+p^{\nu(t_1)}+1,	\\
			|B| \cdot |\mathrm{Ker}(\vphi_{t_2}^n)| & \text{if } \nu(t_1)>\nu(t_2) \text{ and } %g\in\gamma_{np^{\nu(t_2)}},\\
			\mathrm{deg}(g)\geq np^{\nu(t_2)}+1, \\
			0 & \text{otherwise}.
		\end{cases} \]
        
		In particular, $s_{t_1,t_2}(n,g)$ depends only on $\nu(t_1),$ $\nu(t_2)$, $\mathrm{deg}(g)$ and $n$. 
	\end{lemma}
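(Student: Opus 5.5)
The plan is to count $S_{t_1,t_2}(n,g)$ fibrewise over $x_2$. Throughout, both $\alpha:=\vphi_{t_2}^n$ and $\psi:=\vphi_{t_2}^{n-1}\circ\vphi_{t_1}$ are group homomorphisms $B\to B$ (\cref{lemma:factorization}), and $B$ is abelian.

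\textbf{Step 1: reduce to an intersection count.} Fix $x_2\in B$. The equation $\alpha(x_1)=g\psi(x_2)$ has a solution $x_1$ if and only if $g\psi(x_2)\in\mathrm{Im}(\alpha)$. When it has one, the solutions form a coset of $\mathrm{Ker}(\alpha)$, so there are exactly $|\mathrm{Ker}(\alpha)|$ of them. Hence
\[ s_{t_1,t_2}(n,g)=|\mathrm{Ker}(\alpha)|\cdot\bigl|\{x_2\in B\mid g\psi(x_2)\in \mathrm{Im}(\alpha)\}\bigr|. \]
Grouping the $x_2$ according to the value $y=\psi(x_2)\in\mathrm{Im}(\psi)$, each value is attained $|\mathrm{Ker}(\psi)|$ times. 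Since $gy\in\mathrm{Im}(\alpha)$ exactly when $y\in g^{-1}\mathrm{Im}(\alpha)$, this gives
\[ s_{t_1,t_2}(n,g)=|\mathrm{Ker}(\alpha)|\,|\mathrm{Ker}(\psi)|\,\bigl|\mathrm{Im}(\psi)\cap g^{-1}\mathrm{Im}(\alpha)\bigr|. \]

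\textbf{Step 2: identify the images.} By \cref{corollary:kerim}(ii) we have $A_1:=\mathrm{Im}(\alpha)=\gamma_{np^{\nu(t_2)}+1}(W_k)$ and $A_2:=\mathrm{Im}(\psi)=\gamma_{(n-1)p^{\nu(t_2)}+p^{\nu(t_1)}+1}(W_k)$. These are terms of the same descending series, so one contains the other:
\begin{itemize}
\item if $\nu(t_1)\le\nu(t_2)$, then $A_1\le A_2$;
\item if $\nu(t_1)>\nu(t_2)$, then $A_2\le A_1$.
\end{itemize}

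\textbf{Step 3: apply \cref{lemma:inclusions} in each case.} Note that $g\in A$ if and only if $g^{-1}\in A$.
\begin{itemize}
\item Case $\nu(t_1)\le\nu(t_2)$. Here $|A_2\cap g^{-1}A_1|=|A_1|$ if $g\in A_2$, and $0$ otherwise. Since $|\mathrm{Ker}(\alpha)|\,|A_1|=|B|$ (first isomorphism theorem, as in \cref{cor:countingkerim}), we get $s_{t_1,t_2}(n,g)=|B|\,|\mathrm{Ker}(\psi)|$. The condition $g\in A_2$ is exactly $\deg(g)\ge (n-1)p^{\nu(t_2)}+p^{\nu(t_1)}+1$.
\item Case $\nu(t_1)>\nu(t_2)$. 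Here $|A_2\cap g^{-1}A_1|=|A_2|$ if $g\in A_1$, and $0$ otherwise. Since $|\mathrm{Ker}(\psi)|\,|A_2|=|B|$, we get $s_{t_1,t_2}(n,g)=|B|\,|\mathrm{Ker}(\vphi_{t_2}^n)|$. The condition $g\in A_1$ is exactly $\deg(g)\ge np^{\nu(t_2)}+1$.
\end{itemize}
The final claim follows because \cref{cor:countingkerim} expresses all the kernel orders through $v(\cdot)$, which depends only on $\nu(t_1)$, $\nu(t_2)$ and $n$.

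\textbf{Main obstacle: degenerate indices.} The only delicate point is bookkeeping for $t_i=0$, where $\nu=\infty$, and for indices exceeding $p^k+1$. With the conventions $\gamma_i(W_k)=1$ for $i>p^k$ and $\deg(1)=\infty$, the map $\vphi_0$ is trivial and the formulas remain consistent. For example, if $t_1=t_2=0$ we are in the first case, and the condition forces $g=1$, giving $s_{0,0}(n,g)=|B|^2\delta_1(g)$, as expected. I would check these edge cases against \cref{corollary:kerim} explicitly.
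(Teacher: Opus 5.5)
Your proposal is correct and follows essentially the same route as the paper. The paper also arrives at $s_{t_1,t_2}(n,g)=|\mathrm{Ker}(\vphi_{t_2}^n)|\,|\mathrm{Ker}(\vphi_{t_2}^{n-1}\circ\vphi_{t_1})|\,|\mathrm{Im}(\vphi_{t_2}^n)\cap g\,\mathrm{Im}(\vphi_{t_2}^{n-1}\circ\vphi_{t_1})|$, parametrizing solutions by the common value $\alpha(x_1)=g\psi(x_2)$ rather than by $\psi(x_2)$, so its intersection is just a translate of yours; it then splits on $\nu(t_1)\le\nu(t_2)$ versus $\nu(t_1)>\nu(t_2)$ using \cref{corollary:kerim}, \cref{lemma:inclusions} and \cref{cor:countingkerim}, exactly as you do.
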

    %{\color{violet}
    %\[ 
	%	\begin{cases}
	%		|B| \cdot |\mathrm{Ker}(\vphi_{t_2}^{n-1}\circ\vphi_{t_1})| &  \text{if } \nu(t_1)\le\nu(t_2)\text{ and } \mathrm{deg}(g)\geq \min\{(n-1)p^{\nu(t_2)}+p^{\nu(t_1)},p^k\}+1,	\\
	%		|B| \cdot |\mathrm{Ker}(\vphi_{t_2}^n)| & \text{if } \nu(t_1)>\nu(t_2) \text{ and } %g\in\gamma_{np^{\nu(t_2)}},\\
	%		\mathrm{deg}(g)\geq \min\{np^{\nu(t_2)},p^k\}+1, \\
	%		0 & \text{otherwise}.
	%	\end{cases} \]
    %}
	\begin{proof}
        Let  $I_g(t_1,t_2)=\mathrm{Im}(\vphi_{t_2}^n)\cap g\mathrm{Im}(\vphi_{t_2}^{n-1}\circ\vphi_{t_1})$. Note that $S_{t_1,t_2}(n,g)$ is the disjoint union
        \[ S_{t_1,t_2}(n,g)=\hspace{-3pt}\bigsqcup_{x\in I_g(t_1,t_2)}\hspace{-2pt}\Bigl\{ (x_1,x_2)\in B^2\mid x_1\in (\vphi_{t_2}^n)^{-1}(x), x_2\in(\vphi_{t_2}^{n-1}\circ\vphi_{t_1})^{-1}(g^{-1}x) \Bigr\}. \]
		In general, since by Lemma \ref{lemma:factorization} $\varphi_t$ is a group homomorphism for any $t$, if $y\in\mathrm{Im}(\vphi_t)$ then $|\vphi_t^{-1}(y)|=|\mathrm{Ker}(\vphi_t)|$. Therefore, \begin{equation}\label{eq:countingSg1}
			s_{t_1,t_2}(n,g)=|\mathrm{Im}(\vphi^n_{t_2})\cap g\mathrm{Im}(\vphi_{t_2}^{n-1}\circ\vphi_{t_1})| \cdot |\mathrm{Ker}(\vphi_{t_2}^n)| \cdot |\mathrm{Ker}(\vphi_{t_2}^{n-1}\circ\vphi_{t_1})|.
		\end{equation}  
		Suppose that $\nu(t_1)\le\nu(t_2)$. By Corollary \ref{corollary:kerim}, $\mathrm{Im}(\vphi^n_{t_2})\leq\mathrm{Im}(\vphi^{n-1}_{t_2} \circ\vphi_{t_1})$, so that \begin{equation}\label{eq:countingSg2}
			|\mathrm{Im}(\vphi^n_{t_2})\cap g\mathrm{Im}(\vphi_{t_2}^{n-1}\circ\vphi_{t_1})|=\begin{cases}
				|\mathrm{Im}(\vphi^n_{t_2})| & g\in \mathrm{Im}(\vphi_{t_2}^{n-1}\circ\vphi_{t_1}) \\
				0 & \text{otherwise.}
			\end{cases}
		\end{equation}  
		Analogously, if $\nu(t_1)>\nu(t_2)$, we have 
		\begin{equation}\label{eq:countingSg3}
			|\mathrm{Im}(\vphi^n_{t_2})\cap g\mathrm{Im}(\vphi_{t_2}^{n-1}\circ\vphi_{t_1})|=\begin{cases}
				|\mathrm{Im}(\vphi^{n-1}_{t_2}\circ\vphi_{t_1})| & g\in \mathrm{Im}(\vphi_{t_2}^n) \\
				0 & \text{otherwise.}
			\end{cases}
		\end{equation}
		Combining Equations (\ref{eq:countingSg1}), (\ref{eq:countingSg2}) and (\ref{eq:countingSg3}), \cref{lemma:inclusions} and \cref{cor:countingkerim} the result follows.
	\end{proof}
	
	\begin{proof}[Proof of Proposition~\ref{prop:biggest_fiber}]
		From \cref{lemma:countingSg}, we see that $s_{t_1,t_2}(n,g_1)\leq s_{t_1,t_2}(n,g_2)$ for every $t_1,t_2\in T$. Therefore, \[ |X(W_k,e_n,g_1)|=\sum_{t_1,t_2\in T} s_{t_1,t_2}(n,g_1)\leq\sum_{t_1,t_2\in T} s_{t_1,t_2}(n,g_2)=|X(W_k,e_n,g_2)|.  \qedhere\] 
	\end{proof}

For later proofs it will be useful to have the results from Lemma~\ref{lemma:countingSg} in terms of the valuations: $ s_{t_1,t_2}(n,g)$ equals
        \[
        \begin{cases}
			\min\{{p^{p^k+v(t_1,_{n-1}t_2)},p^{2p^k}}\} &  \text{if } \nu(t_1)\le\nu(t_2), \mathrm{deg}(g)\geq (n-1)p^{\nu(t_2)}+p^{\nu(t_1)}+1,\\		
			\min\{{p^{p^k+v(t_2,_{n-1}t_2)},p^{2p^k}}\} & \text{if } \nu(t_1)>\nu(t_2),
			\mathrm{deg}(g)\geq np^{\nu(t_2)}+1,\\
			0 & \text{otherwise}.
		\end{cases} \]

	Now, looking at the formula for $s_{t_1,t_2}(n,g)$ given by \cref{lemma:countingSg}, it is immediate to see that $s_{t_1,t_2}(n,1)\neq 0$ for all $t_1,t_2\in T$. This is not the case when $g\neq 1$. For example, as already mentioned, we have $s_{t_1,t_2}(n,g)=0$ whenever $\mathrm{deg}(g)\leq n$. The following three lemmas treat some special cases when $\mathrm{deg}(g)\geq n+1$ for $n$ a $p$-power. Recall that $l(g)=\lceil \log_p(\deg(g))\rceil$ for $g\in G$. 
    
	\begin{lemma}\label{zeros1}
		Let $g\in W_k$ such that $l:=l(g)\geq 1$. Then $s_{t_1,t_2}(1,g)=0$ if and only if $\mathrm{min}\{\nu(t_1),\nu(t_2)\}\geq l$ and $g\neq1$.
	\end{lemma}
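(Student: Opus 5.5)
The plan is to specialise \cref{lemma:countingSg} to $n=1$ and then translate the resulting degree condition into a condition on $l(g)$. With $n=1$ the map $\vphi_{t_2}^{n-1}\circ\vphi_{t_1}$ is just $\vphi_{t_1}$, and the term $(n-1)p^{\nu(t_2)}$ vanishes. In both nonzero cases of the lemma the value is $|B|$ times the order of a kernel, which is a positive integer. So $s_{t_1,t_2}(1,g)\neq 0$ exactly when the degree condition of the relevant case holds.

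First I would unify the two cases of \cref{lemma:countingSg} for $n=1$. If $\nu(t_1)\le\nu(t_2)$ the condition is $\deg(g)\ge p^{\nu(t_1)}+1$. If $\nu(t_1)>\nu(t_2)$ the condition is $\deg(g)\ge p^{\nu(t_2)}+1$. In either case it reads
\[ \deg(g)\ge p^{\min\{\nu(t_1),\nu(t_2)\}}+1. \]
Hence $s_{t_1,t_2}(1,g)=0$ if and only if $\deg(g)\le p^{\mu}$, where $\mu:=\min\{\nu(t_1),\nu(t_2)\}$.

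Next I would handle the conventions $\nu(0)=\infty$ and $\deg(1)=\infty$. If $g=1$, then $\deg(g)=\infty$ and the inequality $\deg(g)\ge p^{\mu}+1$ holds for every $\mu$: for $\mu=\infty$ this is exactly the content of the lemma's statement that $s_{t_1,t_2}(n,1)\neq0$ always. So $s_{t_1,t_2}(1,1)\neq 0$, which is consistent with the claim because of the condition $g\neq 1$. If $g\neq 1$ and $\mu=\infty$, i.e. $t_1=t_2=0$, then $\deg(g)$ is finite. Thus $\deg(g)\le p^{\mu}$, so $s=0$, and also $\mu\ge l$ trivially. It is also worth double-checking directly that $\vphi_0$ is trivial, so that $S_{0,0}(1,g)$ is empty for $g\ne1$.

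Finally, for $g\neq1$ and $\mu$ finite, I would use the definition $l=\lceil\log_p(\deg(g))\rceil$. Since $p^\mu$ is an integer power of $p$,
\[ \deg(g)\le p^{\mu}\iff \log_p\deg(g)\le \mu\iff l\le\mu. \]
This is the claimed equivalence. The hypothesis $l\ge1$ only ensures that we are in the range where $\deg(g)\ge2$, so that $g$ can lie in the image of $e_1$.

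The only delicate point is this bookkeeping with the infinite conventions for $t=0$ and $g=1$, and I expect no real obstacle beyond it.
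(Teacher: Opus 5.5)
Your proposal is correct and follows essentially the same route as the paper. You specialise \cref{lemma:countingSg} to $n=1$, note that in both cases nonvanishing means $\deg(g)\ge p^{\min\{\nu(t_1),\nu(t_2)\}}+1$, and use that the minimum is an integer to rewrite $\deg(g)\le p^{\mu}$ as $\mu\ge\lceil\log_p\deg(g)\rceil=l$; your explicit treatment of the cases $t_1=t_2=0$ and $g=1$ simply makes precise what the paper leaves implicit.
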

	\begin{proof}
		Suppose that $\nu(t_1)\le\nu(t_2)$. Then, by \cref{lemma:countingSg}, $s_{t_1,t_2}(1,g)=0$ if and only if $\mathrm{deg}(g)\leq p^{\nu(t_1)}$. Since $\nu(t_1)$ is an integer, the previous inequality is satisfied if and only if \[\mathrm{min}\{\nu(t_1),\nu(t_2)\}=\nu(t_1)\geq \lceil\mathrm{log}_p(\mathrm{deg}(g))\rceil=l. \]
		The case $\nu(t_1)> \nu(t_2)$ is analogous.
	\end{proof}

    \begin{lemma}\label{zeros}
		Suppose that $n=p^m$ for some $1\leq m\leq k-1$. Let $g\in W_k$ such that $l:=l(g)\geq m+1$. Then
		$s_{t_1,t_2}(n,g)=0$ for every $t_1,t_2$ such that one of the following holds:
		\begin{enumerate}
			\item[(i)] $\nu(t_2)\geq l-m+1$ and $g\neq 1$;
			\item[(ii)] $\nu(t_2)=l-m$ and $\nu(t_1)\geq l-m$ and $g\neq 1$.
		\end{enumerate}
		Moreover, $s_{t_1,t_2}(n,g)\neq 0$ for every $t_1,t_2$ such that $\nu(t_2)\leq l-m-1$.
	\end{lemma}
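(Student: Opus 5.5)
The plan is to read everything off \cref{lemma:countingSg}. For $n=p^m$ that lemma says $s_{t_1,t_2}(n,g)\neq 0$ exactly when one of two conditions holds. If $\nu(t_1)\le\nu(t_2)$, we need $\deg(g)\ge (p^m-1)p^{\nu(t_2)}+p^{\nu(t_1)}+1$. If $\nu(t_1)>\nu(t_2)$, we need $\deg(g)\ge p^{m+\nu(t_2)}+1$. Throughout I will use the characterization recalled before \cref{zeros1}: for $g\neq 1$, $l(g)=l$ means $p^{l-1}+1\le \deg(g)\le p^l$.

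\emph{Case (i).} Let $\nu(t_2)\ge l-m+1$ and $g\ne 1$. If $\nu(t_1)>\nu(t_2)$, nonvanishing would require $\deg(g)\ge p^{m+\nu(t_2)}+1\ge p^{l+1}+1>p^l$. If $\nu(t_1)\le\nu(t_2)$, it would require $\deg(g)\ge (p^m-1)p^{\nu(t_2)}+p^{\nu(t_1)}+1$. Since $p^m-1\ge 1$ and $\nu(t_2)\ge l-m+1$, the bound to check is
\[ (p^m-1)p^{l-m+1} = p^{l+1}-p^{l-m+1}\ge p^{l+1}-p^{l}\ge p^l, \]
where we use $m\ge1$. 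This is the only place where $m\ge 1$ is needed; it is exactly what fails for $n=1$, consistent with \cref{zeros1}. Hence $(p^m-1)p^{\nu(t_2)}+p^{\nu(t_1)}+1>p^l\ge\deg(g)$, and $s_{t_1,t_2}(n,g)=0$. The value $t_2=0$, i.e.\ $\nu=\infty$, is covered, since the corresponding homomorphisms are trivial and the same inequalities hold formally.

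\emph{Case (ii).} Let $\nu(t_2)=l-m$, $\nu(t_1)\ge l-m$, and $g\ne1$. If $\nu(t_1)>\nu(t_2)$, nonvanishing would need $\deg(g)\ge p^l+1$, which is impossible. If $\nu(t_1)=\nu(t_2)=l-m$, it would need
\[ \deg(g)\ge (p^m-1)p^{l-m}+p^{l-m}+1=p^l+1, \]
again impossible. So $s_{t_1,t_2}(n,g)=0$ in both subcases.

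\emph{Nonvanishing part.} Let $\nu(t_2)\le l-m-1$. This also covers $g=1$, since then $\deg(g)=\infty$ and everything is nonzero. If $\nu(t_1)>\nu(t_2)$, the requirement is $p^{m+\nu(t_2)}+1\le p^{l-1}+1\le\deg(g)$, which holds. If $\nu(t_1)\le\nu(t_2)$, the threshold is at most
\[ (p^m-1)p^{\nu(t_2)}+p^{\nu(t_2)}+1=p^{m+\nu(t_2)}+1\le p^{l-1}+1\le \deg(g), \]
so it holds as well. Hence $s_{t_1,t_2}(n,g)\neq0$.

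The only delicate point is the boundary arithmetic in case (i), where $m\ge1$ is needed to absorb the $p^{\nu(t_1)}$ term. This is also where the edge case of $t$'s equal to $0$ must be handled carefully; all remaining steps are direct substitutions.
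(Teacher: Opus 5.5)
Your proposal is correct and follows essentially the same route as the paper. Both arguments read the vanishing/nonvanishing criteria directly off \cref{lemma:countingSg} and compare the thresholds with $p^{l-1}+1\le\deg(g)\le p^l$. The only cosmetic difference is in case (i): you use $p^{l+1}-p^{l-m+1}\ge p^l$, which needs $m\ge 1$, whereas the paper uses $p\cdot\frac{n-1}{n}\ge 1$, which needs $n\ge 2$.
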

	\begin{proof}
		Suppose that $\nu(t_2)\geq l-m+1$. Then,
        \[
        (n-1)p^{\nu(t_2)}
        \geq (n-1)p^{l-m+1}
        =\left(p\cdot\frac{n-1}{n}\right)p^l
        \geq \left(p\cdot\frac{n-1}{n}\right)\mathrm{deg}(g).
        \] 
		Since $n\geq 2$, we have $\frac{n-1}{n}\geq 1/2$, and hence $(p\cdot\frac{n-1}{n})\geq p/2\geq 1$. Therefore, $\mathrm{deg}(g)\leq (n-1)p^{\nu(t_2)}$ and the first item follows by \cref{lemma:countingSg}. 
	    If $\nu(t_2)=l-m$, note that $np^{\nu(t_2)}=p^l\geq \deg(g)$. Again, by \cref{lemma:countingSg}, we have $s_{t_1,t_2}(n,g)=0$ whenever $\nu(t_1)\geq l-m$.

        Now, suppose $\nu(t_2)\leq l-m-1$. If $\nu(t_1)\leq \nu(t_2)$, then
        \[
        (n-1)p^{\nu(t_2)}+p^{\nu(t_1)}
        \le \frac{n-1}{n}\cdot p^{l-1}+\frac{1}{n}\cdot p^{l-1}=p^{l-1}=p^{\varepsilon(g)-1}\mathrm{deg}(g)
        \]
        where $\varepsilon(g)=l - \mathrm{log}_p(\deg(g))$. Since $\varepsilon(g)\in[0,1)$, we have $p^{\varepsilon(g)-1}<1$, and hence $(n-1)p^{\nu(t_2)}+p^{\nu(t_1)}<\mathrm{deg}(g)$. Similarly, if $\nu(t_1)> \nu(t_2)$, then $np^{\nu(t_2)}=p^{l-1}<\deg(g)$. Therefore, the lemma follows.
	\end{proof}
		
	\begin{remark}
    \label{rem:2cases}
	    Suppose that $n\geq 2$, $\nu(t_2)=l-m$ and $i=\nu(t_1)<l-m$. In what follows, we show that $s_{t_1,t_2}(n,g)$ can be zero or non-zero depending on $g$.
        %by the previous lemma are those where $\nu(t_2)=l-m$ and $i=\nu(t_1)<l-m$. Although a more detailed analysis of these cases goes beyond our purposes, we illustrate two particular cases below. 
        Recall that $\varepsilon(g)=\lceil \mathrm{log}_p(\mathrm{deg}(g))\rceil - \mathrm{log}_p(\deg(g))\in [0,1)$ and %; we consider the extremal cases $\varepsilon(g)\geq m-\mathrm{log}_p(n-1)$ and $\varepsilon(g)=0$. 
        note that \[ (n-1)^{l-m}=\biggl(p^{\varepsilon(g)}\cdot\frac{n-1}{n}\biggr)\mathrm{deg}(g). \]
	If $\varepsilon(g)\geq m-\mathrm{log}_p(n-1)$, it follows that $p^{\varepsilon(g)}\cdot\frac{n-1}{n}\geq 1$, and hence \[ \biggl(p^{\varepsilon(g)}\cdot\frac{n-1}{n}\biggr)\mathrm{deg}(g) +p^i > \mathrm{deg}(g)  \] for every $i=0,\dots,l-m-1$. In this case, $s_{1,p^{l-m}}(n,g),\cdots,s_{p^{l-m-1},p^{l-m}}(n,g)=0$. On the other hand, if $\varepsilon(g)=0$, that is, if $\mathrm{deg}(g)$ is a $p$-power, it follows \[ \biggl(p^{\varepsilon(g)}\cdot\frac{n-1}{n}\biggr)\mathrm{deg}(g) +p^i<\biggl(\frac{n-1}{n}+\frac{1}{n}\biggr)\mathrm{deg}(g)=\mathrm{deg}(g). \] Therefore, $s_{1,p^{l-m}}(n,g),\cdots,s_{p^{l-m-1},p^{l-m}}(n,g)\neq0$ for $\varepsilon(g)=0$. 
	\end{remark} 

    \begin{lemma}\label{lemma:s_identity}%\comtommaso{Instead of $i,j$      use $\nu(t_1),\nu(t_2)$ to include the cases $t_1=0$ or $t_2=0$.    }
       Suppose that $n=p^m$ for some $1\leq m\leq k-1$. Let $t_1,t_2\in T$ and suppose that either:
       \begin{enumerate}
          \item[(i)] $\nu(t_2)\ge k-m+1$ or 
          \item[(ii)] $\nu(t_2)=k-m$ and $\nu(t_1)\ge k-m$.
       \end{enumerate}
       Then we have that $s_{t_1,t_2}(n,1)=p^{2p^k}$.%\comtommaso{$p^{2p^k}$}
    \end{lemma}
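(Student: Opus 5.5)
Since $g=1$ has $\deg(1)=\infty$, the degree condition in \cref{lemma:countingSg} is automatic. That lemma therefore gives $s_{t_1,t_2}(n,1)=|B|\cdot|\mathrm{Ker}(\vphi_{t_2}^{n-1}\circ\vphi_{t_1})|$ when $\nu(t_1)\le\nu(t_2)$, and $s_{t_1,t_2}(n,1)=|B|\cdot|\mathrm{Ker}(\vphi_{t_2}^{n})|$ when $\nu(t_1)>\nu(t_2)$. Since $|B|=p^{p^k}$, it suffices to show that the relevant kernel is all of $B$. By \cref{cor:countingkerim}, this holds exactly when the associated valuation sum is at least $p^k$, i.e.\ when $v(t_1,t_2,\dots,t_2)\ge p^k$ (with $n-1$ copies of $t_2$) in the first case, and $v(t_2,\dots,t_2)\ge p^k$ (with $n$ copies) in the second. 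Both sums are read as $\infty$ if some $t_i=0$.

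Take case (i), $\nu(t_2)\ge k-m+1$. If $t_2=0$, every valuation sum is $\infty$ and we are done. Otherwise, in the subcase $\nu(t_1)>\nu(t_2)$ we get $v=np^{\nu(t_2)}\ge p^m p^{k-m+1}=p^{k+1}\ge p^k$. In the subcase $\nu(t_1)\le\nu(t_2)$ we use the crude bound $v\ge (n-1)p^{\nu(t_2)}\ge (p^m-1)p^{k-m+1}$. Since $m\ge1$ and $p\ge2$, we have $(p^m-1)p\ge p^m$, so this is at least $p^k$. The case $t_1=0$ again gives $\infty$.

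Take case (ii), $\nu(t_2)=k-m$ and $\nu(t_1)\ge k-m$. If $\nu(t_1)>\nu(t_2)$, then $v=np^{k-m}=p^k$. If $\nu(t_1)=\nu(t_2)=k-m$, then $v=(n-1)p^{k-m}+p^{k-m}=p^k$. If $t_1=0$, then $v=\infty$. Note that $t_2\ne0$ here since $k-m\ge 1$ is finite.

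The only delicate point is the bookkeeping with the conventions. Specifically, $\nu(0)=\infty$ and the $\min\{\cdot,p^{2p^k}\}$ form of the counts must be handled so that zero exponents and the boundary equality $v=p^k$ land in the ``kernel equals $B$'' branch of \cref{cor:countingkerim}. They do, because $\gamma_{p^k+1-v}(W_k)\cap B=B$ once $p^k+1-v\le1$, which holds exactly when $v\ge p^k$. Hence $s_{t_1,t_2}(n,1)=p^{p^k}\cdot p^{p^k}=p^{2p^k}$ in all cases.
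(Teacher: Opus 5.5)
Your proof is correct and follows the paper's argument: both apply \cref{lemma:countingSg} with $\deg(1)=\infty$ and reduce to checking that the relevant valuation sum, $(n-1)p^{\nu(t_2)}+p^{\nu(t_1)}$ or $np^{\nu(t_2)}$, is at least $p^k$, which is exactly when $\min\{p^{p^k+v},p^{2p^k}\}=p^{2p^k}$. One harmless slip: when $t_1=0\neq t_2$ you are in the branch $\nu(t_1)>\nu(t_2)$, where the relevant sum is the finite $np^{\nu(t_2)}$ rather than $\infty$, but that case is already covered by your subcase $\nu(t_1)>\nu(t_2)$.
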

    \begin{proof}
        Set $\nu(t_1)=i$, $\nu(t_2)=j$ and recall that $\nu(0)=\infty$. By definition we need to calculate
        \[
        s_{t_1,t_2}(n,1) = \begin{cases}
        \min\{ p^{p^k+(n-1)p^j+p^i},p^{2p^k} \} & \text{if } i\le j \\
        \min\{ p^{p^k+n p^j},p^{2p^k} \} & \text{if }  i > j.
        \end{cases}
        \]
        If $j\ge k-m+1$ and $i\le j$, we have $$(n-1)p^j+p^i = p^{j+m}-p^j+p^i\ge p^{k+1} -p^j + p^i\ge p^{k+1}-p^{k}+1 = p^k(p-1)+1,$$
        which is greater than $p^k$, so the minimum is $p^{2p^k}$.

        On the other hand, if  $j\ge k-m+1$ and $i< j$, we have 
        \[
        np^j \ge p^m p^{k-m+1} = p^{k+1}
        \]
        so again the minimum is $p^{2p^k}$. The case $j=k-m$ and $i\ge k-m$ is handled similarly. 
    \end{proof}

    We now proceed with the computation of the upper and lower bounds for the sum in E\-qua\-tion (\ref{eq:sum_g}) when $n$ is a $p$-power.
    By \cref{lemma:countingSg}, $s_{t_1,t_2}(n,g)$ depends only on $\nu(t_1)$ and $\nu(t_2)$, so that it will be useful to break the sum in \cref{eq:sum_g} according to the $p$-valuations of $t_1,t_2$. For every $j=0,\dots,k-1$, let \[ T_j=\{ t\in T\mid \nu(t)=j \}\quad \text{and}\quad T_{\geq j}=\{t\in T\mid \nu(t)\geq j\}.\]
	In the following lemma, we count how many $t\in T$ have the same $p$-valuation. 
	
	\begin{lemma}\label{lemma:euler}
		In the notation introduced above, $T$ is the disjoint union \[ T= \{0\} \sqcup %\biggl(
		\bigsqcup_{j=0}^{k-1}T_j
		%\biggr)
		\qquad \text{and} \qquad |T_j|=\biggl(1-\frac{1}{p}\biggr)p^{k-j} \] for every $j=0,\dots,k-1$. In particular,  $|T_{\geq j}|=p^{k-j}$.
	\end{lemma}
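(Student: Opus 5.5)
The plan is to count elements of $T=\{0,\dots,p^k-1\}$ directly by their $p$-adic valuation. With the convention $\nu(0)=\infty$, the element $0$ is the only one of infinite valuation. Every nonzero $t\in T$ satisfies $1\le t<p^k$, so its valuation lies in $\{0,\dots,k-1\}$. This gives the disjoint decomposition $T=\{0\}\sqcup\bigsqcup_{j=0}^{k-1}T_j$; the sets are pairwise disjoint because $\nu$ is a function.

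Next I compute $|T_{\ge j}|$ for $0\le j\le k-1$. An element $t\in T$ has $\nu(t)\ge j$ exactly when $p^j\mid t$; this includes $t=0$, since $\nu(0)=\infty$. The multiples of $p^j$ in $\{0,\dots,p^k-1\}$ are $p^j r$ with $0\le r\le p^{k-j}-1$, so $|T_{\ge j}|=p^{k-j}$. This holds for $j=k$ as well, where $T_{\ge k}=\{0\}$ and $|T_{\ge k}|=1=p^0$.

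Finally, since $T_j=T_{\ge j}\setminus T_{\ge j+1}$ and $T_{\ge j+1}\subseteq T_{\ge j}$, we get
\[ |T_j|=p^{k-j}-p^{k-j-1}=\biggl(1-\frac{1}{p}\biggr)p^{k-j}. \]
As a consistency check, summing over $j$ gives $1+\sum_{j=0}^{k-1}(p^{k-j}-p^{k-j-1})=1+(p^k-1)=p^k=|T|$.

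The argument is entirely elementary. The only step needing attention is the bookkeeping of the element $0$: it must be counted in every $T_{\ge j}$ but in no $T_j$. This is why the difference formula works uniformly for every $j$ up to $k-1$.
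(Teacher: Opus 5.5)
Your proof is correct, but it runs in the opposite direction from the paper's. The paper first computes $|T_j|$ by the bijection $t\mapsto t/p^j$ from $T_j$ onto the integers in $[1,p^{k-j}]$ coprime to $p^{k-j}$. This gives $|T_j|=\varphi(p^{k-j})=p^{k-j}-p^{k-j-1}$ by Euler's totient formula. The paper then obtains $|T_{\geq j}|=p^{k-j}$ by writing $T_{\geq j}=T_j\sqcup\cdots\sqcup T_{k-1}\sqcup\{0\}$ and telescoping the sum.

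You instead count $T_{\geq j}$ directly as the multiples of $p^j$ in $\{0,\dots,p^k-1\}$, and recover $|T_j|$ as the difference $|T_{\geq j}|-|T_{\geq j+1}|$. Your route is more self-contained: the value $\varphi(p^a)=p^a-p^{a-1}$ that the paper quotes is usually proved by exactly your count of multiples. It also makes the bookkeeping of $0$ (in every $T_{\geq j}$, in no $T_j$) explicit through the convention $\nu(0)=\infty$ and the case $T_{\geq k}=\{0\}$. The paper's route produces the per-valuation count $|T_j|$ first, which is the quantity used in the later sums, but it needs the totient formula and a summation to get back to $|T_{\geq j}|$. The mathematical content of the two arguments is the same elementary count.
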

	\begin{proof}
		For every $m\in\N_0$, denote $\mathcal{T}_m=\{ i\mid 1\leq i\leq m, \mathrm{gcd}(i,m)=1 \}$, so that $|\mathcal{T}_m|=\vphi(m)$, where $\vphi$ is the Euler's totient function. For every $j=0,\dots,k-1$, let us define
        \begin{align*}
			f_j\colon T_j&\to \mathcal{T}_{p^{k-j}}\\
			t&\mapsto t/p^j.
		\end{align*}
		Since every $t\in T_j$ can be written as $t=p^ji$ with $\mathrm{gcd}(i,p)=1$ and $1\leq i\leq p^{k-j}$, the map $f_j$ is a bijection. Therefore, \[ |T_j|=|\mathcal{T}_{p^{k-j}}|=p^{k-j}-p^{k-j-1}=\biggl(1-\frac{1}{p}\biggr)p^{k-j} \] for every $j=0,\dots,k-1$. Now, note that $T_{\geq j}=T_j\sqcup\cdots\sqcup T_{k-1}\sqcup \{0\}$. Then \[ |T_{\geq j}|=\sum_{m=j}^{k-1}|T_m|+|\{0\}|=\sum_{m=j}^{k-1}(p^{k-m}-p^{k-m-1})+1=p^{k-j}. \qedhere\]
	\end{proof}
        
	Since we will make many estimates, we present four lemmas in advance. The first two involve some arithmetic functions that will appear as certain exponents, while in the third and fourth we compute some estimates of the parts into which we will break the sum according to the $p$-valuations of $t_1$ and $t_2$. 
    \begin{lemma}\label{lemma:arith}
		Let $f_1,f_2\colon\N_0^2\to\Z$ be the functions defined by \[ f_1(i,j)=(n-1)p^j+p^i-i,\qquad f_2(i,j)=np^j-i. \] Let $f(j)=f_2(j,j)=np^j-j$. Then the following holds:
        \begin{enumerate}
            \item[(i)] If $n=1$, then $f_1(i,j)=f(i)$ for every $i,j$ and $f_2(i,j)\leq f(j)$ whenever $i\geq j$;
            \item[(ii)] If $n\geq 2$, then $f_1(i,j)\leq f(j)$ whenever $i\leq j$ and $f_2(i,j)\leq f(j)$ whenever $i\geq j$. 
        \end{enumerate} 
	\end{lemma}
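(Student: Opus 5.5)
This is a direct computation from the definitions $f_1(i,j)=(n-1)p^j+p^i-i$, $f_2(i,j)=np^j-i$ and $f(j)=np^j-j$; no earlier results are needed beyond elementary inequalities. I would treat the claims about $f_2$ and $f_1$ separately, since they need different arguments.

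First, the claim about $f_2$, which appears in both (i) and (ii) and does not depend on $n$. If $i\geq j$, then
\[ f_2(i,j)=np^j-i\leq np^j-j=f(j). \]

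Next, case (i), $n=1$. Then $f_1(i,j)=0\cdot p^j+p^i-i=p^i-i$. Also $f(i)=1\cdot p^i-i$, so $f_1(i,j)=f(i)$ for all $i,j$.

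Finally, the $f_1$ claim in case (ii), $n\geq 2$ and $i\leq j$. Compute
\[ f(j)-f_1(i,j)=np^j-j-(n-1)p^j-p^i+i=p^j-p^i-(j-i). \]
So it suffices to show $p^j-p^i\geq j-i$ whenever $i\leq j$. If $i=j$ both sides vanish. If $i<j$, then
\[ p^j-p^i=p^i(p^{j-i}-1)\geq p^{j-i}-1\geq j-i, \]
using $p^i\geq 1$ and Bernoulli's inequality $p^d\geq 1+d(p-1)\geq 1+d$ for $d=j-i\geq 0$. Hence $f_1(i,j)\leq f(j)$. The hypothesis $n\geq 2$ plays no role in this computation; it only separates the case from (i), where the comparison is with $f(i)$ instead.

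The only step with any content is the inequality $p^j-p^i\geq j-i$, which rests on $p^d-1\geq d$. If one were worried about $i=0$ or $j=0$ in $\N_0$, the same argument covers it, since $p^0=1$.
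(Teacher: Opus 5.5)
Your proof is correct and takes essentially the same approach as the paper. The paper notes that $i\mapsto p^i-i$ is increasing on $\N_0$ and substitutes this into $f_1(i,j)=(n-1)p^j+(p^i-i)$; that is exactly your inequality $p^j-p^i\ge j-i$, which you justify explicitly via Bernoulli. The $f_2$ claim and the $n=1$ case are the same one-line checks that the paper leaves as following ``similarly''.
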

	\begin{proof}
		%If $i\geq j$, then \[ f_2(i,j)=np^j-i\leq np^j-j=f(j). \] 
    Assume that $n\geq 2$. Note that $p^i-i$ is increasing if $i\geq 0$.  If $i\leq j$, then \[ f_1(i,j)=(n-1)p^j+(p^i-i)\leq (n-1)p^j+(p^j-j)= np^j-j=f(j).\] 
    The other cases follow similarly.
	\end{proof}

    \begin{lemma}\label{lem:hyperexp}
        Let $M,N\in\N_0$. Then the following holds:
\[            
 %\[\sum_{i=0}^Np^{p^{i}-i}\leq 2p^{p^N-N};\]
           \mathrm{(i)}\ \ \sum_{i=0}^Np^{p^{i+M}-i}\leq 2p^{p^{N+M}-N}; \qquad
            \mathrm{(ii)}\ \ %\[\sum_{i=0}^Np^{p^i-2i}\leq 4p^{p^N-2N}.\]
              \sum_{i=0}^Np^{p^{i+M}-2i}\leq 4p^{p^{N+M}-2N}. 
\]
        %Moreover, for every $M\in\N_0$, we have
        %\[ \sum_{i=0}^Np^{p^{i+M}-i}\leq 2p^{p^{N+M}-N}\quad\text{and}\quad \sum_{i=0}^Np^{p^{i+M}-2i}\leq 4p^{p^{N+M}-2N}. \]
    \end{lemma}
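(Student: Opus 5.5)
The natural approach is to compare each sum with a geometric series. The last term dominates, and the ratio between consecutive terms, read from the top down, is very small.

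For (i), write $a_i=p^{p^{i+M}-i}$. For $0\le i<N$ we have
\[ \frac{a_{i+1}}{a_i}=p^{p^{i+1+M}-p^{i+M}-1}=p^{p^{i+M}(p-1)-1}. \]
Since $p\ge 2$ and $p^{i+M}\ge 1$, the exponent is at least $p^{i+M}(p-1)-1\ge (p-1)-1\ge 0$. In fact it is at least $1$, except in the single case $p=2$, $i+M=0$, where it equals $0$. This exceptional case needs care. Apart from it, $a_i\le a_{i+1}/p\le a_{i+1}/2$, so by downward induction $a_i\le 2^{-(N-i)}a_N$. Summing gives
\[ \sum_{i=0}^N a_i\le a_N\sum_{r\ge0}2^{-r}=2a_N, \]
which is the bound claimed in (i).

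For (ii), set $b_i=p^{p^{i+M}-2i}$. The ratio is then $p^{p^{i+M}(p-1)-2}$, whose exponent is at least $p^{i+M}(p-1)-2$. If this exponent is at least $1$, then $b_i\le b_{i+1}/2$ and the same argument yields a bound of $2b_N$, which is better than the stated $4b_N$. The exponent can fail to be at least $1$ only when $p^{i+M}(p-1)\le 2$, which leaves three cases:
\begin{itemize}
\item $p=2$ and $i+M=0$: the ratio is $2^{-1}$.
\item $p=2$ and $i+M=1$: the ratio is $2^{0}=1$.
\item $p=3$ and $i+M=0$: the ratio is $3^{0}=1$.
\end{itemize}
All of these occur only for $i\le 1$ (indeed only for $M\le 1$). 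In other words, the ratios are at least $2$ for all $i\ge 2$, and for $i\ge 1$ when $p\ge 5$.

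So the plan for both parts is to split the sum into the tail $i\ge 2$, which is controlled by the geometric bound $\le 2b_N$ (or $\le 2a_N$), and the at most two initial terms $i=0,1$. These initial terms must then be absorbed into the remaining slack, using the fact that $b_0,b_1\le b_2\le\cdots$ in the degenerate cases and that the ratios further up are large.

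The main obstacle is exactly these small-$p$, small-index cases. When $N$ is small there is little room. For example, with $p=2$, $M=0$, $N=1$ the terms are $b_0=2$ and $b_1=2^{0}=1$, so the sum is $3$ while $4b_1=4$. With $N=0$ the bound is trivial. For (i) with $p=2$, $M=0$, $N=1$ the terms are $a_0=2$ and $a_1=2$, so the sum is $4=2a_1$, and the bound holds with equality.

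I would therefore handle $N\le 2$ (and $M\le 1$, $p\in\{2,3\}$) by direct enumeration. For $N\ge 3$ the top ratio is at least $p^{p^{2}(p-1)-2}\ge 4$. This top-level gain supplies enough slack to absorb the bounded initial terms into the constant $2$ or $4$ respectively: the degenerate initial terms are each at most $b_2$, hence at most $b_N/4^{\,N-2}$.
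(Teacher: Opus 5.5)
Your strategy (compare consecutive terms, dominate by a geometric series, and treat the few irregular low-index terms separately) is the right one. However, the step meant to absorb the irregular terms is false as written. You assert that in the degenerate cases $b_0,b_1\le b_2$, so that each initial term is at most $b_2\le b_N/4^{N-2}$. For $p=2$, $M=0$ this fails: $b_0=2$, $b_1=1$, $b_2=2^{4-4}=1$, so $b_0=2b_2$. You had in fact listed the ratio $b_1/b_0=2^{-1}$ yourself. This matters because the constants are sharp: for $p=2$, $M=0$, $N=2$ one has $b_0+b_1+b_2=4=4b_2$ and $a_0+a_1+a_2=8=2a_2$. So an appeal to ``enough slack'' is not a proof, and an explicit count is needed. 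Likewise, in (i) you flag the case $p=2$, $M=0$ as needing care but never settle it. Bounding $a_1+\dots+a_N$ by the infinite series $a_N\sum_{r\ge0}2^{-r}=2a_N$ already uses up all the room, leaving nothing for $a_0$.

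Both points are repaired by keeping the finite geometric sums.

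In (i) with $p=2$, $M=0$, the ratios $a_{i+1}/a_i$ are at least $2$ for $i\ge1$. Hence $\sum_{i=1}^N a_i\le(2-2^{1-N})a_N$, and $a_0=a_1\le 2^{1-N}a_N$ fills exactly the remaining room.

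In (ii), the degenerate cases $p=2$, $M=1$ and $p=3$, $M=0$ have $b_0=b_1$ and all later ratios at least $2$, so the same computation gives $2b_N$. For $p=2$, $M=0$ and $N\ge2$, use $b_0+b_1=3b_2\le 3\cdot2^{2-N}b_N$ together with $\sum_{i=2}^N b_i\le(2-2^{2-N})b_N$. This gives a total of at most $(2+2^{3-N})b_N\le4b_N$, and $N=1$ is the check $b_0+b_1=3\le4=4b_1$. No enumeration is then needed.

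The paper packages the same idea as an induction on $N$. After checking $N\le1$ (resp.\ $N\le2$) by hand, the inductive step only requires $a_{N+1}\ge2a_N$ for $N\ge1$ (resp.\ $b_{N+1}\ge\frac43b_N$ for $N\ge2$). That is, $p^{N+M}(p-1)\ge1+\log_p2$ (resp.\ $\ge2+\log_p\frac43$). This confines the irregular low-index behaviour to the base cases, so no slack has to be tracked.
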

    % \begin{proof}
    %     We first use induction on $N$ to show that $\sum_{i=0}^Np^{p^{i}-i}\leq 2p^{p^N-N}$. The cases $N=0$ and $N=1$ are easy to check. If $N\geq 1$, by inductive hypothesis we have
    %     \[ \sum_{i=0}^{N+1}p^{p^i-i}\leq 2p^{p^N-N}+p^{p^{N+1}-(N+1)}. \]
    %     Then, we need to show that $p^{p^{N+1}-(N+1)}\geq 2p^{p^N-N}$. Taking the logarithms in base $p$, the previous inequality is equivalent to 
    %     \[ p^N\geq \frac{\log_p(2)+1}{p-1}, \]
    %     which is true since $p^N\geq p\geq\frac{\log_p(2)+1}{p-1}$. Therefore,
    %     \[\sum_{i=0}^Np^{p^{i+M}-i}\leq p^M\sum_{i=0}^{M+N}p^{p^i-i}\leq 2p^Mp^{p^{M+N}-(N+M)}=2p^{p^{N+M}-N},\]
    %     which proves (i).

    %     To prove (ii) %\comment{Solo dimostrerei (ii) e direi che (i) è più facile}
    %     we argue in a similar way. First, we prove by induction on $N$ that $\sum_{i=0}^Np^{p^i-2i}\leq 4p^{p^N-2N}$. As before, the cases $N=0$, $N=1$ and $N=2$ can be checked by hand. If $N\geq 2$, by inductive hypothesis we have 
    %     \[ \sum_{i=0}^{N+1}p^{p^i-2i}\leq 4p^{p^N-2N}+p^{p^{N+1}-2(N+1)}, \] and hence we need to show that $p^{p^{N+1}-2(N+1)}\geq \frac{4}{3}p^{p^N-2N}$. As in the first item, this is equivalent to
    %     \[ p^N\geq \frac{\log_p(\frac{4}{3})+2}{p-1}, \]
    %     which is true since $p^N\geq p^2\geq\frac{\log_p(\frac{4}{3})+2}{p-1}$. Therefore, 
    %     \[ \sum_{i=0}^Np^{p^{i+M}-2i}\leq p^{2M}\sum_{i=0}^{M+N}p^{p^i-2i}\leq 4p^{2M}p^{p^{M+N}-2(N+M)}=4p^{p^{N+M}-2N}.\qedhere \]
    % \end{proof}
    \begin{proof}
        We will prove (i), respectively (ii), by induction on $N$. The cases $N=0$ and $N=1$, respectively $N=0$, $N=1$, and $N=2$, are easy to check by hand. If $N\geq 1$, respectively $N\ge 2$, by inductive hypothesis we have
        \[ \sum_{i=0}^{N+1}p^{p^{i+M}-i}\leq 2p^{p^{N+M}-N}+p^{p^{N+M+1}-(N+1)}, \]
        respectively
        \[ \sum_{i=0}^{N+1}p^{p^{i+M}-2i}\leq 4p^{p^{N+M}-2N}+p^{p^{N+M+1}-2(N+1)}.\]
        Therefore, we only need to check that $p^{p^{N+M+1}-(N+1)}\geq 2p^{p^{N+M}-N}$, respectively $p^{p^{N+M+1}-2(N+1)}\geq \frac{4}{3}p^{p^{N+M}-2N}$. Taking logarithms in base $p$ in both sides, the previous inequality is equivalent to $p^{N+M}\geq (\log_p(2)+1)/(p-1),$ respectively $p^{N+M}\geq (\log_p(\frac{4}{3})+2)/(p-1),$
        which is true since $p^{N+M}\geq p\geq (\log_p(2)+1)/(p-1)$, respectively $p^{N+M}\geq p^2\geq(\log_p(\frac{4}{3})+2)/(p-1)$.
    \end{proof}
    %{\color{violet} \begin{remark}
    %    Let $\beta>0$ such that $\sum_{i=0}^Np^{p^i-2i}\leq (1+\beta)p^{p^N-2N}$. Take $N=2$ and consider \[ p+p^{p-2}+p^{p^2-4}\leq (1+\beta)p^{p^2-4} \] for some $\beta>0$. The previous is equivalent to \[ p+p^{p-2}\leq \beta p^{p^2-4}. \] If $p=2$, then we have $2+1\leq \beta$, that is, $\beta \geq 3$.
    %\end{remark} }
    %In order to estimate finite sums involving exponentials, we will also use the following elementary observation. Let $N\geq 0$ and let $b_i\in\mathbb{R}$ for $i=0,\dots, N$. Let $\bar{i}\in\mathrm{argmax}\{b_i\mid i=0,\dots,N\}$, that is, $b_i\leq b_{\bar{i}}$ for every $i=0,\dots, N$. For every $a>0$, we have
%\[ a^{b_{\bar{i}}} \leq \sum_{i=0}^Na^{b_i} \leq (N+1)a^{b_{\bar{i}}}. \]
    
	In what follows, let 
	\[ S_0(n,g) :=\sum_{t_1\in T}s_{t_1,0}(n,g)+\sum_{t_2\in T, t_2\neq 0}s_{0,t_2}(n,g) \]
	and
	\[ S_1(n,g) := \sum_{t_1,t_2\in T}s_{t_1,t_2}(n,g)- S_0(n,g). \]
 In the following lemmas we will use \cref{lemma:arith} without explicitly mentioning it. Recall that $\delta_1(g)$ is the characteristic function of $1$: $\delta_1(1)=1$ and $\delta_1(g)=0$ for $1\neq g\in G$; we also use the convention $l(1)=k$.

\begin{lemma}\label{lemma:boundsS0}
		Suppose that $n=p^m$ for some $0\leq m\leq k-1$. Let $g\in W_k$ be such that $l=l(g)\ge m+1$ (if $g=1$, take $l=k$). Then 
%       {\color{blue}
%        $$
%		\frac{\delta_1(g)}{p^{(1+\delta_0(m))k}}+\frac{c_0(n,p)p^{p^{l-1}-l}}{p^{p^k+k}}\leq \frac{S_0(n,g)}{p^{2p^k+2k}}\leq \frac{2\delta_1(g)}{p^{(1+\delta_0(m))k}}+\frac{c_0(n,p)(l-m)p^{p^{l-1}-l}}{p^{p^k+k}},
%        $$
%        where $c_0(n,p)=n(1+\delta_0(m))(p-1)$.
%        }
        \begin{align*}
		\frac{\delta_1(g)}{p^{(1+\delta_0(m))k}}+ n(p-1)        (1+\delta_0(m)) \frac{p^{p^{l-1}-l}}{p^{p^k+k}}&\leq \frac{S_0(n,g)}{p^{2p^k+2k}} \\
        &\le
        %(2-\delta_0(m)) 
        \frac{2\delta_1(g)}{p^{(1+\delta_0(m))k}}+ 4 n(p-1) \frac{p^{p^{l-1}-l}}{p^{p^k+k}}.
        \end{align*}
      %  \begin{multline*}
		%\frac{\delta_1(g)}{p^{(1+\delta_0(m))k}}+ n(p-1) \frac{p^{p^{l-1}-l}}{p^{p^k+k}}\leq 
       % \frac{S_0(n,g)}{p^{2p^k+2k}}\leq \frac{2\delta_1(g)}{p^{(1+\delta_0(m))k}}+ {\color{violet}2(1+\delta_0(m))}n(p-1) \frac{p^{p^{l-1}-l}}{p^{p^k+k}}.
       % \end{multline*}
        %\commatteo{the first one is with more precise constants, the second more concise}
       % \comtommaso{I have written in violet $2(1+\delta_0(m))$ instead of $4$}
	\end{lemma}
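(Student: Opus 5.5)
Recall that $S_0(n,g)$ collects the terms with $t_1=0$ or $t_2=0$; write it as $s_{0,0}(n,g)+\sum_{t_1\neq 0}s_{t_1,0}(n,g)+\sum_{t_2\neq0}s_{0,t_2}(n,g)$ and evaluate each piece with \cref{lemma:countingSg} in its valuation form. We use $\nu(0)=\infty$ throughout.

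\emph{Term $s_{0,0}$.} Here $\varphi_0$ is trivial, so $s_{0,0}(n,g)=|B|^2\delta_1(g)=p^{2p^k}\delta_1(g)$.

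\emph{Terms with $t_2=0$, $t_1\neq0$.} We have $\nu(t_1)<\nu(t_2)=\infty$, so we are in the first case of \cref{lemma:countingSg}. Since $\vphi_0^{n-1}$ is trivial when $n\ge2$, we get $\deg(g)\ge\infty$, hence $g=1$, and the term equals $p^{2p^k}$. When $n=1$ we get $s_{t_1,0}=|B||\mathrm{Ker}\vphi_{t_1}|$ with the condition $\deg(g)\ge p^{\nu(t_1)}+1$. So the $t_2=0$ sum contributes $\delta_1(g)(p^k-1)p^{2p^k}$ when $m\ge1$. It contributes a genuine sum over $t_1$ when $m=0$.

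\emph{Terms with $t_1=0$, $t_2\neq0$.} Now $\nu(t_1)>\nu(t_2)=j$, so $s_{0,t_2}=p^{p^k+np^j}$ (capped at $p^{2p^k}$) whenever $\deg(g)\ge np^j+1$, i.e.\ whenever $np^j<\deg(g)$. Using $n=p^m$, for $g\ne1$ this holds exactly for $j\le l-m-1$, and also for $j=l-m$ when $\deg(g)>p^l$. The latter is impossible, so the range is $j\le l-m-1$. For $g=1$ every $j$ contributes, and for $j\ge k-m$ the term is capped. Combining with \cref{lemma:euler},
\[
\sum_{t_2\ne0}s_{0,t_2}(n,g)=\sum_{j=0}^{l-m-1}\Bigl(1-\tfrac1p\Bigr)p^{k-j}p^{p^k+p^{j+m}}+\delta_1(g)(\text{capped terms}),
\]
where the capped terms number $|T_{\ge k-m}|-1=p^m-1$ and each equals $p^{2p^k}$.

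Next divide everything by $p^{2p^k+2k}$.

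\emph{Lower bound.} Keep $s_{0,0}$ and only the top term $j=l-m-1$. That term is
\[
(p-1)p^{k-l+m}p^{p^{l-1}}\,/\,p^{p^k+2k}=n(p-1)p^{p^{l-1}-l}/p^{p^k+k}.
\]
For $m=0$ the symmetric sum over $t_1$ gives the same top term again (with $i=l-1$), which produces the factor $1+\delta_0(m)$. For the $\delta_1$ term with $m\ge1$, $s_{0,0}$ alone gives $p^{-2k}\ge p^{-(1+\delta_0(m))k}$. For $m\ge1$ we use the $p^k$ terms with $t_2=0$, giving $p^{-k}$, which is the required $\delta_1(g)/p^{k}$.

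\emph{Upper bound.} The $\delta_1$ contributions total at most
\[
\bigl(1+(p^k-1)+(p^m-1)\bigr)p^{-2k}\le 2p^{-k}\quad(m\ge1).
\]
For $m=0$ they are the capped terms, at most $2p^{-2k}$. The main sum is
\[
\frac{(p-1)}{p^{p^k+k}}\sum_{j=0}^{l-m-1}p^{p^{j+m}-j}.
\]
By \cref{lem:hyperexp}(i) with $M=m$, $N=l-m-1$, this is at most $2(p-1)p^{p^{l-1}-(l-m-1)}/p^{p^k+k}=2pn(p-1)p^{p^{l-1}-l}/p^{p^k+k}$. Here a factor $p$ appears to be lost, so we must be careful: $p^{-j}$ versus $p^{-(l-m-1)}$ costs a factor $p^{m+1}/p^{m}$.

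Redoing the count gives the exponent $p^{j+m}-j+k$ minus $2k$, with top value $p^{l-1}-l+m+1-k$. So the top term is $(1-\frac1p)\,p\,n\,p^{p^{l-1}-l}$, which equals $n(p-1)p^{p^{l-1}-l}$, consistent with the lower bound. The geometric-type bound then gives a factor $2$, doubled to $4$ for $m=0$ to cover the $t_1$-sum (with possible $g=1$ overflow handled by the cap). This matches the stated constant $4$.

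The main obstacle is the bookkeeping at the boundary cases: deciding exactly which $j$ satisfy $np^j<\deg(g)$ (using $p^{l-1}<\deg(g)\le p^l$), the cap $p^{2p^k}$ when $g=1$, and keeping the $m=0$ case separate, where both the $t_1=0$ and $t_2=0$ families contribute nontrivially.
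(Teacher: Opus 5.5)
Your proposal is correct and is essentially the paper's proof. You split $S_0(n,g)$ by the $p$-valuation of the nonzero index via \cref{lemma:countingSg} and \cref{lemma:euler}, and isolate the $\delta_1(g)$ part, which is $p^{2p^k}$ for $m=0$ and $(p^k+p^m-1)p^{2p^k}$ for $m\ge1$. For the lower bound you keep the top term $j=l-m-1$ (twice when $m=0$), and for the upper bound you apply \cref{lem:hyperexp}(i) with $M=m$, $N=l-m-1$.

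Two slips should be cleaned up. First, the sentence saying that $s_{0,0}$ alone gives $p^{-2k}$ concerns $m=0$, not $m\ge1$. Second, the prefactor of the main sum is $(1-\frac1p)$, not $(p-1)$. That wrong prefactor, and not the $p^{-j}$ versus $p^{-(l-m-1)}$ comparison, is the source of the spurious factor $p$. With the correct prefactor, the bound $2n(p-1)p^{p^{l-1}-l}/p^{p^k+k}$ follows directly.
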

    \begin{proof}
        Note that $s_{t,0}(1,g)=s_{0,t}(1,g)$ for every $t\in T$. By \cref{zeros1}, $s_{t,0}(1,g)=0$ if and only if $\nu(t)\geq l$ and $g\neq 1$. Therefore, by \cref{lemma:countingSg} and \cref{lemma:euler} we have
        \begin{equation}\label{eq:S01}
            S_0(1,g) = \delta_1(g)s_{0,0}(1,1) + 2\sum_{i=0}^{l-1}|T_i|s_{p^i,0}(1,g)
        = \delta_1(g)p^{2p^k} + 2\biggl(1-\frac{1}{p}\biggr)p^{p^k+k}\sum_{i=0}^{l-1}p^{f(i)}
        \end{equation}
		  where $f(i)=p^i-i$. Since $f(i)\leq f(l-1)=p^{l-1}-l+1$ for every $i=0,\dots,l-1$, using \cref{lem:hyperexp}, it follows that
		\[
        2(p-1)p^{p^{l-1}-l}
        \le 2\biggl(1-\frac{1}{p}\biggr)\sum_{i=0}^{l-1}p^{f(i)} \le  4 \left(p-1\right) p^{p^{l-1}-l}.
        \]
        Note that the lower bound is obtained by considering the term corresponding to $i=l-1$ in \cref{eq:S01}. Therefore, we obtain
        \[
         \frac{\delta_1(g)}{p^{2k}}+\frac{2(p-1)p^{p^{l-1}-l}}{p^{p^k+k}}
        \le \frac{S_0(1,g)}{p^{2p^k+2k}}
        \le
        \frac{\delta_1(g)}{p^{2k}}+\frac{4(p-1)p^{p^{l-1}-l}}{p^{p^k+k}}.
        \]
        Now, assume $n\geq 2$. By \cref{zeros}, $s_{t_1,0}(1,g)=0$ and $s_{0,t_2}(1,g)=0$ if $t_1\in T$, $t_2\in T_{\geq l-m}$ and $g\neq 1$. Thus, by \cref{lemma:countingSg}, \cref{lemma:s_identity} and \cref{lemma:euler} we have
        \begin{align}\label{eq:S0n}  
        S_0(n,g)
        &= \delta_1(g)\biggl(\sum_{t_1\in T}s_{t_1,0}(n,1)+\sum_{t_2\in T_{\geq k-m}\smallsetminus\{0\}}s_{0,t_2}(n,1)\biggr)+\sum_{j=0}^{l-m-1}|T_j|s_{0,p^j}(n,g)\notag\\
        %\delta_1(g)(|T|+|T_{\geq k-m}|-1)p^{2p^k}+\sum_{j=0}^{l-m-1}|T_j|s_{0,p^j}(n,g)\\
        &=\delta_1(g)p^{2p^k}(p^{k}+p^{m}-1)+\biggl(1-\frac{1}{p}\biggr)p^{p^k+k}\sum_{j=0}^{l-m-1}p^{f(j)},
        \end{align}
		  where $f(j)=np^j-j$. Since $f(j)\le f(l-m-1)=p^{l-1}-l+m+1$, we have
        \[
        n(p-1)p^{p^{l-1}-l}\le
        \biggl(1-\frac{1}{p}\biggr)\sum_{j=0}^{l-m-1}p^{f(j)}\le 2n(p-1) p^{p^{l-1}-l}.
        \]
        Note that the lower bound is obtained by considering the term corresponding to $j=l-m-1$ in \cref{eq:S0n} and that we have used \cref{lem:hyperexp} in the upper bound. 
        Therefore, 
         \begin{align*}
        \frac{\delta_1(g)(1+p^{m-k}-p^{-k})}{p^{k}}
        &+
        \frac{n(p-1)p^{p^{l-1}-l}}{p^{p^k+k}}
        \le \frac{S_0(n,g)}{p^{2p^k+2k}}\\
        &\le
        \frac{\delta_1(g)(1+p^{m-k}-p^{-k})}{p^{k}}
        +
        \frac{2n(p-1)p^{p^{l-1}-l}}{p^{p^k+k}}. \qedhere
        \end{align*}
    \end{proof}

\begin{lemma}\label{lemma:boundsS1}
		Suppose that $n=p^m$ for some $0\leq m\leq k-1$. Let $g\in W_k$ be such that $l=l(g)\ge m+1$ (if $g=1$, take $l=k$). Then,
        \begin{multline*}
           \left(1+\delta_0(m)\frac{2}{p}\right)n^2 (p-1)^2 \frac{p^{p^{l-1}-2l}}{p^{p^k}} + \delta_1(g) h(k,m) \le \frac{S_1(n,g)}{p^{2p^k+2k}}\\
           \le n^2 (p-1)^2 \frac{(8k+4) p^{p^{l-\delta_0(m)}-2l}}{p^{p^k}}  + \delta_1(g) h(k,m)
    \end{multline*}

where $$ h(k,m)=  \frac{ (p-1) p^{m-1}(p^m-1) + (1-\delta_0(m)) (p^{m-1}-1)(p^k-1)}{p^{2k}}.$$
\end{lemma}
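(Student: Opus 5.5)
Restricted to $t_1,t_2\neq0$, we have $S_1(n,g)=\sum_{i,j=0}^{k-1}|T_i||T_j|\,s_{p^i,p^j}(n,g)$. Here we use \cref{lemma:countingSg}: $s_{t_1,t_2}$ depends only on $\nu(t_1),\nu(t_2)$ and $\deg(g)$. By \cref{lemma:euler}, $|T_i||T_j|=(1-\frac1p)^2p^{2k-i-j}$. The valuation form of \cref{lemma:countingSg} gives
\[
s_{p^i,p^j}(n,g)=\min\{p^{p^k+(n-1)p^j+p^i},p^{2p^k}\}\quad (i\le j),
\qquad
s_{p^i,p^j}(n,g)=\min\{p^{p^k+np^j},p^{2p^k}\}\quad (i>j),
\]
whenever the degree condition holds, and $0$ otherwise. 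After dividing by $p^{2p^k+2k}$, the non-saturated terms become $(1-\frac1p)^2p^{f_1(i,j)-j-p^k}$ or $(1-\frac1p)^2p^{f_2(i,j)-j-p^k}$, with $f_1,f_2$ as in \cref{lemma:arith}.

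I will split the index set into two parts.

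\textbf{Part A.} Pairs for which the degree condition forces vanishing when $g\ne1$. These are exactly the pairs in \cref{zeros}(i),(ii) for $m\ge1$, and those in \cref{zeros1} for $m=0$.

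For $g=1$ (so $l=k$), part A is where the minimum saturates. By \cref{lemma:s_identity}, this happens when $j\ge k-m+1$, or when $j=k-m$ and $i\ge k-m$. Counting these pairs with \cref{lemma:euler} (excluding $0$, so $|T_{\ge j}\smallsetminus\{0\}|=p^{k-j}-1$) should produce the term $\delta_1(g)h(k,m)$ exactly.

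Concretely:
\begin{itemize}
\item $\#\{i\ge k-m\}\cdot\#\{j\ge k-m\}$ contributes $(p^m-1)^2$ pairs.
\item The pairs with $j\ge k-m+1$ and $i<k-m$ contribute $(p^{k}-p^m)(p^{m-1}-1)$.
\end{itemize}
I would regroup these into the stated numerator $(p-1)p^{m-1}(p^m-1)+(p^{m-1}-1)(p^k-1)$. This bookkeeping needs care. For $m=0$ only the $j\ge k$ pairs would saturate, and there are none. I also need to check that part A is excluded from the $g\ne1$ case, where it contributes zero. For $g=1$, the remaining non-saturated terms are bounded by the same general estimates with $l=k$.

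\textbf{Part B.} The remaining pairs, with $j\le l-m$ and the degree condition satisfied.

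\emph{Lower bound.} I pick a single dominant term. For $m\ge1$, take $j=l-m-1$. By \cref{zeros}, this term is nonzero for every $i$. Choosing $i=j$ gives exponent $f(j)-j-p^k=np^{l-m-1}-2(l-m-1)-p^k=p^{l-1}-2l+2m+2-p^k$. Together with $|T_j|^2$, this yields $n^2(p-1)^2p^{p^{l-1}-2l}/p^{p^k}$, up to checking the constants (the factor $p^{2m}=n^2$ and $p^{2}(1-\frac1p)^2=(p-1)^2$). For $m=0$, \cref{zeros1} allows $\min(i,j)\le l-1$. I take the three pairs $(l-1,l-1)$, $(l-1,l)$ and $(l,l-1)$: the two extra terms carry an extra factor $\frac1p$ each, which gives the coefficient $1+\frac2p$.

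\emph{Upper bound.} By \cref{lemma:arith}, every exponent is at most $f(j)-j$ (or $f(i)-i$ in the $m=0$ symmetric case). Summing over $i$ costs at most a factor of $k$ terms. Summing over $j\le l-m$ with \cref{lem:hyperexp}(ii) bounds $\sum_j p^{np^j-2j}$ by $4p^{np^{l-m}-2(l-m)}$. Note that $np^{l-m}=p^l$ at the boundary $j=l-m$; this is why the exponent in the statement is $p^{l-\delta_0(m)}$ rather than $p^{l-1}$. For $m=0$, $j=l$ is allowed when $i<l$. The factor $8k+4$ should come from two sums (the cases $i\le j$ and $i>j$) of at most $k$ terms each, times $4$, plus one boundary term.

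\textbf{Main obstacle.} I expect the main obstacle to be the exact saturated count producing $h(k,m)$, and the careful handling of the boundary case $j=l-m$ together with the saturation min in the upper bound. I would treat these by splitting $i<j$, $i=j$ and $i>j$ explicitly, and accept loose constants elsewhere.
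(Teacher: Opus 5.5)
Your plan follows the paper's proof in all essentials. You split $S_1(n,g)$ by $(\nu(t_1),\nu(t_2))$ in the same way. The saturated block gives $\delta_1(g)h(k,m)$ as in the paper: your partition $\{i,j\ge k-m\}\cup\{j\ge k-m+1,\ i<k-m\}$ regroups the same pairs, and indeed $(p^m-1)^2+(p^k-p^m)(p^{m-1}-1)=(p-1)p^{m-1}(p^m-1)+(p^{m-1}-1)(p^k-1)$. For $m\ge1$ you use the same dominant term $i=j=l-m-1$ for the lower bound. Your upper bound via \cref{lemma:arith} and \cref{lem:hyperexp} is also correct. You bound every row $j\le l-m$ uniformly instead of isolating the row $j=l-m$, which gives $4kn^2(p-1)^2p^{p^l-2l-2}/p^{p^k}$. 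For $m=0$, note that the nonzero terms are those with $\min(i,j)\le l-1$ and the other index arbitrary in $\{0,\dots,k-1\}$, not ``$j\le l$''. Grouping by $r=\min(i,j)$, with at most $2k-1$ pairs for each $r$, gives $4(2k-1)(p-1)^2p^{p^{l-1}-2l}/p^{p^k}$.

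The genuine gap is the lower bound for $m=0$. The pairs $(l-1,l)$ and $(l,l-1)$ exist only if $l\le k-1$, since every nonzero $t\in T$ has $\nu(t)\le k-1$. The case $l=k$ is imposed by the statement for $g=1$, and also occurs for $g\neq1$ with $\deg(g)>p^{k-1}$. In that case these pairs are not available. Every other term has $\min(i,j)\le k-2$, so for large $k$ their total is negligible next to the $(k-1,k-1)$ term. Hence the factor $1+\frac{2}{p}$ is not merely unproved there; the inequality is false.

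The smallest instance is $p=2$, $k=1$, $m=0$, $g=1$. Then $W_1\cong D_8$ and $S_1(1,1)=s_{1,1}(1,1)=|B|\cdot|\mathrm{Ker}(\vphi_1)|=4\cdot2=8$; these are the $8$ commuting ordered pairs of elements outside $B$. So $S_1(1,1)/p^{2p^k+2k}=\frac{8}{64}=\frac18$, while the claimed lower bound is $2\cdot 2^{2^0-2}/2^{2}=\frac14$. Similarly, for $p=3$, $k=3$ one gets $\frac{3044}{27}\cdot 3^{-27}$ against a claimed $180\cdot 3^{-27}$.

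The paper's proof has exactly the same defect: its off-diagonal lower bound for $n=1$ uses the term $j=l-1$, $i=l$. Since the inequality is false in this case, the statement has to be amended rather than the argument repaired. What your argument actually proves is the stated bound whenever $l\le k-1$. For $m=0$ and $l=k$ it proves the same bound with $1+\frac{2}{p}$ replaced by $1$.

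This does not harm \cref{thm:engelbound_gen}. For $m=0$ and $l=k$, the second term of the lower bound in \cref{lemma:boundsS0} is $2(p-1)p^{p^{k-1}-2k}/p^{p^k}$. That exceeds the missing contribution $\frac{2}{p}(p-1)^2p^{p^{k-1}-2k}/p^{p^k}$.
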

    \begin{proof}
        Let $t_1,t_2\in T\smallsetminus\{0\}$. Then, by \cref{zeros1}, we have $s_{t_1,t_2}(1,g)=0$ if and only if $\min\{\nu(t_1),\nu(t_2)\}\ge l$ and $g\neq 1$, and hence, by \cref{lemma:countingSg},
        \begin{align*}
            &S_1(1,g)=
            \sum_{j=0}^{l-1}|T_j|\biggl(\sum_{i=j}^{k-1}|T_i|s_{p^i,p^j}(1,g)\biggr)
            +
            \sum_{i=0}^{l-1}|T_i|\biggl(\sum_{j=i+1}^{k-1}|T_j|s_{p^i,p^j}(1,g)\biggr) \\[1em]
            &=\,p^{p^k+2k} \left(1-\frac{1}{p}\right)^{\hspace{-2pt}2} \cdot \sum_{i=0}^{l-1}p^{p^i-2i} + 2p^{p^k+2k} \left(1-\frac{1}{p}\right)^{\hspace{-2pt}2} \cdot \sum_{j=0}^{l-1}p^{-j}\biggl(\sum_{i=j+1}^{k-1}p^{f_2(i,j)}\biggr),
        \end{align*}
        where $f_2(i,j)=p^j-i$.
        Similar to the proof of \cref{lemma:boundsS0}, by \cref{lem:hyperexp}, we have
        \[
        (p-1)^2 p^{p^{l-1}-2l}
        \le
        \left(1-\frac{1}{p}\right)^{\hspace{-2pt}2} \cdot \sum_{i=0}^{l-1}p^{{p^i}-2i} \le 4(p-1)^2 p^{p^{l-1}-2l}.
        \]
        
        Similarly, we have $f_2(i,j)\le f_2(l,l-1)=p^{l-1}-l$ for all $ j=0,\ldots,l-1$, $i=j+1,\ldots, k-1$, so we obtain
        \begin{align*}
            2(p-1)^2p^{p^{l-1}-2l-1}
            &\le
            2\left(1-\frac{1}{p}\right)^{\hspace{-2pt}2} \cdot \sum_{j=0}^{l-1}p^{-j}\biggl(\sum_{i=j+1}^{k-1}p^{f_2(i,j)}\biggr)\\
            &=  2\left(1-\frac{1}{p}\right)^{\hspace{-2pt}2} \cdot \sum_{j=0}^{l-1} p^{-j} \sum_{i=j+1}^{k-1} p^{p^{j}-i}
            \\
            & \le 2\left(1-\frac{1}{p}\right)^{\hspace{-2pt}2} \cdot \sum_{j=0}^{l-1} \sum_{i=j+1}^{k-1} p^{p^{j}-2j}\\
            &\le 8k\left(1-\frac{1}{p}\right)^{\hspace{-2pt}2} p^{p^{l-1}-2(l-1)} 
            = 8 k(p-1)^{2} p^{p^{l-1}-2l}
        \end{align*}
        where for the upper bound we have also used that $k-j-1\le k$ for $j=0,\dots, l-1$. Therefore,
\begin{equation*}
            \left(1+\frac{2}{p}\right)\left(p-1\right)^2 \frac{ p^{p^{l-1}-2l}}{p^{p^k}}
            \le
            \frac{S_1(1,g)}{p^{2p^k+2k}}
            \le
            (8k+4) (p-1)^2  \frac{p^{p^{l-1}-2l}}{p^{p^k}}.
\end{equation*}

        For $n\ge 2$, by \cref{zeros} we have
        \begin{multline}
%        \begin{split}
             S_1(n,g)
            =
            \sum_{j=0}^{l-m-1}|T_j|\sum_{i=0}^{k-1}|T_i|s_{p^i,p^j}(n,g)
            +
            \lvert T_{l-m}\rvert\sum_{i=0}^{l-m-1}|T_i|s_{p^i,p^{l-m}}(n,g)\\[1em]
            +
            \delta_1(g)\biggl(|T_{k-m}|\sum_{i=k-m}^{k-1}|T_i|s_{p^i,p^{k-m}}(n,1)
            +
            \sum_{j=k-m+1}^{k-1}\hspace{-2pt}\lvert T_j\rvert \sum_{i=0}^{k-1}|T_i|s_{p^i,p^j}(n,1)\biggr).
%                \end{split}
            \label{eq:empty}
        \end{multline}
        
         We will deal with upper and lower bounds of each of the first and second summands in \cref{eq:empty} one by one, commenting on the choices that we will make to obtain the displayed lower and upper bounds. Then we will calculate exactly the third and fourth summands.
        
        For the lower bound of the first summand in \cref{eq:empty} we choose $i=j=l-m-1$. Thus,
\begin{multline*}
   \sum_{j=0}^{l-m-1}|T_j| \sum_{i=0}^{k-1}|T_i|s_{p^i,p^j}(n,g)  \ge \left( 1-\frac{1}{p} \right)^2 p^{2(k-l+m+1)} p^{p^k+n p^{p^{l-m-1}}} \\
   \ge n^2 (p-1)^2 p^{2k+p^k} p^{p^{l-1}-2l}.
\end{multline*}
%{\color{red} 
%\[
% (p-1)^2 p^{p^k + p^{l-1} + 2k - 2l + 2m} < S \le 4(p-1)^2 p^{p^k + p^{l-1} + 2k - 2l + 2m}
%\]
%}
For the upper bound of the first summand in \cref{eq:empty}, we use Lemma~\ref{lemma:arith} and \cref{lem:hyperexp} to obtain
\begin{align*}
   \sum_{j=0}^{l-m-1}|T_j| &\sum_{i=0}^{l-m-1}|T_i|s_{p^i,p^j}(n,g) \\
   &= \left(p-1\right)^2 p^{p^k+2k} \sum_{j=0}^{l-m-1} \biggl( \sum_{i=0}^j p^{(n-1)p^j+p^i-i-j} + \sum_{i=j+1}^{l-m-1} p^{n p^j-i-j}   \biggr)\\
   &\le \left(p-1\right)^2 p^{p^k+2k} \sum_{j=0}^{l-m-1} \biggl(  \sum_{i=0}^j p^{n p^j-2j} + \sum_{i=j+1}^{l-m-1} p^{n p^j-2j}   \biggr)\\
   &= \left(p-1\right)^2 p^{p^k+2k}  \sum_{i,j=0}^{l-m-1} p^{n p^j-2j} \le 4k n^2  \left(p-1\right)^2 p^{p^k+2k} p^{p^{l-1}-2l}.
\end{align*}

We will now deal with the second term in \cref{eq:empty}. First of all, we observe that
 the terms $s_{p^i,p^j}(n,g)$ are not easy to describe when $j=l-m$, $i\le l-m-1$ and $g\neq 1$ (see \cref{rem:2cases}). Thus, in order to compute the lower and upper bounds of $S_1(n,g)$, we will use that $0\le |T_i|s_{p^i,p^{l-m}}(n,g)\le p^{p^k+k+f_2(i,l-m)}$ for any $i\le l-m-1$. Hence, we take zero\footnote{We note that if $\deg(g)$ is a $p$-power, we could use \cref{rem:2cases} to obtain a lower bound of order $p^{2k+p^k} p^{p^l-p^{l-m}+p^{l-m-1}-2l+1}$. We decided to include this for simplicity and because all other results in this article remain true without it.} as lower bound for the second summand in \cref{eq:empty}. On the other hand, 
%\begin{multline*}
%     \lvert T_{l-m}\rvert \sum_{i=0}^{l-m-1}\lvert T_i\rvert s_{p^i,p^{l-m}}(n,g) \ge {\color{red}\delta_0(\varepsilon(g))} n \left(1-\frac{1}{p} \right)^2 p^{p^l-p^{l-m}-l} p^{2k+p^k} p^{p^{l-m-1}-l+m+1} = \\ = {\color{red}\delta_0(\varepsilon(g))} n^2 \left(1-\frac{1}{p} \right)^2 p^{2k+p^k} p^{p^l-p^{l-m}+p^{l-m-1}-2l+1} 
%\end{multline*}
for the upper bound for the second summand in \cref{eq:empty} we have
\begin{align*}
  |T_{l-m}| &\sum_{i=0}^{l-m-1}|T_i|s_{p^i,p^{l-m}}(n,g) \le \left(1-\frac{1}{p}\right)^2 n p^{p^k+2k-l} \sum_{i=0}^{l-m-1} p^{(n-1)p^{l-m}+p^i-i}\\
  &=
  \left(1-\frac{1}{p}\right)^2 n p^{p^k+2k}  p^{(1-\frac{1}{n})p^l-l} \sum_{i=0}^{l-m-1} p^{p^i-i}\\
  &= 2(p-1)^2n^2p^{p^k+2k}p^{(1-\frac{1}{n})p^l+\frac{1}{n}p^{l-1}-2l-1}\\
  &\leq 2(p-1)^2n^2p^{p^k+2k}p^{p^l-2l-1}.
\end{align*}

For the third summand in \cref{eq:empty}, we use \cref{lemma:s_identity}, so that the sum becomes 
\[
  \left(1-\frac{1}{p}\right)^2 p^m p^{2p^k} \sum_{i=k-m}^{k-1} p^{k-i} = \frac{(p-1)^2}{p^2} p^m p^{2p^k} p \frac{p^m-1}{p-1} = (p-1) p^{m-1}(p^m-1) p^{2p^k}.
\]

Finally, again using \cref{lemma:s_identity}, for the fourth summand in \cref{eq:empty} we have that
\begin{align*}
   \left(1-\frac{1}{p}\right)^2 p^{2p^k} &\sum_{j=k-m+1}^{k-1} p^{k-j} \sum_{i=0}^{k-1} p^{k-i} \\
   &= (1-\delta_{0}(m)) \frac{(p-1)^2}{p^2} p^{2p^k} p \frac{p^{m-1}-1}{p-1} p \frac{p^{k}-1}{p-1} \\
   &= (1-\delta_{0}(m)) (p^{m-1}-1)(p^k-1) p^{2p^k}.
\end{align*}

Collecting all of our lower and upper bounds for $n\ge 2$, we can conclude that

 \begin{multline*}
 n^2  \left(p-1\right)^2  \frac{ p^{p^{l-1}-2l}}{p^{p^k}}  + 
 \delta_1(g) h(k,m) \le \frac{S_1(n,g)}{p^{2p^k+2k}} \\[1em]
 %  \le  n^2  \left(p-1\right)^2  \frac{4k p^{p^{l-1}-2l} + (p^{-m}-p^{-l}) p^{p^l-l-2}}{p^{p^k}}   + \delta_1(g) h(k,m) \\[1em]
 \le  n^2  \left(p-1\right)^2  \frac{4k p^{p^{l-1}-2l} + 2 p^{p^l-2l-1}}{p^{p^k}}   + \delta_1(g) h(k,m),
\end{multline*}
    where $$ h(k,m)=  \frac{ (p-1) p^{m-1}(p^m-1) + (1-\delta_0(m)) (p^{m-1}-1)(p^k-1)}{p^{2k}}, $$
and thus for any $n\ge 1$ we have that 
\begin{align*}
   n^2 (p-1)^2 \frac{p^{p^{l-1}-2l}}{p^{p^k}} + \delta_1(g) &h(k,m) \le \frac{S_1(n,g)}{p^{2p^k+2k}} \\
   %\le n^2 (p-1)^2 \frac{(8k+4) p^{p^{l}-l}}{p^{p^k}}  + \delta_1(g) h(k,m) \\
   &\le n^2 (p-1)^2 \frac{(8k+4) p^{p^{l-\delta_0(m)}-2l}}{p^{p^k}}  + \delta_1(g) h(k,m).\qedhere
\end{align*}
\end{proof}

\begin{remark}
    Let us briefly comment on the asymptotic behaviour of the quan\-ti\-ties involved in \cref{lemma:boundsS0} and \cref{lemma:boundsS1}. There are two different regimes depending on whether $g=1$ or not. If $g=1$, these quantities tend to zero at least as fast as $p^{-k}$. On the other hand, if $g\neq 1$, then these quantities tend to zero at least as fast as $p^{-p^k}$.
\end{remark}

	We are now ready to prove \cref{thm:engelbound_gen}.
        
	\begin{proof}[Proof of \cref{thm:engelbound_gen}]
			
		Recall that \[ P(W_k,e_n,g)=\frac{1}{|W_k|^2}\sum_{t_1,t_2\in T} s_{t_1,t_2}(n,g)=\frac{S_0(n,g)+S_1(n,g)}{p^{2p^k+2k}} \] 
        for any $g\in W_k$.
        By \cref{lemma:boundsS0} and \cref{lemma:boundsS1}, we have
  %      \begin{multline*}
  %		\frac{\delta_1(g)}{p^{(1+\delta_0(m))k}}+ n(p-1)        {\color{red}(1+\delta_0(m))} \frac{p^{p^{l-1}-l}}{p^{p^k+k}}\leq \frac{S_0(n,g)}{p^{2p^k+2k}}\leq \\ %(2-\delta_0(m)) 
  %      \frac{2\delta_1(g)}{p^{(1+\delta_0(m))k}}+ 4 n(p-1) \frac{p^{p^{l-1}-l}}{p^{p^k+k}}.
  %      \end{multline*}
  %      \begin{multline*}
  %         \left(1+\delta_0(m)\frac{2}{p}\right)n^2 (p-1)^2 \frac{p^{p^{l-1}-2l}}{p^{p^k}} + \delta_1(g) h(k,m) \le \frac{S_1(n,g)}{p^{2p^k+2k}} \le \\ \le n^2 (p-1)^2 \frac{(8k+4) p^{p^{l-\delta_0(m)}-2l}}{p^{p^k}}  + \delta_1(g) h(k,m)
%%\end{multline*}
% $$ h(k,m)=  \frac{ (p-1) p^{m-1}(p^m-1) + (1-\delta_0(m)) (p^{m-1}-1)(p^k-1)}{p^{2k}}.$$
 \begin{multline*}
     \frac{\delta_1(g)}{p^{(1+\delta_0(m))k}}+\left(1+\delta_0(m)\frac{2}{p}\right)n^2 (p-1)^2 \frac{p^{p^{l-1}-2l}}{p^{p^k}}\leq
 P(W_k,e_n,g)\\
 \leq  4\delta_1(g)\frac{n}{p^{(1+\delta_0(m))k}} + 4 n^2 (p-1)^2 \frac{(8k+4) p^{p^{l-\delta_0(m)}-2l}}{p^{p^k}}.
 \end{multline*} 
 Note that, for the upper bound above, we have use the fact that 
 \[
    h(k,m) \le \frac{2n}{p^{(1+\delta_0(m))k}}
 \]
 for any $k$ and $m$.
		%The lower and upper bounds for $S_1(n,g)$ obtained in \cref{lemma:upperboundsS_ig} are bigger\comment{what? Cannot we put everything together?} than those for $S_0(n,g)$, and this yields the result. Now, we have 
		%\[ P(W_k,e_n)= \frac{1}{|W_k|^2}\sum_{t_1,t_2\in T} s_{t_1,t_2}(n)=\frac{S_0(n)+S_1(n)}{p^{2k+2p^k}}.\]   
		%In this case, we consider the dominant lower and upper bounds (respectively, that for $S_0(n)$ and that for $S_1(n)$) obtained in \cref{lemma:upperbuondsS_i}. This yields the result.
	\end{proof}
	
    As an application, we prove a stronger version of the Amit-Ashurst conjecture for the Engel word $e_{p^m}$ in the groups $W_k$.
    
    \begin{proof}[Proof of \cref{thm:amit_ashurts_gen}]
        We may assume $m\leq k-1$. Note that, by \cref{cor:countingkerim}, \[ p^{p^k-p^m}=|\gamma_{p^m+1}(W_k)|=|\mathrm{Im}(e_n)| \] since $\mathrm{Im}(e_n)=\gamma_{n+1}(W_k)$. Let $h$ be an element of $W_k$ of degree $p^m+1$, so that $l(h)=m+1$. By \cref{prop:biggest_fiber}, $P(W_k,e_n,g)\geq P(W_k,e_n,h)$ since $\mathrm{deg}(g)\geq\mathrm{deg}(h)$.
        
        %If $m=0$, by \cref{thm:engelbound_gen}, we have 
        %\[ P(W_k,e_1,h)\geq \frac{c(1,p)p^p}{p^4}\frac{1}{p^{p^k-1}}
        %=\biggl(1-\frac{1}{p}\biggr)^2p^{p-2} \frac{1}{|\mathrm{Im}(e_1)|}. \]
        %In particular, since $|W_k|=p^{k+1}|\mathrm{Im}(e_1)|$, it follows 
        %\[ P(W_k,e_1,h)\geq \biggl(1-\frac{1}{p}\biggr)^2p^{k+p-1}\frac{1}{|W_k|}\geq (p-1)^2p^{p-2}\frac{1}{|W_k|}\geq \frac{1}{|W_k|} \]
        %since $p\geq 2$.

        %Analogously, if $m\geq1$, we have
        By \cref{thm:engelbound_gen}, we have
        \[ P(W_k,e_n,h)\geq 
        \left(1+\delta_0(m)\frac{2}{p}\right)n^2 (p-1)^2 \frac{p^{p^{m}-2m-2}}{p^{p^k}},
        %, \left(1+\delta_0(m)\frac{2}{p}\right)\biggl(1-\frac{1}{p}\biggr)^2 \frac{1}{|\mathrm{Im}(e_n)|}. 
        \]
        and hence the first inequality follows.
        
        Now, for every prime number $p$, note that $p+2\geq 4$ and $1-\frac{1}{p}\geq \frac{1}{2}$, so that $(p+2)(1-\frac{1}{p})^2\geq 1$. Therefore, since $|B_k|=p^{n}|\mathrm{Im}(e_n)|$, we have 
        \[ P(W_k,e_n,h)\geq \left(1+\delta_0(m)\frac{2}{p}\right)\biggl(1-\frac{1}{p}\biggr)^2p^n \frac{1}{|B_k|}\geq \frac{1}{|B_k|}.
        \qedhere\]
        
    \end{proof}
%{\color{violet}    
%    \begin{remark}
%    Here, we show that the factor $(1+\delta_0(m)\frac{2}{p})$ plays an important role in the proof of Corollary C. Assume $n=1$ and suppose we estimate $\left(1+\frac{2}{p}\right)\geq 1$. Then 
%    \[ P(W_k,e_1,h)\geq \biggl(1-\frac{1}{p}\biggr)^2p \frac{1}{|B_k|}=\biggl(1-\frac{1}{p}\biggr)(p-1)\frac{1}{|B_k|}.\]
%    If $p=2$, we have $(1-\frac{1}{2})(2-1)=\frac{1}{2}\ngeq 1$. 

%    However, if we just seek for verifying the Amit-Ashurst conjecture. we can estimate $\left(1+\frac{2}{p}\right)\geq 1$. Indeed, since $|W_k|=p^k|B_k|$ and $k\geq 1$, we have
%\[ P(W_k,e_1,h)\geq \biggl(1-\frac{1}{p}\biggr)^2p^{k+1} \frac{1}{|W_k|}=(p-1)^2\frac{1}{|W_k|}. \]

%    \end{remark}
 %  }
%\commatteo{with the bounds written in Lemma 4.10, Corollary C follows immediately by looking at the second term of the lower bound?}
    
	\noshow{   
		Now, we need to distinguish elements $\x$ and $w$ in different groups $W_k$, so we use the notation $W_k=C_p\wr C_{p^k}=\langle\x_k\rangle\wr\langle w_k\rangle$.
		For every $l\geq k$, consider the group homomorphism $\vphi_{kl}\colon W_l\to W_k$ defined by $\x_l \mapsto \x_k$ and $w_l \mapsto w_k$.
		Then $(W_k,\vphi_{kl})$ is an inverse system of finite $p$-group and its inverse limit is the pro-$p$ wreath product
		\[ W:=\underset{k}{\varprojlim}\hspace{1mm} W_k= C_p\wr\Z_p=\langle\x\rangle\wr\langle w\rangle, \]  with pro-cyclic top group $\Z_p = \langle \x \rangle$ and elementary abelian base group given by the direct product of copies of $C_p$ indexed by $\Z_p$. Note that $W$ is a $2$-generated pro-$p$ group of infinite rank. In particular, $W$ is a finitely generated pro-$p$ group that is not $p$-adic analytic. It is now not difficult to show that the Engel words are not probabilistic identities on $W$.

		\begin{proof}[Proof of Corollary~\ref{cor:cpZp}]
			By \cref{thm:engelbound_gen}, we have \[ \underset{k\ge 1} {\mathrm{inf}} P(W_k,e_n)\leq C(n)\underset{k\to\infty}{\mathrm{lim}}\frac{k}{p^k}=0. \]
			In particular, since $W=\varprojlim W_k$ is finitely generated, applying \cite[(11.2), pg.\ 211]{LS03} it follows that $P(W,e_n)=0$.
		\end{proof}
	}

	\noshow{
	As an application, we prove the following.\commatteo{I don't know if we need this if we do not finish the other part...} 
	
	\begin{corollary}
		Let $n\in\N$. Then there is a constant $C(n)>0$ such that \[ P(\mathrm{Inn}(W_k),e_n)\leq C(n)\frac{k}{p^{k-1}} \] for every $k\in\N$.
	\end{corollary}
	\begin{proof}
		Since $\mathrm{Inn}(W_k)=W_k/Z(W_k)$ and $Z(W_k)\simeq C_p$, we have 
		\[ P(\mathrm{Inn}(W_k),e_n)=\sum_{g\in Z(W_k)} P(W_k,e_n,g)\leq C(n)\frac{k}{p^{k-1}}.\qedhere\]
	\end{proof}
	}

	\section{Applications to pro-\texorpdfstring{$p$}{p} groups}\label{pro-psection}

    We begin with two results regarding the Haar measure of pre-images of the form $X(G,w,S):=w^{-1}(S)$, where $w$ is a word in $d$ letters and $S$ is a closed subset of a countably based profinite group $G$.
	
	\begin{lemma}\label{2-lemma:doubleseq}
		Let $G$ be a profinite group and $w\in F_d$ a non-trivial word in $d$ letters. Let $S$ be a closed subset of $G$ and $\{ N_i \}_{i\geq 1}$ a descending chain of open normal subgroups of $G$, and define $x\colon\N\times\N\to\mathbb{R}$ by
        \[
        x(i,j)=\frac{|\pi_j^d(X(G,w,SN_i))|}{|G:N_j|^d},
        \]
        where $\pi_j^d\colon G^d\to (G/N_j)^d$ is the natural projection. Then $x$ is decreasing, that is, $x(i',j')\leq x(i,j)$ for every $i'\geq i$ and $j'\geq j$.
	\end{lemma}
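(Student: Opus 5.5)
We treat the two monotonicity directions separately; chaining them gives the general inequality $x(i',j')\le x(i,j')\le x(i,j)$.

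\emph{Monotonicity in $i$.} Fix $j$ and let $i'\ge i$. Since the chain is descending, $N_{i'}\le N_i$, so $SN_{i'}\subseteq SN_i$. Taking preimages under $w$ gives $X(G,w,SN_{i'})\subseteq X(G,w,SN_i)$. Projecting by $\pi_j^d$ preserves inclusion, and the denominator $|G:N_j|^d$ does not depend on $i$. Hence $x(i',j)\le x(i,j)$.

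\emph{Monotonicity in $j$.} Fix $i$, write $Y=X(G,w,SN_i)$, and let $j'\ge j$. Then $N_{j'}\le N_j$, so $\pi_j^d$ factors as $\rho\circ\pi_{j'}^d$. Here $\rho\colon (G/N_{j'})^d\to (G/N_j)^d$ is the natural surjection, and every fibre of $\rho$ has exactly $|N_j:N_{j'}|^d$ elements. The image $\pi_j^d(Y)=\rho(\pi_{j'}^d(Y))$ is covered by fibres of $\rho$, and each of those fibres meets $\pi_{j'}^d(Y)$. Counting the points of $\pi_{j'}^d(Y)$ fibre by fibre therefore gives
\[
|\pi_{j'}^d(Y)|\le |\pi_j^d(Y)|\cdot |N_j:N_{j'}|^d .
\]
Dividing by $|G:N_{j'}|^d=|G:N_j|^d\,|N_j:N_{j'}|^d$ yields $x(i,j')\le x(i,j)$.

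The argument is a routine counting of cosets, and no step is a genuine obstacle. The only point needing care is the fibre-size bound in the second step. It does not require $Y$ to be a union of $N_{j'}$-cosets; it needs only $|\pi_{j'}^d(Y)\cap\rho^{-1}(z)|\le|\rho^{-1}(z)|$ for each $z\in\pi_j^d(Y)$. Note also that $Y$ is closed because $SN_i$ is closed (it is a union of finitely many cosets of the open, hence closed, subgroup $N_i$) and $w$ is continuous, but the counting itself does not use this.
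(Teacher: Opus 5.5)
Your proof is correct and follows the paper's argument. The paper likewise combines the inclusion $X(G,w,SN_{i'})\subseteq X(G,w,SN_i)$ with the inequality $\mu_{j'}^d(X)\le\mu_j^d(\pi_{jj'}^d(X))$ for the quotient map $(G/N_{j'})^d\to(G/N_j)^d$, chaining them as $x(i',j')\le x(i',j)\le x(i,j)$; your fibre count is exactly the justification of that quotient inequality, which the paper states without proof.
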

	\begin{proof}
		Let $i'\geq i$, $j'\geq j$, and since $N_{j'}\leq N_j$, let $\pi_{jj'}^d\colon (G/N_{j'})^d\to(G/N_j)^d$ be the natural projection. Moreover, write $X_i=X(G,w,SN_i)$ and note that $x(i,j)=\mu_j^d(\pi_j^d(X_i))$ where $\mu_j^d$ is the normalized Haar measure on $(G/N_j)^d$.
        Since $(G/N_j)^d$ is a quotient of $(G/N_{j'})^d$, we have $\mu_{j'}^d(X)\leq \mu_{j}^d(\pi_{jj'}^d(X))$ for every $X\subseteq (G/N_{j'})^d$. In particular, \[ \mu_{j'}^d(\pi_{j'}^d(X_{i'}))\leq\mu_j^d(\pi_{jj'}^d(\pi_{j'}^d(X_{i'})))=\mu_j^d(\pi_j^d(X_{i'})) \] since $\pi_{jj'}^d\circ \pi_{j'}^d=\pi_j^d$.
        Therefore, as  $X_{i'}\subseteq X_i$, it follows that \[ x(i',j')=\mu_{j'}^d(\pi_{j'}^d(X_{i'}))\leq \mu_j^d(\pi_j^d(X_{i'}))\leq\mu_j^d(\pi_j^d(X_i))=x(i,j). \qedhere\] 
	\end{proof}
	
	\begin{proposition}\label{2-prop:probformula}
		Let $G$ be a countably based profinite group an let $w\in F_d$ be a non-trivial word in $d$ letters. Let $S$ be a closed subset of $G$. Then 
		\begin{equation}\label{2-eq:2corollario}
			P(G,w_G, S)=\underset{i\to\infty}{\mathrm{lim}}P(G/N_i,w_{G/N_i},SN_i/N_i),
		\end{equation}
		for every descending chain $\{N_i\}_{i\geq 1}$ of open normal subgroups of $G$ with trivial intersection.
	\end{proposition}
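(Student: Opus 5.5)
The plan is to write the Haar measure of the closed set $X:=w^{-1}(S)\subseteq G^d$ as a limit of normalized counts in the finite quotients, and to compare those counts with the double sequence $x(i,j)$ of \cref{2-lemma:doubleseq}. Write $X_i=X(G,w,SN_i)=w^{-1}(SN_i)$. Since $w$ commutes with the projections, $\pi_i^d(X_i)=w_{G/N_i}^{-1}(SN_i/N_i)$. Moreover $X_i$ is a union of cosets of $N_i^d$, because $SN_i$ is a union of $N_i$-cosets and $w(g_1n_1,\dots,g_dn_d)\in w(g_1,\dots,g_d)N_i$. Hence
\[ P(G/N_i,w_{G/N_i},SN_i/N_i)=x(i,i)=\mu(X_i), \]
where $\mu$ denotes the Haar measure on $G^d$.

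First I would show that $\bigcap_i X_i=X$. Because $S$ is closed and $\{N_i\}$ is a descending chain of open normal subgroups with trivial intersection in a countably based profinite group, the $N_i$ form a base of neighbourhoods of $1$. Therefore $\bigcap_i SN_i=\overline{S}=S$. Taking preimages gives $\bigcap_i X_i=w^{-1}(\bigcap_i SN_i)=X$.

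Next, the sets $X_i$ are decreasing closed (indeed clopen) sets in a probability space. Continuity of measure from above then gives $\mu(X)=\lim_i\mu(X_i)$. By \cref{2-lemma:doubleseq}, the diagonal sequence $x(i,i)$ is decreasing, so this limit exists and equals $\inf_i x(i,i)$. This is exactly \eqref{2-eq:2corollario}.

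The only delicate point is the claim $\bigcap_i SN_i=S$. This needs the fact that a chain of open normal subgroups with trivial intersection is cofinal among open normal subgroups, which I would prove by a compactness argument. Given an open $U\ni 1$, the closed sets $N_i\smallsetminus U$ are decreasing with empty intersection, so by compactness one of them is empty, i.e.\ $N_i\subseteq U$. Given this, if $g\notin S$ there is some $N_i$ with $gN_i\cap S=\emptyset$, so $g\notin SN_i$. The rest is routine measure theory; the lemma on $x(i,j)$ is used only to guarantee monotonicity, and could be bypassed, since $X_{i+1}\subseteq X_i$ already implies it.
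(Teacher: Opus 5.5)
Your proof is correct, and it is more direct than the paper's. Both arguments begin the same way. One writes $w^{-1}(S)=\bigcap_i X_i$ with $X_i=w^{-1}(SN_i)$, and continuity of Haar measure from above gives $P(G,w_G,S)=\lim_i\mu(X_i)$.

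The paper then takes a longer route:
\begin{itemize}
\item it computes $\mu(X_i)$ as $\lim_j x(i,j)$ by citing \cite{LS03};
\item it uses the monotonicity of the double sequence $x(i,j)$ and a theorem on monotone double sequences from \cite{Habil06} to identify the iterated limit with $\inf_{i,j}x(i,j)$;
\item it observes that this infimum is attained along the diagonal $x(i,i)=P(G/N_i,w_{G/N_i},SN_i/N_i)$.
\end{itemize}

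Your key observation is that $X_i$ is a union of cosets of $N_i^d$, because $N_i$ is normal and $SN_i$ is a union of $N_i$-cosets. This gives $\mu(X_i)=x(i,i)$ exactly. In fact $x(i,j)=\mu(X_i)$ for every $j\ge i$, so the paper's inner limit is eventually constant. This removes the need for the double-sequence machinery and both external citations.

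You also supply the compactness argument showing that the chain $\{N_i\}$ is a neighbourhood base of $1$. That is what actually justifies $\bigcap_i SN_i=S$; the paper passes over this step with a brief appeal to continuity of $w_G$ and closedness of $S$.

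As you note, the monotonicity lemma is not needed in your version, since $X_{i+1}\subseteq X_i$ directly. Likewise, the countably-based hypothesis enters only through the existence of the chain $\{N_i\}$.
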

	\begin{proof}
		
		Note that $P(G,w_G,S)=\mu_{G^d}(w_G^{-1}(S))$, where $\mu_{G^d}$ is the normalized Haar measure on $G^d$. Since the word map $w_G$ is continuous and $S$ is closed, we have 
		\[ X(G,w_G,S)=w_G^{-1}(S) = \bigcap_{i\geq 1} w_G^{-1}(SN_i) = \bigcap_{i\geq 1} X(G,w,SN_i). \]
		Write $x(i)=\mu_{G^d}(X(G,w,SN_i))$. By standard properties of the Haar measure, it follows that
		\[ P(G,w_G,S)=\underset{i\to\infty}{\mathrm{lim}} x(i). \]
		Moreover, by \cite[Pag. 203]{LS03}, we have $x(i)=\underset{j\to\infty}{\mathrm{lim}} x(i,j)$, where  
		\[x(i,j):=\frac{|\pi_j(X(G,w,SN_i))|}{|G:N_j|^d}. \]
		Note that $x(i,j)\in[0,1]$ for every $i,j$ and $s(i',j')\leq s(i,j)$ for every $i'\geq i$ and $j'\geq j$ by \cref{2-lemma:doubleseq}. Therefore, by \cite[Theorem 4.2]{Habil06}, we have 
		\begin{equation}\label{2-eq:doubleseqeq}
			P(G,w_G,S)=\underset{i\to\infty}{\mathrm{lim}}(\underset{j\to\infty}{\mathrm{lim}} x(i,j))=\underset{i,j}{\mathrm{inf}}\ x(i,j). 
		\end{equation} 
		
		Now, if $N\unlhd G$ and $\pi_N^d\colon G^d\to (G/N)^d$ is the natural projection, then
        $$
        \pi_N^d(X(G,w,SN))=X(G/N,w_{G/N},SN/N).
        $$
        Therefore, since the infimum in (\ref{2-eq:doubleseqeq}) is realized by the subsequence $\{s(i,i)\}_{i\geq 1}$, we obtain that 
		\[ P(G,w_G,S)=\underset{i,j\in\N}{\mathrm{inf}}x(i,j)=\underset{i\to\infty}{\mathrm{lim}} x(i,i)=\underset{i\to\infty}{\mathrm{lim}} P(G/N_i,w_{G/N_i},SN_i/N_i).\qedhere\]
	\end{proof}
	In what follows, we use the notation $W_k=C_p\wr C_{p^k}=\langle\x_k\rangle\wr\langle w_k\rangle$. For every $l\geq k$, consider the group homomorphism 
    \begin{align*}
	\vphi_{kl}\colon W_l&\to W_k\\
	 \x_l &\to \x_k\\
	 w_l &\to w_k.
    \end{align*}
    Then $(W_k,\vphi_{kl})$ is an inverse system of finite $p$-group and its inverse limit is the pro-$p$ wreath product
    \[ W:=\underset{k}{\varprojlim}\hspace{1mm} W_k= C_p\wr\Z_p=\langle\x\rangle\wr\langle w\rangle, \]  with pro-cyclic top group $\Z_p$ and elementary abelian base group $B$ given by the direct product of copies of $C_p$ indexed by $\Z_p$. Note that $W$ is a $2$-generator pro-$p$ group of infinite rank. In particular, $W$ is a finitely generated pro-$p$ group that is not $p$-adic analytic.
	
    Consider the $n$-th Engel map $e_n\colon W^2\to W$ and let $I_n=\mathrm{Im}(e_n)$.
    Recall that, if we denote by $e_{n,k}\colon W_k^2\to W_k$ the $n$-th Engel map evaluated on $W_k$, we have $I_{n,k}:=\mathrm{Im}(e_{n,k})=\gamma_{n+1}(W_k)$ (compare \cref{pr:degree}).  Since $I_n=\underset{k}{\varprojlim}\hspace{1mm} I_{n,k}$, it follows that $I_n=\gamma_{n+1}(W)$. %Moreover, given topological spaces $X,Y$ and a map $f\colon X\to Y$, recall that $f$ is a \emph{relatively open map} if whenever $U$ is an open subset of $X$, then $f(U)$ is an open subset of the image $\mathrm{Im}(f)$, endowed with the subspace topology induced by the codomain $Y$. Since $\mathrm{Im}(e_n)=\gamma_{n+1}(W)$ and $\cap_{i\geq 1}\gamma_i(W)=1$, the set \[ \mathcal{B}_n=\{ \gamma_{i+1}(W)\mid i\geq n \} \] is a base for the topology of $\mathrm{Im}(e_n)$.

	\begin{proposition}\label{prop:engelmeasure}
		Let $n=p^m$, with $m\in\N_0$, and let $S$ be a closed subset of $I_n$ such that $\mu_{I_n}(S)>0$. Then $\mu_{W^2}(e_n^{-1}(S))>0.$
	\end{proposition}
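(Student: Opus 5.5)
Apply \cref{2-prop:probformula} to the filtration $N_k=\ker(W\to W_k)$ of $W$, which has trivial intersection. This gives
\[ \mu_{W^2}(e_n^{-1}(S))=\lim_{k\to\infty}P(W_k,e_{n,k},S_k),\qquad S_k:=SN_k/N_k\subseteq I_{n,k}=\gamma_{n+1}(W_k). \]
It therefore suffices to bound $P(W_k,e_{n,k},S_k)$ from below by a positive constant, uniformly in all large $k$.

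Since the fibers are disjoint, $P(W_k,e_{n,k},S_k)=\sum_{g\in S_k}P(W_k,e_{n,k},g)$. Each $g\in S_k$ lies in $\mathrm{Im}(e_{n,k})$, so for $k>m$ \cref{thm:amit_ashurts_gen} gives
\[ P(W_k,e_{n,k},g)\geq \Bigl(1+\delta_0(m)\tfrac{2}{p}\Bigr)\Bigl(1-\tfrac1p\Bigr)^2\frac{1}{|I_{n,k}|}\geq \Bigl(1-\tfrac1p\Bigr)^2\frac{1}{|I_{n,k}|}. \]
Equivalently, one can use \cref{prop:biggest_fiber}, compare with an element $h$ of minimal degree $n+1$, and then apply the lower bound of \cref{thm:engelbound_gen}. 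Summing over $g\in S_k$ yields
\[ P(W_k,e_{n,k},S_k)\geq \Bigl(1-\tfrac1p\Bigr)^2\frac{|S_k|}{|I_{n,k}|}. \]

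It remains to show that $\liminf_k |S_k|/|I_{n,k}|>0$. The natural map $W\to W_k$ restricts to a surjection $I_n=\gamma_{n+1}(W)\to\gamma_{n+1}(W_k)=I_{n,k}$ with kernel $I_n\cap N_k$, which is an open normal subgroup of $I_n$. Moreover, $S_k$ is exactly the image of $S$ under this map. Hence
\[ \frac{|S_k|}{|I_{n,k}|}=\mu_{I_n}\bigl(S(I_n\cap N_k)\bigr)\geq \mu_{I_n}(S)>0, \]
because $S\subseteq S(I_n\cap N_k)$ and the Haar measure of a union of cosets of an open subgroup is the number of cosets divided by the index. Combining the three steps,
\[ \mu_{W^2}(e_n^{-1}(S))\geq \Bigl(1-\tfrac1p\Bigr)^2\mu_{I_n}(S)>0. \]

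The main point needing care is the identification of $S_k$ with the image of $S$ in $I_n/(I_n\cap N_k)\cong I_{n,k}$, together with the surjectivity $\gamma_{n+1}(W)\to\gamma_{n+1}(W_k)$. Surjectivity holds because the lower central series terms of a profinite group map onto those of its quotients; here it is also visible directly from the generators described in \cref{lemma:lcs}. A second check is that $e_n^{-1}(S)$ is closed, which holds since $e_n$ is continuous, so that \cref{2-prop:probformula} applies. Note that the hypothesis $n=p^m$ is used only through \cref{thm:amit_ashurts_gen}.
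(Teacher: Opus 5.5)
Your proof is correct and follows the paper's argument: \cref{2-prop:probformula} along $N_k=\ker(W\to W_k)$, the fiber lower bound of \cref{thm:amit_ashurts_gen} summed over $\pi_k(S)$, and a comparison of $|\pi_k(S)|/|I_{n,k}|$ with $\mu_{I_n}(S)$. The only cosmetic difference is in the last step: the paper uses the limit formula $\mu_{I_n}(S)=\lim_k|\pi_k(S)|/|I_{n,k}|$, whereas you use the one-sided inequality $\mu_{I_n}(S(I_n\cap N_k))\geq\mu_{I_n}(S)$, which also yields the explicit bound $\mu_{W^2}(e_n^{-1}(S))\geq(1-\frac{1}{p})^2\mu_{I_n}(S)$.
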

	\begin{proof}
		Let $N_k=\mathrm{Ker}(\pi_k\colon W\to W_k)$. By \cref{2-prop:probformula}, we have
		\begin{equation}\label{eq:unoprop}
			\mu_{W^2}(e_n^{-1}(S))=P(W,{e_n},S)=\underset{k\to\infty}{\mathrm{lim}}P(W_k,e_n,\pi_k(S)).
		\end{equation} 
		Moreover, by \cref{thm:amit_ashurts_gen}, there is a constant $a(n,p)>0$ such that \begin{equation}\label{eq:dueprop}
			P(W_k,e_n,\pi_k(S))\geq a(n,p)\frac{|\pi_k(S)|}{|I_{n,k}|}
		\end{equation} for every $k\geq 1$. Let $\rho_k$ be the restriction of $\pi_k$ to $I_n$ and note that $I_n/I_n\cap N_k\simeq I_{n,k}$.
		%Now, since $|B:I_n|$ is finite, we have $\mu_{I_n}(S)=|B:I_n|\mu_B(S)$. 
        In particular, 
		\begin{equation}\label{eq:treprop}
		    0<\mu_{I_n}(S)=\underset{k\to\infty}{\mathrm{lim}}\frac{|\rho_k(S)|}{|I_n:I_n\cap N_k|}=\underset{k\to\infty}{\mathrm{lim}}\frac{|\pi_k(S)|}{|I_{n,k}|}. 
		\end{equation} 
		Combining Equations (\ref{eq:unoprop}), (\ref{eq:dueprop}), and (\ref{eq:treprop}), the proposition follows.
	\end{proof}
    In the following two results, we collect some algebraic and topological properties of $e_n$.
    
	\begin{lemma}\label{lemma:engelcosetimage}
		Let $g_1=w^{t_1}x_1$, $g_2=w^{t_2}x_2$ be elements of $W_k$ and let $N\unlhd W_k$ be such that $N\subseteq B$. Then \[ e_n(g_1N,g_2N)=e_n(g_1,g_2)e_n(w^{t_1}N,w^{t_2}N). \]
	\end{lemma}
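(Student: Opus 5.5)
The plan is to read both sides of the identity as sets: $e_n(g_1N,g_2N)=\{e_n(a,b)\mid a\in g_1N,\ b\in g_2N\}$, and similarly for $e_n(w^{t_1}N,w^{t_2}N)$. I will then show that the two sides are parametrized by the same pairs $(y_1,y_2)\in N^2$, with matching elements for each pair. The only tools needed are \cref{lemma:decomposition} and the fact, from \cref{lemma:factorization}, that every $\vphi_t$ is a homomorphism of the abelian group $B$.

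\textbf{Step 1: parametrize the cosets.} Since $N\subseteq B$, every element of $g_1N$ can be written as $w^{t_1}x_1y_1$ with $y_1\in N$, and every element of $g_2N$ as $w^{t_2}x_2y_2$ with $y_2\in N$. Here $x_iy_i\in B$. Likewise, the elements of $w^{t_i}N$ are exactly the $w^{t_i}y_i$ with $y_i\in N$.

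\textbf{Step 2: apply the decomposition and split.} By \cref{lemma:decomposition},
\[ e_n(w^{t_1}x_1y_1,w^{t_2}x_2y_2)=\vphi_{t_2}^n(x_1y_1)\,\bigl(\vphi_{t_2}^{n-1}\circ\vphi_{t_1}(x_2y_2)\bigr)^{-1}. \]
Both maps are homomorphisms $B\to B$ and $B$ is abelian. So the right-hand side splits as
\[ \Bigl(\vphi_{t_2}^n(x_1)\,\bigl(\vphi_{t_2}^{n-1}\circ\vphi_{t_1}(x_2)\bigr)^{-1}\Bigr)\cdot\Bigl(\vphi_{t_2}^n(y_1)\,\bigl(\vphi_{t_2}^{n-1}\circ\vphi_{t_1}(y_2)\bigr)^{-1}\Bigr). \]

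\textbf{Step 3: identify the factors.} Applying \cref{lemma:decomposition} again, this time with $x_i$ replaced by $y_i$ and by $x_i$ respectively, the two factors are $e_n(g_1,g_2)$ and $e_n(w^{t_1}y_1,w^{t_2}y_2)$. Hence
\[ e_n(w^{t_1}x_1y_1,w^{t_2}x_2y_2)=e_n(g_1,g_2)\,e_n(w^{t_1}y_1,w^{t_2}y_2) \]
for every pair $(y_1,y_2)\in N^2$.

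\textbf{Step 4: conclude.} As $(y_1,y_2)$ ranges over $N^2$, the left-hand side of Step 3 runs through all of $e_n(g_1N,g_2N)$. The right-hand side runs through all of $e_n(g_1,g_2)\,e_n(w^{t_1}N,w^{t_2}N)$. This gives the equality of sets.

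There is no real obstacle here. The one point needing care is Step 1: the coset $g_1N$ must be rewritten as $w^{t_1}(x_1N)$ with the same exponent $t_1$, so that \cref{lemma:decomposition} applies with fixed $t_1,t_2$. This is exactly where the hypothesis $N\subseteq B$ is used.
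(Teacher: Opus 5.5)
Your proposal is correct and follows the paper's proof essentially verbatim. Like the paper, you write the elements of the cosets as $w^{t_i}x_iy_i$ with $y_i\in N\subseteq B$, apply \cref{lemma:decomposition}, and split the result using that $x\mapsto[x,w^t]$ is a homomorphism of the abelian group $B$ (\cref{lemma:factorization}). Your explicit set-equality conclusion in Step 4 only spells out what the paper leaves implicit.
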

	\begin{proof}
		Let $y_1,y_2\in N$. Since $x_1y_1,x_2y_2\in B$, by \cref{lemma:decomposition} we have 
		\[ e_n(w^{t_1}x_1y_1,w^{t_2}x_2y_2)=[x_1y_1,_nw^{t_2}][x_2y_2,w^{t_1},_{n-1}w^{t_2}]^{-1}. \]
		Since the map $x\mapsto [x,w^t]$ is a group homomorphism (compare \cref{lemma:factorization}), it follows that
		\[ [x_1y_1,_nw^{t_2}][x_2y_2,w^{t_1},_{n-1}w^{t_2}]^{-1}= e_n(g_1,g_2)e_n(w^{t_1}y_1,w^{t_2}y_2), \] and hence the result.
	\end{proof}

	\begin{proposition}\label{prop:criteriumemptyinterior2}
		Let $X$ be a subset of $W$. If $e_n^{-1}(X)$ has non-empty interior, then $X$ contains a coset of $\gamma_h(W)$ for some $h\in\N$. 
	\end{proposition}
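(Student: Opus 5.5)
If $e_n^{-1}(X)$ has non-empty interior in $W^2$, it contains a basic open set $g_1N\times g_2N$ with $N=N_k\cap B$ (or any small enough open normal subgroup contained in $B$). Since the subgroups $N_k$ form a base of neighbourhoods of $1$ and $B$ is open in $W$, we can shrink to an open normal subgroup $N\le B$; a natural choice is $N=\gamma_r(W)$ for large $r$, because the $\gamma_r(W)$ with $r\ge 2$ lie in $B$, are open normal, and form a base of neighbourhoods of the identity. So we may assume $g_1\gamma_r(W)\times g_2\gamma_r(W)\subseteq e_n^{-1}(X)$, and then $e_n(g_1\gamma_r(W),g_2\gamma_r(W))\subseteq X$.

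Next we compute this image using \cref{lemma:engelcosetimage}. That lemma is stated in $W_k$, but it passes to $W$ by taking inverse limits, or we can simply redo its proof in $W$ using the continuous versions of \cref{lemma:decomposition} and \cref{lemma:factorization}. Writing $g_i=w^{t_i}x_i$ with $t_i\in\Z_p$ and $x_i\in B$, we get
\[ e_n(g_1N,g_2N)=e_n(g_1,g_2)\,\{[y_1,_nw^{t_2}][y_2,w^{t_1},_{n-1}w^{t_2}]^{-1} : y_1,y_2\in N\}. \]
The set on the right contains $\vphi_{t_2}^n(N)$ (take $y_2=1$), which is a subgroup. So it suffices to show that $\vphi_{t_2}^n(\gamma_r(W))$ contains some $\gamma_h(W)$.

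By \cref{pr:degree}, applied in each $W_k$ and passed to the limit (images of compact sets commute with inverse limits), $\vphi_{t_2}^n(\gamma_r(W))=\gamma_{np^{\nu(t_2)}+r}(W)$, provided $t_2\neq 0$. The main obstacle is the case $t_2=0$ (or $\nu(t_2)$ arbitrarily large), where $\vphi_{t_2}$ is trivial. We avoid it using openness: perturbing $g_2$ inside the open set is allowed, and since $\gamma_r(W)\le B$ we cannot change $t_2$ this way. Instead we shrink the neighbourhood differently. The interior contains an open set $U\times V$ with $V=g_2N'$ for an arbitrary open normal $N'$, e.g. $N'=N_k$. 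Then $V$ contains elements $w^{t_2+p^k s}x$, and we can choose $t_2'=t_2+p^k$ with $\nu(t_2')<\infty$, so $t_2'\ne0$. Replacing $g_2$ by this element and $N$ by $\gamma_r(W)\cap N_k$, which still contains some $\gamma_{r'}(W)$, we get $X\supseteq e_n(g_1,g_2')\,\gamma_h(W)$ with $h=np^{\nu(t_2')}+r'$. This is a coset of $\gamma_h(W)$, which proves the claim.
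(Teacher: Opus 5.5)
Your argument is the paper's proof: pass from an open box inside $e_n^{-1}(X)$ to cosets of some $\gamma_r(W)$, apply \cref{lemma:engelcosetimage} with $y_2=1$, and use \cref{pr:degree} to get $\vphi_{t_2}^n(\gamma_r(W))=\gamma_{r+np^{\nu(t_2)}}(W)$.

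One justification is wrong. The subgroups $B$, $N_k\cap B$ and $\gamma_r(W)$ are not open in $W$. Already $W/B\cong\Z_p$ is infinite, so no open subgroup lies in $B$, and these subgroups are therefore not a neighbourhood base of $1$. The reduction to $\gamma_r(W)$-cosets is still valid, for the reason the paper gives. For any open normal $N$, the quotient $W/N$ is a finite $p$-group, hence nilpotent, so $\gamma_r(W)\subseteq N$ for some $r$. You in fact use exactly this fact later for $N_k$.

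Your treatment of $t_2=0$ fixes a point the paper's write-up skips. If $g_2\in B$ and $n\ge 2$, then $e_n(g_1\gamma_r(W),g_2\gamma_r(W))=\{1\}$, so the paper's formula $\gamma_{h^*+np^{\nu(t_2)}}(W)$ degenerates to the trivial subgroup. One must then, as you do, replace $g_2$ by $g_2w^{p^k}\in g_2N_k$. Two small corrections: this needs $N_k$ sufficiently small rather than arbitrary, and the shift should only be made when $t_2=0$, since otherwise $t_2+p^k$ could vanish.
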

	\begin{proof}
		Let $N\unlhd_oW$ and $g_1,g_2\in W$ be such that $e_n(g_1N,g_2N)\subseteq X$. Since $W/N$ is a finite $p$-group, there is $h^*\in\N$ such that $\gamma_{h^*}(W)\subseteq N$. By \cref{lemma:engelcosetimage}, we have 
		\[ e_n(g_1\gamma_{h^*}(W),g_2\gamma_{h^*}(W))=e_n(g_1,g_2)e_n(w^{t_1}\gamma_{h^*}(W),w^{t_2}\gamma_{h^*}(W)). \]
		In particular, since $e_n(w^{t_1}\gamma_{h^*}(W),w^{t_2})=\gamma_{h^*+np^{\nu(t_2)}}(W)$ (see \cref{pr:degree}), it follows that \[ X\supseteq e_n(g_1N,g_2N)\supseteq e_n(g_1,g_2)\gamma_h(W) \] for some $h\in\N$.
	\end{proof}
	
	Note that, since $\mathrm{Im}(e_n)=\gamma_{n+1}(W)$ and $\cap_{i\geq 1}\gamma_i(W)=1$, the set \[ \mathcal{B}_n=\{ \gamma_{i+1}(W)\mid i\geq n \} \] is a base for the topology of $\mathrm{Im}(e_n)$. Combining \cref{prop:engelmeasure} and \cref{prop:criteriumemptyinterior2}, the proof of \cref{cor:construction} follows.
	
	To conclude, we show that such a closed subset $S$ of $I_n$ exists. Since $I_n$ is a closed subgroup of the $2$-generator pro-$p$ group $W$, we have that $I_n$ is a countably based pro-$p$ group. 

\begin{lemma}[Folklore]
    Let $G$ be a countably based profinite group. Then, $G$ contains a closed subset of positive Haar measure and empty-interior.
\end{lemma}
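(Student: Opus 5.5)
We construct, in the countably based profinite group $G$, a closed set of the form $S = G\setminus U$ with $U$ open, dense, and of Haar measure strictly less than $1$. Such an $S$ is closed and has $\mu_G(S) = 1-\mu_G(U) > 0$. Its interior is empty because its complement $U$ is dense. The proof therefore reduces to producing an open dense set of small measure.

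We may assume $G$ is infinite. If $G$ is finite, every singleton is open and no such set can exist, so the statement implicitly concerns infinite groups; in our application $G=I_n$ is infinite. Since $G$ is countably based, it has a descending chain of open normal subgroups $N_1\geq N_2\geq\cdots$ with trivial intersection. The cosets of the $N_i$ form a countable base of the topology, and we enumerate the basic open sets as $V_1,V_2,\dots$. Because $G$ is infinite, each index $|G:N_i|$ is finite, and $|G:N_i|\to\infty$; hence $\mu_G(N_i)\to 0$.

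The key step is the following. For each $j$, the set $V_j$ is a coset $gN_{i}$. Inside it we choose a smaller coset $U_j=g'N_{i_j}\subseteq V_j$ with $i_j\geq i$ large enough that $\mu_G(N_{i_j}) \le 2^{-j-2}$. We then set $U=\bigcup_j U_j$. This set is open and meets every $V_j$, so it is dense. By countable subadditivity, $\mu_G(U)\le\sum_j 2^{-j-2}=1/4<1$. Hence $S=G\setminus U$ is closed, has measure at least $3/4$, and has empty interior, since any nonempty open set contains some $V_j$ and therefore meets $U$.

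There is no serious obstacle in this argument. The only points requiring care are the existence of the countable base of cosets and the fact that $\mu_G(N_i)=1/|G:N_i|\to0$, which is where the infiniteness of $G$ is used.
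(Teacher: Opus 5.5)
Your proof is correct and uses the same strategy as the paper: build an open dense subset of Haar measure strictly less than $1$ and take its complement. The paper builds that open set level by level, taking one coset of $G(n(n+1)/2)$ inside each still-uncovered coset of $G(n(n-1)/2)$ and bounding the measure because each block of indices has product at least $2^k$. You instead enumerate the countable base of cosets and spend a geometric budget $2^{-j-2}$ using $\mu(N_i)\to 0$, which is slightly cleaner and even gives $\mu(S)\ge 3/4$. You are also right that $G$ must be infinite: the statement fails for finite $G$, and the paper assumes this tacitly through its strictly descending filtration $G(0)>G(1)>\cdots$.
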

\begin{proof}
    Recall that being countably based is equivalent to the existence of a filtration $G= G(0) > G(1) > G(2) > \ldots$ of open subgroups of $G$.    
    We define inductively a subset $C \subset G$. Set $C_1=G(1)$. Passing from step $n-1$ to step $n$, the set $C_{n}$ is the union of exactly one coset of $G(\frac{n(n+1)}{2})$ in each coset of $G(\frac{n(n-1)}{2})$ that has not been included in $C_n$. Set $C=\bigcup_{n=1}^\infty C_n$.

    Note that $C$ is open, being a union of open sets, and it is easy to see that it is dense in $G$, because $\bigcap
    G(n)$ is trivial.
    %\commatteo{Take an open set $U\subseteq G$. Then there is a coset of some $G_{n(n-1)/2}$ contained in $U$ (for some $n$). But then an least one coset of $G_{n(n+1)/2}$ is in $U$.}
    Moreover, we can estimate the measure of $C$ in the following way:
    \begin{align*}
      \mu(C)& = \frac{1}{n_1} + \frac{n_1-1}{n_1 n_2 n_3} + \frac{(n_1-1)(n_2 n_3-1)}{n_1 \ldots n_6} + \cdots 
      \\
      &+  \frac{(n_1-1)(n_2 n_3-1)(n_4 n_5 n_6 -1)\ldots(n_{\frac{(k-2)(k-1)}{2}+1} \ldots n_{\frac{(k-1)k}{2}} -1)}{n_1 \ldots n_{\frac{k(k+1)}{2}}} + \cdots\\
      & <
      \sum_{k=1}^\infty \frac{1}{n_{\frac{(k-1)k}{2}+1} \ldots n_{\frac{k(k+1)}{2}}}
      %\frac{1}{n_1}+ \frac{1}{n_2^2}+ \frac{1}{n_3^3}+ \ldots 
      \le \sum_{k=1}^\infty \frac{1}{2^k} = \frac{1}{1-\frac{1}{2}}-1 =1.
    \end{align*}
    Therefore, the complement of $C$ is the closed set of positive measure and empty interior we were searching for.\qedhere

    % \textcolor{blue}{$$ \displaystyle\sum_{k=1}^\infty \frac{\displaystyle \prod_{i=1}^k \left( \left(\prod_{j=\frac{(i-1)i}{2}+1}^{\frac{i(i+1)}{2}} n_j\right) -1\right) }{\displaystyle \prod_{i=1}^{\frac{k(k+1)}{2}} n_i } $$ this is very ugly, so I would rather write as in the magenta colored part...}
\end{proof}

\end{document}